\DeclareMathOperator{\Div}{Div}
\DeclareMathOperator{\dee}{d}
\DeclareMathOperator{\Pic}{Pic}
\DeclareMathOperator{\Hom}{Hom}
\DeclareMathOperator{\id}{id}
\DeclareMathOperator{\im}{im}
\DeclareMathOperator{\sheafHom}{\mathscr{H}\text{\kern -3pt {\calligra\large om}}\,}
\DeclareMathOperator{\sgn}{sgn}
\newcommand{\Z}{\mathbb Z}
\newcommand{\R}{\mathbb R}
\newcommand{\OO}{\mathcal O}
\newcommand{\vphi}{\upvarphi}
\newcommand{\inv}{^{-1}}
\newcommand{\bs}{\backslash}
\newcommand{\Ab}{\text{\textbf{Ab}}}
\newcommand{\Cyc}{\text{\textbf{Cyc}}}
\newcommand{\orCyc}{\text{\textbf{OrCyc}}}
\newcommand{\op}{^{\text{op}}}
\newcommand{\mc}{\mathcal}
\newcommand{\mb}{\mathbb}
\theoremstyle{plain}
\newtheorem{theorem}{Theorem}[section]
\newtheorem{lemma}[theorem]{Lemma}
\newtheorem{prop}[theorem]{Proposition}
\newtheorem{cor}[theorem]{Corollary}
\newtheorem{conjecture}[theorem]{Conjecture}
\theoremstyle{definition}
\newtheorem{defn}[theorem]{Definition}
\theoremstyle{remark}
\newtheorem{example}[theorem]{Example}
\newtheorem{remark}[theorem]{Remark}
\newtheorem*{theorem*}{Theorem}
\theoremstyle{question}
\newtheorem*{question}{Question}
\title{A Torelli Theorem for Graph Isomorphisms}
\author{Sarah Griffith}
\begin{document}
\maketitle

\begin{abstract}
    It is known that isomorphisms of graph Jacobians induce cyclic bijections on the associated graphs. We characterize when such cyclic bijections can be strengthened to graph isomorphisms, in terms of an easily computed divisor. The result refines tools used in algebraic geometry to examine the fibers of the compactified Torelli map.
\end{abstract}

\section{Overview}
There is a known Torelli theorem for graphs, due to Caporaso-Viviani and Su-Wagner \cite{caporaso_torelli_2010} \cite{su_lattice_2010}, showing that under mild hypotheses, isomorphisms of Jacobians of graphs can be noncanonically lifted to isomorphisms of their associated graphic matroids. This result did not use any analogue of the theta divisor. A discrete analogue exists, and one might hope that an isomorphism of graph Jacobians preserving it has special properties. In this paper we show that preservation of discrete theta divisors yields a procedure for lifting isomorphisms of graph Jacobians not only to the matroid level but to the level of graph isomorphisms. Our main result is \cref{geometric rigidity} and its corollary, which describe this lifting. These are restated in a form explicitly connected to the prior Torelli result in \cref{relation to prior torelli}. In algebraic geometry, our result offers a new tool for examining the fibers of the compactified Torelli map from $\overline{M}_g$ to $\overline{A}_g$.

In \cref{history}, we discuss the literature informing this result, and the many reasons the objects under examination are considered of interest. \Cref{technical background} begins the dive into the definitions and constructions we will use. In \cref{oriented functoriality} we describe how certain constructions in the background of many papers can be made functorial. Stated in this context, Whitney's $2$-isomorphism theorem raises a question which may be of interest to combinatorialists.

\Cref{preservation of the theta divisor} is occupied with proving the main theorem. We begin in \cref{preliminary constructions} by relating known graph invariants in a new way, and introduce \textit{rigidity}, a term for when these relationships are in the categorical sense natural. The key result in this subsection is that rigidity can be measured in terms of graph divisors. \Cref{Preservation of the First Abel-Jacobi Image} proves a technical lemma using standard arguments. In \cref{proof of rigidity equivalences} we show that rigidity can also be characterized in terms of images of Abel-Jacobi maps. In particular, rigidity is equivalent to preservation of the discrete theta divisor. In \cref{Geometric interpretation and the main result}, we interpret rigidity in terms of the structure of graphs, thereby proving the main result, \cref{geometric rigidity}. While we refer the reader to \cref{geometric rigidity} itself for a more general and precise claim, as an immediate consequence of our results we have the following.

\begin{theorem*} Let $G$ and $H$ be 2-edge connected graphs of genus at least two. Then their Jacobians admit a discrete theta divisor preserving isomorphism (equivalently, a rigid isomorphism) if and only if $G$ and $H$ are isomorphic.
\end{theorem*}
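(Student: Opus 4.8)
The plan is to prove the two implications of the biconditional separately, and in both directions the substantive work has already been done in the preceding sections; what remains is to assemble \cref{geometric rigidity}, the prior Torelli theorem of \cite{caporaso_torelli_2010,su_lattice_2010}, and the identification of rigidity with preservation of the discrete theta divisor.

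For the easy direction, suppose $G \cong H$ via a graph isomorphism $\phi$. Functoriality of the Jacobian construction (\cref{oriented functoriality}) yields an induced isomorphism $\phi_* \colon \Pic^0(G) \to \Pic^0(H)$. Since the discrete theta divisor is assembled from canonical combinatorial data --- images of Abel--Jacobi maps --- and $\phi_*$ is natural, it carries the theta divisor of $G$ onto that of $H$. By the equivalence of theta preservation with rigidity (\cref{proof of rigidity equivalences}), $\phi_*$ is then the desired rigid isomorphism.

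For the converse, suppose a rigid isomorphism $\Phi \colon \Pic^0(G) \to \Pic^0(H)$ is given. First I would invoke the prior Torelli theorem: because $G$ and $H$ are $2$-edge connected of genus at least two, they satisfy the mild hypotheses under which \cite{caporaso_torelli_2010,su_lattice_2010} lift $\Phi$ to an isomorphism of graphic matroids, equivalently to a cyclic bijection of edge sets. This alone does not give $G \cong H$: by Whitney's $2$-isomorphism theorem two nonisomorphic graphs can share a graphic matroid, the discrepancy being accounted for by twisting operations at $2$-separations. The crux is therefore to promote the cyclic bijection underlying $\Phi$ to a genuine graph isomorphism, and this is precisely the content of \cref{geometric rigidity}, which reinterprets rigidity as a structural condition on the graphs forcing the cyclic bijection to respect vertex adjacency. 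Applying it to $\Phi$ produces the isomorphism $G \cong H$.

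The main obstacle is the step encapsulated by \cref{geometric rigidity}: one must show that the extra data of the theta divisor, beyond the bare matroid, detects and rules out nontrivial Whitney twists. The mechanism I expect is that a nontrivial twist alters the relevant Abel--Jacobi image, hence the theta divisor, so a theta-preserving isomorphism cannot be induced by one. Here the hypotheses earn their place: $2$-edge connectivity removes bridges so that the matroid faithfully records the cycle structure, while genus at least two guarantees the theta divisor is nondegenerate enough to carry this twisting information, the implication failing in low genus where the divisor degenerates. Once \cref{geometric rigidity} is available the corollary is immediate, which is why it can be stated as such.
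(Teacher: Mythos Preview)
Your proposal is correct and follows essentially the same route as the paper: the forward direction is handled by noting that an edge isomorphism preserves Abel--Jacobi images (hence is rigid, hence theta-preserving), and the converse assembles the prior Torelli theorem with \cref{extended torelli} and \cref{geometric rigidity} exactly as in \cref{relation to prior torelli}. One small point worth tightening: rigidity is a property of morphisms in $\orCyc$, not of abstract Jacobian isomorphisms, so you should say that the lifted cyclic bijection $\vphi$ satisfies $\vphi_* = \Phi$ and is therefore rigid by $(3)\Rightarrow(1)$ of \cref{extended torelli}, then apply \cref{geometric rigidity} to $\vphi$ rather than to $\Phi$; also note that the machinery of $\orCyc$ formally requires $2$-connectedness in addition to $2$-edge connectedness, so a word on that hypothesis (or a reduction) is in order.
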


In \cref{applications examples and relations to other results} we provide examples and applications. Our examples lie in \cref{examples}. We circle back to Whitney's $2$-isomorphism theorem in \cref{relation to whitney 2 iso theorem} and explicitly strengthen the prior graph Torelli theorem in \cref{relation to cap torelli}. In \cref{as matroid lifting} we briefly discuss how our result gives a procedure for lifting isomorphisms of matroids to isomorphisms of graphs. Finally, in \cref{fibers of the compactified Torelli map} we note that the objects we are considering are pertinent to Caporaso and Viviani's work in \cite{caporaso_torelli_stable_2010} examining the fibers of the compactified Torelli map.

We conclude in \cref{software} by directing the reader to the author's development of relevant algorithms and data structures in SageMath. The author has implemented much of the computational content of this paper, as well as several algorithms prominent in \cite{backman_riemann-roch_2017}.
    
\section{History and Motivation} \label{history}

Every connected finite graph $G$ has an associated $\mathbb Z$-graded finite commutative group, which we will call $\Pic_{\dee}(G)$. Its degree zero subgroup $\Pic^0_{\dee}(G)$ has been repeatedly rediscovered or reframed by researchers in a wide variety of fields. Dynamicists considered it and related generalizations while investigating a phenomenon known as self-organized criticality. In this guise it came to be known as the \textit{Abelian sandpile model}. Combinatorialists considered it in the context of \textit{chip firing games} or the \textit{dollar game} or \textit{critical groups}. It turns out that $\Pic^0_{\dee}(G)$ knows surprising and interesting things about $G$ (for example, its order is the number of spanning trees of $G$).

Graph theory often arises in algebraic geometry when one takes degenerate limits of families of varieties: which and how many times irreducible components of the degeneration intersect each other can be encoded as a graph. Graph invariants can then be used to analyze possible degenerations. The Picard group of a graph famously arose in arithmetic geometry, through work of Raynaud and of Oda and Seshadri, who proved that the special fibers of N\'eron models of certain Jacobians have components indexed by $\Pic^0_{\dee}(G)$ for $G$ a related graph \cite{raynaud_specialisation_1970} \cite{oda_compactifications_1979}. See \cite{caporaso_recursive_2018} for more context. Connections between $\Pic_{\dee}(G)$ and algebraic geometry gained further attention due to work by Baker and Norine, who exposed surprising parallels between graphs and Riemann surfaces (for example, the existence of a Riemann-Roch theorem \cite{baker_riemann-roch_2007}).

On the other hand, an invariant which we call the graph Jacobian $J(G)$ arose in relation to the classical combinatorial theory of cuts and flows on graphs. A Riemann surface of genus $g$ has a Jacobian variety, a complex torus of complex dimension $g$ which can be interpreted as parameterizing holomorphic forms up to relations arising from the geometry of the surface. The graph Jacobian is a real flat torus which admits a similar interpretation, as described in \cite{bacher_lattice_1997} and \cite{baker_harmonic_2007}. As it turns out, $\Pic^0_{\dee}(G)$ embeds canonically into $J(G)$.

The surface and graph parallel is further strengthened by the existence of \textit{Abel-Jacobi maps} in both cases. Saving precise statements for later, given a base point (base vertex), the first Abel-Jacobi map tells us how to map the points (vertices) of our surface (graph) into the Jacobian (into $\Pic^0_{\dee}(G)$). Except in trivial cases, this first map is injective. The $k$-th Abel-Jacobi map takes $k$ points (vertices), maps them individually using the first Abel-Jacobi map, then sums them using the group operation. In the surface case, for $g \geq 2$ the $g-1$-th map has image a hypersurface. In the language of algebraic geometry this means that the image is a divisor (relative to the Jacobian). It is called the \textit{theta divisor}. In the graph case, we will be concerned with an analogue we call the discrete theta divisor, which is simply a subset of $\Pic^0_{\dee}(G)$, and therefore $J(G)$.

While the Jacobian is not a complete invariant for Riemann surfaces, a critical result called the Torelli theorem says, roughly, that a Riemann surface can be characterized up to isomorphism by its Jacobian together with its theta divisor. In its most classical form it concerns periods and intersection theory; in modern language it is about injectivity of a map between moduli spaces. The following form, which we have weakened for simplicity, is one which closely connects to our results.

\begin{theorem}
    (Weak classical Torelli theorem) Let $X$ and $X'$ be Riemann surfaces of genus at least two, with specified base points. Let $(J(X), \theta)$ and $(J(X'), \theta')$ be Jacobians together with the theta divisors for $X$ and $X'$. If $J(X)$ and $J(X')$ are isomorphic in a way which identifies $\theta$ and $\theta'$, then $X \cong X'$.
\end{theorem}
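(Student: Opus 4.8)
The plan is to reconstruct the canonical model of $X$ from the polarized torus $(J(X),\theta)$ and then appeal to the fact that a Riemann surface of genus at least two is recovered from its canonical model. First I would fix the canonical identifications attached to $J(X)$: writing $V = H^0(X,\Omega^1_X)$, the translation-invariant tangent space to $J(X)$ is canonically $V^\ast$, so we obtain an intrinsic projective space $\PP(V) \cong \PP^{g-1}$ together with its dual $\PP(V^\ast)$, the target of the canonical map $\phi_K \colon X \to \PP(V)$. After a translation, $\theta$ is identified with $W_{g-1} = \{\,[D] : D \geq 0,\ \degr D = g-1\,\} \subset \Pic^{g-1}(X) \cong J(X)$, the image of the $(g-1)$-st Abel--Jacobi map. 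An isomorphism of tori carrying $\theta$ to $\theta'$ is, up to translation, a group isomorphism, and its derivative is a linear isomorphism $V^\ast \to (V')^\ast$ compatible with all of this data; it therefore suffices to recover $X$ from $(J(X),\theta)$ alone.

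The main tool is the Gauss map of the theta divisor. By Riemann's singularity theorem the multiplicity of $\theta$ at $[D]$ equals $h^0(D)$, so the smooth locus $\theta^{\mathrm{sm}}$ is exactly $\{\,[p_1 + \cdots + p_{g-1}] : h^0 = 1\,\}$, and at such a point the translated tangent hyperplane determines a point of $\PP(V^\ast)$. This gives the Gauss map $\gamma \colon \theta^{\mathrm{sm}} \to \PP(V^\ast)$. The computation I would carry out, via geometric Riemann--Roch and the description of $T_{[D]}W_{g-1}$, is that $\gamma([p_1 + \cdots + p_{g-1}])$ is precisely the hyperplane spanned by the canonical images $\phi_K(p_1), \ldots, \phi_K(p_{g-1})$ (which do span a hyperplane exactly because $h^0(D) = 1$). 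Thus $\gamma$ encodes the $(g-1)$-secant hyperplanes of the canonical curve.

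From here I would recover $X$ following Andreotti. In the non-hyperelliptic case $\phi_K$ is an embedding, and an analysis of where $\gamma$ ramifies identifies its branch divisor with the dual variety $\phi_K(X)^\vee \subset \PP(V^\ast)$; by biduality $\bigl(\phi_K(X)^\vee\bigr)^\vee = \phi_K(X)$, so the canonical curve, and hence $X$, is determined. The hyperelliptic case is exceptional and handled separately: for $g = 2$ one has directly $\theta = W_1 \cong X$, while for $g \geq 3$ hyperelliptic the canonical map is two-to-one onto a rational normal curve, so $\gamma$ alone recovers only this $\PP^1$, and one must additionally extract the $2g+2$ branch points of the double cover from the finer singularity data of $\theta$ (through the Riemann--Kempf description of its tangent cones). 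Transporting the reconstruction across the given isomorphism of pairs then yields $X \cong X'$.

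The hard part is the tangent-space computation identifying $\gamma$ with the secant-hyperplane geometry of the canonical curve, together with the verification that the branch divisor of $\gamma$ is exactly the dual variety; this is where the theory of special divisors genuinely enters. A secondary obstacle is bookkeeping the ambiguities in $\theta$ --- it is defined only up to translation and is preserved by the involution $x \mapsto -x$ --- though since these symmetries act linearly on $\PP(V)$ they do not affect the recovery of $X$ up to isomorphism. Finally, the hyperelliptic case, while classical, demands its own argument precisely because the Gauss map degenerates there.
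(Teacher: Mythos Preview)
Your proposal is a faithful sketch of Andreotti's proof of the classical Torelli theorem via the Gauss map of the theta divisor, and the outline is correct. However, there is nothing to compare it against: the paper states this theorem purely as historical motivation in the background section and gives no proof whatsoever. The classical Torelli theorem is cited as a known result to frame the graph-theoretic analogue that the paper actually proves; the paper's own work concerns graphs, not Riemann surfaces.

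So your write-up is not wrong, but it is answering a question the paper does not ask. If the intent was to supply the missing classical proof, Andreotti's argument is an appropriate choice, and your identification of the two genuine difficulties---the tangent-space computation linking $\gamma$ to secant hyperplanes of the canonical curve, and the separate treatment of the hyperelliptic case---is accurate.
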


Caporaso-Viviani proved a Torelli style result for graphs in \cite{caporaso_torelli_2010}. Independently, Su-Wagner proved the same result in the more general context of regular matroids \cite{su_lattice_2010}. The graph Torelli theorem relates isomorphisms of Jacobians to isomorphisms of graphic matroids, and does not make use of any analogue of the theta divisor. This result has a key application in algebraic geometry, which we will discuss in \cref{fibers of the compactified Torelli map}. 

We would be remiss in not mentioning that much of the above can be discussed equally well in the tropical context. See \cite{baker_degeneration_2015} for a survey.

This paper largely consists of the refinement of data from discrete theta divisors of graphs. In order to hold this data, we use the \textit{generalized cycle-cocycle system} of a graph, defined in \cite{backman_riemann-roch_2017}, which consists of equivalence classes of partial orientations on the underlying graph. This invariant is explicitly intended to package data from $\Pic_{\dee}(G)$ in a more ``matroidal'' form. While it cannot describe all of $\Pic_{\dee}(G)$, it can speak fluently about divisors which lie in a certain range of degrees. These include the \textit{special divisors}, which are closely related to the discrete theta divisor. As the arguments in this paper show, the generalized cycle-cocycle system is extremely well behaved in the situation of the graph Torelli theorem, and so all our definitions are intended to load as much information into it as possible.

\section{Technical background and relations between graph invariants} \label{technical background}

In this section, we describe the relationships between various graph invariants we will need, and indicate our preferred terminology for them.

We will need to discuss both directed and undirected graphs. 

\begin{defn}
A \textit{graph} $G$ is a pair $(V(G), E(G))$ of edge and vertex sets, with each edge associated with an unordered pair of distinct adjacent vertices $\{x,y \}$. This disallows edges which go from a vertex to itself. If each pair of vertices has at most one edge between them, we say $G$ is \textit{simple}. At all times we will only consider connected graphs. The empty graph is connected. A graph is \textit{$n$-connected} (\textit{$n$-edge connected}) if removing $n-1$ vertices (edges) never disconnects the graph. It is $n$-\textit{connective} ($n$-\textit{edge connective}) if $n$ is the greatest number for which $G$ is $n$-connected ($n$-edge connected). A \textit{based} graph $(G, e)$ is a graph $G$ with a choice of a distinguished edge $e$.

An \textit{isomorphism} of graphs $\psi: G \to H$ is a pair of bijections $V(G) \to V(H)$ and $E(G) \to E(H)$, also denoted by $\psi$. We require that $\psi$ preserve all adjacency relations between edges and vertices. If $(G, e)$ and $(H, w)$ are based, we also require that $\psi$ preserve the base edge.

A \textit{path} in a graph is a sequence $e_1, \dots, e_n$ of edges such that $e_i$ and $e_{i+1}$ are adjacent for each $i$. A \textit{cycle} is a path such that $e_1$ and $e_n$ are also adjacent. The vertices which lie between each $e_i$ and $e_{i+1}$ are referred to as being visited by the path or cycle. We also consider the ends of $e_1$ and $e_n$ not visited in the subsequent and prior steps, respectively, to be visited. The order in which vertices are visited is uniquely defined except in the case of a path entirely between two vertices; this case should never be problematic. We will frequently stretch our notation by referring to the set of edges composing a cycle as a cycle. When the possibility of confusion exists we will refer to such sets as \textit{unordered cycles}. \end{defn}

It is difficult to keep track of vertices using the data of the objects we are interested in. A motif of this paper is that we prefer to track oriented edges, and specify vertices as being the targets and origins of edges. To this end, we now define directed graphs. We put hats over symbols as a purely informal method of suggesting that we consider them with orientation data: for example $\hat e$ with $e$ an edge suggests that $\hat e$ is relevantly oriented. Usually this means that its target will be at least implicitly relevant.

\begin{defn}A \textit{directed graph} is a quadruple $(V(G), E(G), o, t)$ with an origin or source function $o: E(G) \to V(G)$ and a target function $t: E(G) \to V(G)$, for which the underlying undirected graph satisfies the same conditions as before. We will call a choice of such an $o$ and $t$ an \textit{orientation} on the graph, say $\Gamma$, and denote an oriented graph $\hat G$ by $(G, \Gamma)$. We likewise write a based directed graph as $(G, \Gamma, e)$. We may omit $\Gamma$ or $e$ when they are irrelevant or clear from context. We will sometimes consider graphs in which possibly only some of the edges are oriented. We call such orientations \textit{partial orientations}. In an oriented or partially oriented graph, a vertex is a \textit{source} if it has no incoming edges.

An isomorphism of (based) directed graphs is an isomorphism of the underlying (based) graphs. No notion of compatibility in orientations is required.
\end{defn}

\begin{defn}
A path (cycle) is \textit{simple} if it does not visit any vertex twice, except for possibly beginning and ending at the same vertex. Equivalently, we cannot extract any subcycles.
\end{defn}

A simple unordered cycle has only two possible directions. When an orientation is present, we will often indicate a direction for a simple unordered cycle by specifying whether we travel with or against a particular edge.

Several invariants will require a choice of orientation in their definitions. They vary up to a canonical isomorphism based on this choice, a fact that will be of significance in \cref{preliminary constructions}. We begin with invariants described in \cite{bacher_lattice_1997} (for an expository discussion, see \cite{bollobas_modern_1998}). Consider the euclidean vector space of $1$-chains $C_1(G, \R)$ with orthonormal basis the edges of $G$. Its dual space $C^1(G, \R)$, endowed with the induced inner product, is the space of \textit{$1$-cochains} of $G$, which we always think of as having the canonical orthonormal dual basis given by $\{e^* \}_{e \in E(G)}$. 

We denote the span of the algebraic cycles of $\hat G$ by $H^1(G, \R)$, and the projection operator onto $H^1(G, \R)$ will always be denoted by $\pi$. The integer lattice $C^1(G, \Z)$ in $C^1(G, \R)$ restricts to $H^1(G, \R)$ to produce a lattice $H^1(G, \Z)$, which is itself generated by algebraic cycles. There is always at least one basis of $H^1(G, \Z)$ given by simple algebraic cycles. The orthogonal complement of $H^1(G, \R)$ is spanned by the consistently oriented cuts of $G$.

\begin{defn}
	Given a path (cycle) $P = e_1, \dots, e_n$ in $\hat G$, its associated \textit{algebraic path (cycle)}, denoted $\alpha(P)$, is the formal sum $\sum_i s_i e_i^* \in C^1(G, \Z)$, where $s_i = 1$ if $P$ crosses $e_i$ in the direction of its orientation, and $s_i = -1$ otherwise. When $C$ is a cycle $\alpha(C)$ is always in $H^1(G, \Z)$. We say that $P$ is an \textit{oriented path (cycle)} if all the $s_i$ are $1$. The traversal of a path (cycle) $P$ backwards is indicated by $P\op$. We therefore have $\alpha(P\op) = -\alpha(P)$.
\end{defn} 

The \textit{Jacobian} $J(G) := H^1(G, \R)/H^1(G, \Z)$ is a flat metric torus. The Jacobian has a dual, the \textit{Albanese} $A(G)$, which can also be defined through homological means by quotienting the space $H_1(G, \R) = H^1(G, \R)^* \subset C_1(G, \R)$. There is a canonical finite subgroup of $J(G)$, which we call the \textit{discrete Jacobian}, denoted by $J_{\dee}(G)$. This is the discriminant group of the lattice $H^1(G, \Z)$. In words, take the dual lattice $\Hom_\Ab(H^1(G, \Z), \Z)$, which via the inner product we think of as itself a lattice in $H^1(G, \R)$, and quotient it by $H^1(G, \Z)$. When $a, b \in H^1(G, \R)$, we write $a \equiv b$ to indicate equivalence modulo $H^1(G, \Z)$.

In \cite{baker_riemann-roch_2007} Baker and Norine recast another closely related graph invariant, one which has appeared in many guises, including in \cite{bacher_lattice_1997}. Consider the free commutative group $\Div G$ on the vertices of $G$, referred to as the group of \textit{divisors} on $G$. This group has a $\Z$ grading into degrees $\Div^n G$, given by summing coefficients. The graph Laplacian matrix $\Delta$ of $G$ is the difference of the adjacency matrix and the degree matrix. It can be regarded as an endomorphism of $\Div G$, mapping into $\Div^0 G$. Taking $\Div G / \im \Delta$ produces a commutative group $\Pic_{\dee}(G)$, the \textit{discrete Picard group} of $G$, which inherits a $\Z$-grading into parts $\Pic_{\dee}^n(G)$. Each $\Pic_{\dee}^n(G)$ is finite.  Two divisors $D$ and $D'$ which are equivalent modulo $\im \Delta$ are referred to as \textit{linearly equivalent}, and we write $D \sim D'$. A divisor $D$ with only non-negative coefficients is referred to as \textit{effective}. When we write an inequality between divisors, we mean it to hold coefficient by coefficient. For example, $D \geq 0$ indicates that $D$ is effective. It is sometimes convenient to denote the $v$ coefficient of $D$ by $D(v)$. Finally, we denote by $|D|$ the set of effective divisors linearly equivalent to $D$.

It turns out that there is a canonical isomorphism $\Pic_{\dee}^0(G) \cong J_{\dee}(G)$. We need to describe how it is produced.

\begin{defn} \label{path from basepoint notation}
For each $w \in E(\hat G)$, define $h_w \in H^1(G, \R)$ to be $\pi w^* \in H^1(G, \R)$. If $(\hat G, \hat e)$ is based, for any vertex $v$ we define $P_v \in J_{\dee}(G)$ to be $\pi \alpha(P) + H^1(G, \Z)$, where $P$ is a path from $t(\hat e)$ to $v$ (recall that $\pi$ is the projection operator $C^1(G, \R) \to H^1(G, \R)$). In words, $h_w$ is the projection of the indicator function for $w$ to $H^1(G, \R)$, while $P_v$ is obtained by projecting any algebraic path from $t(\hat e)$ to $v$ to $H^1(G, \R)$, then passing to $J_{\dee}(G)$. 
\end{defn}

We now define a map $\Div^0(G) \to J_{\dee}(G)$ by extending $t(w) - o(w) \mapsto h_w + H^1(G, \Z)$ linearly. One can demonstrate that this is well defined, surjective, and the kernel is exactly $\im \Delta$, yielding the desired isomorphism $\Pic_{\dee}^0(G) \cong J_{\dee}(G)$.

Let $V(G)^{(n)}$ denote the $n$-th symmetric product of $V(G)$. We have the additional data, depending on $\hat e$, of \textit{Abel-Jacobi maps} $S_{\hat e}^n: V(G)^{(n)} \to \Pic_{\dee}^0(G)$, given by $v_1 + v_2 + \dots + v_n \mapsto v_1 + v_2 + \dots + v_n - nt(\hat e)$. After composing with our isomorphism $\Pic_{\dee}^0(G) \cong J_{\dee}(G)$, this becomes $\sum_{i=1}^n v_i \mapsto \sum P_{v_i}$. Stretching our notation, we also denote this map by $S_{\hat e}^n$. 

Let $g(G)$ denote the \textit{genus} of $G$, which we define to be $|E(G)| - |V(G)| + 1$ (graph theorists sometimes call this the cyclotomic number of $G$, reserving ``genus'' for a different invariant).

\begin{defn}
We refer to $\im S_{\hat e}^{g(G)-1}$, in either $\Pic^{0}_{\dee}(G)$ or $J(G)$, as the \textit{discrete theta divisor} $\theta_{\hat e}$ of $(G, \hat e)$. We refer to the divisors of $\theta_{\hat e} + (g-1)t(\hat e) \subset \Pic^{g-1}_{\dee} (G)$ as the \textit{special divisors}, the set of which is denoted $\mc S(G)$. This is independent of the base edge and consists exactly of the divisors linearly equivalent to an effective divisor. The rest of the divisors in $\Pic^{g-1}_{\dee} (G)$ are referred to as \textit{nonspecial}. We denote the set of these by $\mc N(G)$. \end{defn}

\begin{remark}
    In the Riemann surface context, the theta divisor has essentially the same definition, but there it is a hypersurface within the Jacobian. In the language of algebraic geometry, such a hypersurface is a divisor, whence the divisor part of ``theta divisor.'' The reader should not be mislead into thinking a theta divisor is a divisor on a graph or Riemann surface. In particular, the discrete theta divisor carries no more structure than that of a (finite) subset of the Jacobian.
\end{remark}

\begin{defn} \label{matroid}
    Each graph $G$ has an associated \textit{graphic matroid} $M(G)$, which is the edge set of $G$ together with the set of all simple unordered cycles in $G$.
    
    An isomorphism between graphic matroids is a bijection between the edge sets which induces bijections between the associated sets of simple cycles. When we write $(M(G), e)$ for $e$ an edge in $G$, we mean $M(G)$ to have base element $e$. Isomorphisms between graphic matroids with base elements will always be required to respect the base elements.
\end{defn}

Independently, both Caporaso-Viviani and Su-Wagner related $A(G)$ to $M(G)$ by proving a Torelli theorem for graphs, from combinatorial and algebraic geometry points of view, respectively
\cite{su_lattice_2010}, \cite{caporaso_torelli_2010}. For the moment we can state the results as follows. 

\begin{theorem} \label{capTorelli}
Let $G, H$ be $2$-edge connected graphs. Then $M(G) \cong M(H)$ if and only if $A(G) \cong A(H)$.
\end{theorem}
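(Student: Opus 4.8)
The plan is to pass from flat tori to Euclidean lattices and then reconstruct the matroid from lattice data. The starting observation is that $A(G) = H_1(G,\R)/H_1(G,\Z)$ is a flat metric torus, and two flat tori are isomorphic as metric groups exactly when their period lattices are isometric --- that is, when there is an isomorphism of abelian groups $H_1(G,\Z) \to H_1(H,\Z)$ preserving the Euclidean inner product induced from the orthonormal edge basis of $C_1(\cdot,\R)$. So the theorem is equivalent to the purely lattice-theoretic assertion that the cycle lattices (lattices of integer flows) of $G$ and $H$ are isometric if and only if $M(G) \cong M(H)$. Working on the Albanese side is the natural choice here, because the minimal-support nonzero vectors of the flow lattice are precisely the signed simple cycles, whose supports are the circuits of $M(G)$; the matroid is therefore latent in the lattice, and the whole problem is to extract it in an isometry-invariant way.

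The easy direction, $M(G) \cong M(H) \Rightarrow A(G) \cong A(H)$, amounts to the remark that the isometry class of the cycle lattice depends only on the matroid. Concretely, a matroid isomorphism $\phi\colon E(G) \to E(H)$ carrying simple unordered cycles to simple unordered cycles can be promoted to a signed bijection $\tilde\phi\colon e \mapsto \varepsilon(e)\,\phi(e)$ of the standard bases of $C_1(G,\R)$ and $C_1(H,\R)$. A signed permutation of coordinates is automatically orthogonal, so one only needs to choose the signs $\varepsilon(e)\in\{0,\pm 1\}$ (in fact $\{\pm 1\}$) so that the cycle space maps onto the cycle space; since graphic matroids are regular, reorientations of edges change the signed-incidence (totally unimodular) representation only by such a signed permutation, so $\tilde\phi$ restricts to the required lattice isometry.

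The hard direction, $A(G) \cong A(H) \Rightarrow M(G) \cong M(H)$, is the genuine Torelli content, and here one cannot assume the given lattice isometry is a signed permutation --- it knows only the quadratic form, not the privileged coordinate directions supplied by the edges. The route I would take is to attach to the cycle lattice $L = H_1(G,\Z)$ an intrinsic combinatorial invariant and identify it with the matroid. The natural vehicle is the Voronoi/Delaunay decomposition of $H_1(G,\R)$ determined by $L$: its combinatorial type is an isometry invariant, hence is transported across any isomorphism of tori, and for the cycle lattice of a graph this decomposition admits a description in terms of the graph's cuts and cycles from which the circuits of $M(G)$ can be read off. Matching Delaunay types then yields a bijection of edges carrying circuits to circuits, i.e. a matroid isomorphism. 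The $2$-edge-connectivity hypothesis enters precisely to make this reconstruction faithful: a bridge is a coloop that lies in no cycle, so it is invisible to the flow lattice $L$ and to $A(G)$, and excluding bridges guarantees that $A(G)$ sees every edge and that the recovered matroid is all of $M(G)$.

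The main obstacle is exactly this reconstruction step. An abstract lattice isometry is a priori far weaker than a matroid isomorphism, so everything hinges on producing an isometry-invariant description of the circuit structure --- through the Delaunay cells, through minimal and conformally indecomposable vectors, or through the structure theory of regular matroids --- and then on verifying that this description genuinely separates non-isomorphic matroids rather than merely non-isometric lattices. I expect the most delicate point to be checking that the combinatorial data recovered from $L$ really satisfies the circuit exchange axioms and reproduces the graphic matroid, rather than some other regular matroid that happens to share the same Euclidean lattice.
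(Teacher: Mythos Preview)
The paper does not prove this theorem; it is stated as a known result and attributed to Caporaso--Viviani and Su--Wagner with citations. So there is no ``paper's own proof'' to compare against beyond those references.

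Your outline is in fact close to the Caporaso--Viviani argument: they too reduce to an isometry of integral lattices (working with the Albanese lattice $H_1(G,\Z)$), associate to it the Delaunay decomposition, and show that the combinatorial type of this decomposition recovers the cographic matroid, hence the graphic matroid for $2$-edge connected graphs. Su--Wagner proceed more abstractly within the class of regular matroids. Your easy direction is correct and essentially the same as what appears in these sources (and is implicit in this paper's Theorem~4.1, which manufactures the signed permutation explicitly).

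That said, what you have written is a plan, not a proof, and you have correctly identified the load-bearing step: showing that the Delaunay combinatorics actually determine the circuits of $M(G)$. This is not a formality. One must describe the maximal Delaunay cells concretely (they are zonotopes indexed by acyclic orientations, with facets corresponding to edges modulo series equivalence), check that the face lattice is an isometry invariant, and then argue that the incidence data of the top cells reconstructs the circuit set. The $2$-edge connectivity hypothesis is used exactly as you say, to ensure no edge is a bridge and hence invisible to $H_1$. None of this is routine, and the details occupy several pages in the cited papers; your sketch does not yet contain the mechanism by which circuits are read off from Delaunay cells, so as written it is an accurate roadmap but not a self-contained argument.
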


Since $A(G) \cong A(H)$ if and only if $J(G) \cong J(H)$, we obtain the same result for Jacobians. In \cref{relation to cap torelli} we will go into more detail about how the indicated isomorphisms can be produced; this will be brief and the only other mention of $A(G)$.

There is one more invariant we need to understand, based on partial orientations of graphs. Several of the following definitions and results can be found in \cite{backman_riemann-roch_2017}. They are in different but easily translated notation in that source.

\begin{defn}
Let $G$ be a graph and $\mb O^k(G)$ be the set of partial orientations on $k + |V(G)|$ edges, when this is nonempty. When $k = g-1$, this is the set of full orientations and we just write $\mathbb O(G)$. We let $c: \mathbb O^k(G) \to \Pic^k_{\dee}(G)$ be the map which takes a partial orientation $U$ to the divisor $\sum_{e \in E(G)} t(e) - \sum_{v \in V(G)} v$, where each $t(e)$ is the target of an edge oriented according to $U$. We call this the \textit{Chern class map} and call $c(U)$ the \textit{Chern class} of $U$.
\end{defn}

It is natural to ask which divisors are given by applying the Chern class map to partial orientations. Such divisors are called \textit{partially orientable}. This question has an answer which is uniform within each Chern class.

\begin{theorem} \textit{(Backman)}
Let $D$ be a divisor of degree at most $g-1$. Then $D$ is partially orientable if and only if $|D + \sum_{v \in V(G)} v| \neq \emptyset$.
\end{theorem}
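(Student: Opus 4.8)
Write $N := \sum_{v \in V(G)} v$ and $k := \deg D \le g-1$, and recall that a partial orientation $U$ of an edge set $S$ has $c(U) = \sum_{e \in S} t(e) - N$. Thus $D$ is partially orientable precisely when some $U$ satisfies $c(U) \sim D$, equivalently $\sum_{e \in S} t(e) \sim D + N$ with $\sum_{e \in S} t(e)$ effective. The forward implication is then immediate: if $c(U) \sim D$ then $D + N \sim \sum_{e \in S} t(e) \ge 0$, so $|D + N| \neq \emptyset$. The content of the theorem is the converse, which I would handle by realizing an effective representative of $D+N$ as a target-sum.

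So suppose $|D+N| \neq \emptyset$ and fix an effective $E \sim D + N$. Since a partial orientation lies in $\mb O^k(G)$ exactly when it orients $k + |V(G)|$ edges, and $\deg E = k + |V(G)| \le (g-1) + |V(G)| = |E(G)|$, it suffices to produce an effective divisor $E' \sim D+N$ that arises as the target-sum $\sum_{e \in S} t(e)$ of an orientation of some subset $S \subseteq E(G)$ with $|S| = \deg E'$. This is a degree-constrained orientation problem: one must select $\deg E'$ edges and orient each so that exactly $E'(v)$ of them point at $v$. I would model it as a bipartite assignment of graph edges to their incident endpoints (equivalently a flow with a source feeding demand $E'(v)$ into each vertex $v$ and each edge supplying one unit to one endpoint), and apply Hall's theorem / max-flow--min-cut. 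For a \emph{fixed} $E'$ such an orientation exists if and only if the cut condition $\sum_{v \in W} E'(v) \le \#\{\text{edges of } G \text{ incident to } W\}$ holds for every $W \subseteq V(G)$.

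The crux is that a given representative $E$ may violate this cut condition for some $W$, and the remedy is to move within the linear-equivalence class of $D+N$ by chip-firing before re-applying the flow argument. Firing an overloaded set $W$ transfers one chip across each edge leaving $W$, strictly decreasing the excess $\sum_{v \in W} E(v)$, and the hypothesis $|D+N| \neq \emptyset$ is exactly what prevents the process from terminating at a still-infeasible divisor; concretely, passing to the $q$-reduced representative and running Dhar's burning algorithm produces a representative meeting every cut inequality. The main obstacle I anticipate is precisely this termination-and-reducedness step: one must exhibit a monovariant (for instance the total overflow summed over violated sets, or the energy of the associated configuration) that strictly improves under each firing, and verify that the stable outcome satisfies the Hakimi-type cut condition. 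Once such a feasible representative $E'$ is in hand, the assignment furnished by the max-flow step orients exactly $\deg E' = k + |V(G)|$ edges and yields a partial orientation $U \in \mb O^k(G)$ with $\sum_{e} t(e) = E' \sim D + N$, hence $c(U) \sim D$, completing the equivalence.
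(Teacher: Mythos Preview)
The paper does not supply its own argument here; it simply cites Backman's paper. So there is no in-paper proof to compare against, and your sketch must stand on its own.

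Your forward implication is correct, and the overall strategy for the converse---reduce to a Hakimi-type degree-constrained orientation problem and search the linear-equivalence class of $D+N$ for a representative meeting the cut condition---is sound and in the spirit of Backman's argument. But the specific mechanism you propose for locating that representative is wrong. You assert that ``passing to the $q$-reduced representative and running Dhar's burning algorithm produces a representative meeting every cut inequality.'' It does not. Take $G = K_4$ on vertices $q,a,b,c$, so $|E(G)|=6$, $g=3$, and set $D = 5q - a - b - c$ (degree $g-1 = 2$). Then $D+N = 6q$, which is already $q$-reduced and effective, yet the cut condition fails at $W=\{q\}$ since $6 > \deg(q) = 3$. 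The theorem is still true for this $D$: firing $q$ once gives $3q+a+b+c$, which does satisfy every cut inequality and is the target-sum of a full orientation. The point is that $q$-reducedness only constrains subsets \emph{avoiding} $q$, so it cannot by itself enforce the inequality at sets containing $q$.

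The gap, then, is exactly the step you yourself flagged as the main obstacle, and your proposed resolution of it does not work. If you want to iterate by firing overloaded sets $W$ as you suggest, note that this need not preserve effectiveness either: an overloaded $W$ can contain a vertex $v$ with $E(v) <$ (edges from $v$ to $W^c$), so firing $W$ drives $E(v)$ negative. You would need both a more careful choice of which set to fire and a genuine monovariant. Backman's actual proof sidesteps this by working on the orientation side (his unfurling procedure) rather than purely on divisors; if you prefer to stay on the divisor side, the argument requires more than what you have written.
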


\begin{proof}
    \cite{backman_riemann-roch_2017}.
\end{proof}

We can put this together with the following well known consequence of Riemann-Roch.

\begin{lemma} \textit{(Baker-Norine)}
Let $D$ be a divisor of degree at least $g(G)$. Then $|D| \neq \emptyset$.
\end{lemma}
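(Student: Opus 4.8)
The plan is to derive this as an immediate corollary of the Baker--Norine Riemann--Roch theorem for graphs. Recall that this theorem equips divisors with a rank function $r$, where $r(D) = -1$ exactly when $|D| = \emptyset$, and otherwise $r(D)$ is the largest integer $k$ such that $|D - E| \neq \emptyset$ for every effective divisor $E$ of degree $k$. It also singles out the \emph{canonical divisor} $K = \sum_{v \in V(G)} (\deg v - 2)\, v$, where $\deg v$ denotes the valence of $v$; a short computation using $g = |E(G)| - |V(G)| + 1$ shows that $\deg K = 2g - 2$. With these in hand, the Riemann--Roch identity reads
\[
r(D) - r(K - D) = \deg D - g + 1.
\]

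I would then record the trivial but crucial fact that the rank of any divisor is at least $-1$, since $r$ takes values in $\{-1, 0, 1, \dots\}$; in particular $r(K - D) \geq -1$. Substituting this into the identity gives
\[
r(D) = \deg D - g + 1 + r(K - D) \geq \deg D - g,
\]
so the hypothesis $\deg D \geq g$ forces $r(D) \geq 0$. By the definition of the rank function, $r(D) \geq 0$ holds precisely when $|D| \neq \emptyset$, which is exactly the desired conclusion.

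The proof has no genuine obstacle, as the substantive work is entirely contained in the Riemann--Roch theorem, which we invoke as a black box. The only point requiring care is the bookkeeping around the rank function: one must respect the sign convention whereby $r$ detects emptiness of $|D|$ through the value $-1$, and note that the correction term $r(K-D)$ can be no more negative than this. Since no hypothesis on $K - D$ is used, the argument applies uniformly to every divisor of degree at least $g$.
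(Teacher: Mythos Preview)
Your argument is correct and is exactly the standard derivation: the paper itself simply cites Baker--Norine, and the content of that citation is precisely the Riemann--Roch inequality $r(D) \geq \deg D - g$ that you spell out. There is nothing to add.
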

\begin{proof}
\cite{baker_riemann-roch_2007}.
\end{proof}

\begin{cor} \label{partially orientable}
Let $D$ be a divisor of degree at least $g(G) - |V(G)|$. Then $D$ is partially orientable.
\end{cor}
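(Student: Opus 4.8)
The plan is to chain together the two cited results: Backman's characterization of partially orientable divisors and the Baker--Norine consequence of Riemann--Roch. The statement to prove is that any divisor $D$ of degree at least $g(G) - |V(G)|$ is partially orientable. By Backman's theorem, since $\deg D \leq g-1$ is required there, I would first want to reduce to divisors in that range; but the essential observation is that partial orientability of $D$ is governed by whether $|D + \sum_{v \in V(G)} v| \neq \emptyset$, so the task becomes verifying nonemptiness of this linear system.

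First I would compute the degree of the shifted divisor. We have $\deg\left(D + \sum_{v \in V(G)} v\right) = \deg D + |V(G)|$. Under the hypothesis $\deg D \geq g(G) - |V(G)|$, this gives $\deg\left(D + \sum_{v \in V(G)} v\right) \geq g(G)$. Now the Baker--Norine lemma applies directly: any divisor of degree at least $g(G)$ has nonempty linear system, so $|D + \sum_{v \in V(G)} v| \neq \emptyset$. By Backman's theorem this is exactly the condition for $D$ to be partially orientable.

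The one genuine subtlety is the degree-range hypothesis in Backman's theorem, which requires $\deg D \leq g-1$. I would handle the low-degree regime $g(G) - |V(G)| \leq \deg D \leq g-1$ by the argument above, since that is precisely where Backman's criterion is stated. For divisors of degree exceeding $g-1$ one should check whether the corollary is even intended to cover them; given that $\mathbb{O}^k(G)$ is defined for partial orientations on $k + |V(G)|$ edges and a graph has only $|E(G)| = g + |V(G)| - 1$ edges total, the meaningful range for $c$ caps out at full orientations where $k = g-1$. So the corollary is naturally read as concerning $\deg D \leq g-1$, and no separate high-degree case arises.

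The hard part, if any, is purely bookkeeping: making sure the degree shift by $|V(G)|$ lands the divisor exactly in the Baker--Norine regime and that the lower bound $g(G) - |V(G)|$ in the hypothesis is sharp for this to work. Everything else is a formal concatenation of the two quoted black-box results, so I expect no real obstacle beyond confirming the inequality $\deg D + |V(G)| \geq g(G)$ follows from the stated hypothesis and citing the two prior results in the correct order.
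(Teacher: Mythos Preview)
Your proposal is correct and follows exactly the paper's approach: the corollary is obtained by combining Backman's characterization with the Baker--Norine lemma via the degree shift $\deg(D + \sum_v v) \geq g(G)$. Your discussion of the $\deg D \leq g-1$ cap is a reasonable clarification of the intended range, though the paper leaves this implicit.
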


Quotienting by equality of Chern classes, we obtain $\overline{\mathbb O^k}(G)$ and $\overline{\mathbb O}(G)$. We also call the induced injections $c: \overline{\mathbb O^k}(G) \to \Pic^{k}_{\dee}(G)$ the \textit{Chern class maps}. 

\begin{defn}
    If $K$ is the set of partially orientable divisors of degree $k$, we denote the inverse of the Chern class map by $\OO: K \to \overline{\mb O^k}(G)$.
\end{defn}

Call partial orientations $U$ and $U'$ \textit{equivalent} if $[U] = [U']$ in $\overline{\mathbb O^k}(G)$.

\begin{theorem} \textit{(Backman)} \label{linear equivalence of orientations}
Two partial orientations are equivalent if and only if one can be transformed into the other by a sequence of the following operations. \begin{enumerate}
    \item Reverse all edges in a consistently oriented cycle.
    \item Reverse all edges in a consistently oriented cut.
    \item Take an edge $\ell$ pointing at a vertex $v$ and an unoriented edge $r$ adjacent to $v$. Then deorient $\ell$ and orient $r$ to point at $v$.
\end{enumerate}
\end{theorem}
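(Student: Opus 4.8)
The plan is to unwind the definition: $[U]=[U']$ in $\overline{\mathbb O^k}(G)$ means exactly $c(U)=c(U')$ in $\Pic^k_{\dee}(G)$, i.e.\ the Chern-class divisors $\sum_e t(e)-\sum_v v$ are linearly equivalent. I would dispatch the forward implication (reachable $\Rightarrow$ equivalent) by recording the effect of each move on $c(U)$. Reversing a consistently oriented cycle leaves every indegree fixed, since at each visited vertex one incoming cycle-edge becomes outgoing and one outgoing becomes incoming, so $c$ is unchanged on the nose. The pivot of operation $3$ deletes a target $v$ (from $\ell$) and creates a target $v$ (from $r$), so $\sum_e t(e)$, and hence $c$, is again unchanged exactly. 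Reversing a consistently oriented cut across a partition $(S,\overline S)$ changes $c$ by precisely $\Delta\mathbf{1}_S$: a one-line check identifies the indegree change as the cut-degree on $S$ and its negative on $\overline S$, which is exactly $\Delta\mathbf{1}_S\in\im\Delta$. All three moves therefore fix the class.

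For the converse I would separate two tasks. First suppose $c(U)=c(U')$ as divisors, so the two partial orientations have equal indegree at every vertex and (forced by the degree) the same number of oriented edges. I would use operation $3$ to arrange that $U$ and $U'$ orient the \emph{same} set of edges: operation $3$ slides a unit of ``pointing-at-$v$'' from one edge at $v$ to an unoriented edge at $v$, and a connectivity-based token-pushing argument lets one redistribute the oriented set of $U$ to that of $U'$ without disturbing any indegree. With the oriented sets equal, let $F$ be the set of edges on which $U$ and $U'$ disagree. Orienting $F$ by $U$, equality of indegrees forces indegree $=$ outdegree within $F$ at every vertex, so $F$ decomposes into edge-disjoint consistently oriented cycles; reversing these with operation $1$ carries $U$ to $U'$. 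This is the robust core of the argument.

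In general $c(U)-c(U')=\Delta f$ for some $f\colon V(G)\to\Z$, and by the previous case it suffices to move $c(U)$ to the divisor $c(U')$ on the nose using cut reversals. Since operation $2$ realizes exactly the generators $\Delta\mathbf{1}_S$, I want to write $\Delta f$ as a sequence of such generators: normalize $0\le f$ (possible as $\Delta\mathbf{1}=0$) and fire the superlevel sets $\set{v : f(v)\ge i}$ from the top down.

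The crux is that operation $2$ is legal only when the relevant cut is \emph{consistently} oriented, which the current orientation need not make it. I see two routes. The constructive route is to prove the superlevel sets can be fired in an order for which each successive cut is, after harmless operation-$1$ adjustments, already consistently oriented, in the spirit of accessible firing sequences and Dhar's burning algorithm. The cleaner route is a counting argument: the three moves generate an equivalence refining the fibers of $c$, and $c$ is injective on $\overline{\mathbb O^k}(G)$; combining Backman's criterion with Baker--Norine shows $c$ is a bijection from $\overline{\mathbb O^k}(G)$ onto the partially orientable degree-$k$ divisors, so it remains only to check that the number of move-orbits in $\mathbb O^k(G)$ does not exceed the number of such divisors. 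For full orientations this is the classical fact that cycle--cocycle reversal classes are counted by spanning trees, equivalently by $|\Pic^0_{\dee}(G)|$; the partial case is an analogous Gioan--Backman enumeration with operation $3$ mediating. Establishing this enumeration, equivalently the surjectivity of cut reversals onto the coset structure of $\im\Delta$, is the step I expect to require the real work.
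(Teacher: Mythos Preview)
The paper does not prove this theorem; it simply cites Backman's original paper \cite{backman_riemann-roch_2017}. So there is no in-paper argument to compare your proposal against. What you have written is an attempt to reconstruct Backman's proof, and you are candid that it is incomplete.

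Your forward direction is correct and cleanly argued: each of the three moves preserves $c(U)$ either on the nose (cycle reversal, pivot) or modulo $\im\Delta$ (cut reversal).

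For the converse, you correctly isolate the hard step and flag it as unfinished. Two remarks. First, your ``token-pushing'' use of operation~3 to align the oriented edge-sets when $c(U)=c(U')$ as divisors is less innocent than you suggest: operation~3 only moves an arrowhead between two edges \emph{at the same vertex} $v$, with the arrowhead staying at $v$. Transporting an oriented edge across the graph requires chaining such pivots along a path, and at each intermediate vertex you need an unoriented edge available to receive the token. Arranging this in general is already nontrivial and in Backman's treatment is handled together with the cycle/cocycle machinery, not as a separate preliminary. Second, the step you identify as ``the real work''---showing that a prescribed Laplacian image $\Delta f$ can be realized by a sequence of \emph{legal} consistently-oriented cut reversals---is indeed the substance of Backman's argument. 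Neither of your two proposed routes is wrong in spirit, but both require exactly the accessible-firing / unfurling technology that Backman develops; the counting route in particular presupposes the Gioan-type enumeration for partial orientations, which is itself a result of comparable depth to the theorem you are trying to prove, so invoking it risks circularity.

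In short: your plan is a reasonable outline of how Backman's proof goes, with the difficulties correctly located, but the gaps you acknowledge are genuine and filling them amounts to reproducing the core of \cite{backman_riemann-roch_2017}.
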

\begin{proof}
    \cite{backman_riemann-roch_2017}.
\end{proof}

Sometimes we will fix sets of edges which we do not wish to be oriented in a partial orientation.

\begin{defn}
Let $X = \{e_1, e_2,\dots, e_r \} \subseteq E(G)$. Then we let $\mathbb O(G, X) \subseteq \mathbb O^{|E(G)| - r}(G)$ be the set of partial orientations on $G$ in which exactly the $e_i$ are unoriented. The quotient of this set by equivalence is denoted by $\overline{\mathbb O}(G, X)$.
\end{defn}

The orientation analogues of $\mathcal S(G)$ and $\mathcal N(G)$ will be of particular interest to us.

\begin{defn}
We define $\mc S_{\mathbb O}(G) \subseteq \overline{ \mb O}(G)$ to be the preimage of $\mc S(G)$ under the Chern class map, and likewise for $\mc N_{\mathbb O}(G)$ and $\mc N(G)$.
\end{defn}

The two sets just defined have a characterization which will be essential to our later reasoning. An orientation class is in $\mc S_{\mathbb O}(G)$ exactly when it contains a sourceless orientation, while the representatives of classes in $\mc N_{\mathbb O}(G)$ consist of exactly the acyclic orientations. More generally, we have the following result.

\begin{theorem} \textit{(Backman)} \label{geometric effectiveness}
The partial orientations which are equivalent to sourceless partial orientations are exactly those with an effective divisor in their Chern class, while partial orientations equivalent to acyclic partial orientations are exactly those without an effective divisor in their Chern class.
\end{theorem}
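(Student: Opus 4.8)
The plan rests on reading the Chern class pointwise. For a partial orientation $U$, the divisor $\tilde c(U) := \sum_{e} t(e) - \sum_v v$ has $v$-coefficient $\operatorname{indeg}_U(v) - 1$, where $\operatorname{indeg}_U(v)$ is the number of oriented edges pointing at $v$. Hence $U$ is sourceless precisely when $\tilde c(U)$ is effective. By the linear equivalence of orientations theorem, equivalent partial orientations share a Chern class, so both conditions ``the class of $U$ has a sourceless representative'' and ``$c(U)$ contains an effective divisor'' depend only on $[U]$. The forward inclusion of the first assertion is then immediate: if $W \sim U$ is sourceless, then $\tilde c(W)$ is an effective divisor lying in $c(U) = c(W)$.

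For the acyclic side I would show that an acyclic representative forces the class to be nonspecial. Given acyclic $U$, choose a source $q$ (a vertex of in-degree $0$; one exists because the oriented edges form a finite acyclic digraph). Reading $\tilde c(U)$ as a chip configuration, I claim it is $q$-reduced with $\tilde c(U)(q) = -1 < 0$, whence by the standard theory of reduced divisors the class contains no effective divisor. The $q$-reducedness I would verify with Dhar's burning algorithm: ordering the vertices by a topological order of $U$ and inducting, each $v \neq q$ catches fire once its in-neighbours have burned, because then all $\operatorname{indeg}_U(v)$ incoming edges are burnt and that number exceeds $\tilde c(U)(v) = \operatorname{indeg}_U(v)-1$. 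The one subtlety is the presence of several sources; these I would first consolidate into a single source using cut reversals (operation (2)), or dispose of by a direct summation estimate over the fire-free set.

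The substance is the two backward inclusions. By the forward ones together with the dichotomy ``$c(U)$ either does or does not contain an effective divisor,'' these reduce to the single claim that every partial orientation is equivalent to a sourceless one or to an acyclic one. For the sourceless case I would fix an effective divisor $D$ in the class and seek a representative with $\operatorname{indeg}(v) = D(v)+1$; the existence of such an orientation on the correct number of edges is a Hall/flow feasibility question, whose constraints take the form $\sum_{v\in S}(D(v)+1) \leq |E| - |E(G[\bar S])|$ for $S \subseteq V$. These cannot hold for an arbitrary effective representative (a divisor may demand more in-degree at a vertex than its valence allows), so the point is to replace $D$ by a well-chosen equivalent effective divisor — for instance its $q$-reduced form — and to transport between representatives using operations (1)--(3).

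I expect this realization step to be the main obstacle. It is essentially a chip-firing reachability statement: the three moves must be able to drive $\tilde c(U)$ to an effective configuration (respectively, to a source-producing acyclic one) exactly when the class permits it. This is where Dhar's burning algorithm and the combinatorics underlying Backman's partial orientability theorem do the real work; indeed, since any effective $D$ gives $|D + \sum_v v| \neq \emptyset$ automatically, partial orientability guarantees \emph{some} representative of the class, and the remaining task is to upgrade that representative to a sourceless one by the chip-firing dynamics encoded in operations (1)--(3).
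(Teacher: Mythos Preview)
The paper does not prove this theorem; it simply cites \cite[Theorem~4.10]{backman_riemann-roch_2017}. So there is no ``paper's own proof'' to compare against, and your proposal is an attempt to reconstruct Backman's argument.

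Your forward directions are essentially correct. The sourceless $\Rightarrow$ effective implication is immediate from the pointwise reading of the Chern class, as you say. The acyclic $\Rightarrow$ not-effective implication via Dhar's algorithm is the right idea, and you are right to flag multiple sources as the subtlety; your suggested fix (consolidate sources via cut reversals, or argue directly over the unburnt set) works, though for \emph{partial} orientations you must also check that unoriented edges do not obstruct the burning---they help rather than hurt, since they contribute to the count of burning edges at a vertex without contributing to its in-degree.

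The genuine gap is the backward direction, and here there is a small confusion in your final paragraph. You invoke the partial orientability criterion to produce ``some representative of the class'' and then speak of ``upgrading'' it to a sourceless one via operations (1)--(3). But observe: if you could produce a partial orientation $W$ whose Chern divisor \emph{equals} the given effective $D$ (not merely lies in its class), then $W$ is already sourceless, since $\operatorname{indeg}_W(v) = D(v)+1 \geq 1$ for every $v$. No upgrading is needed. The actual content, then, is whether every effective $D$ of the right degree is realized \emph{on the nose} as $\tilde c(W)$---this is precisely the Hall/flow feasibility question you raise, and it is not settled by the partial orientability theorem as stated in the paper (which is phrased at the level of linear equivalence classes). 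Backman's proof handles this realization via an explicit algorithmic procedure (the ``unfurling'' and ``modified unfurling'' algorithms mentioned later in this paper's \cref{software}), rather than by a direct Hall-type argument on a reduced representative. Your sketch identifies the right obstacle but does not cross it.
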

\begin{proof}
    \cite[Theorem 4.10]{backman_riemann-roch_2017}.
\end{proof}

A classical theorem of Whitney says that all isomorphisms of graphic matroids can be produced through sequences of certain topological operations on their associated graphs. We conclude this section by couching this theorem in our language.

\begin{defn}
A \textit{cyclic bijection} between graphs consists of a bijection between their edges which preserves simple unordered cycles. This is precisely the same as an isomorphism of graphic matroids as in \cref{matroid}. Let $\Cyc$ be the category of $2$-connected based graphs and base preserving cyclic bijections between them. Within the morphisms of $\Cyc$ there is a distinguished class of \textit{edge isomorphisms}, which are those which extend to a genuine isomorphism of graphs.
\end{defn}

\begin{example} \label{not edge isomorphisms}
Consider the following objects in $\Cyc$.
\[\begin{tikzcd}
	& {(G, e_1)} &&&& {(J, r_1)} &&&& {(K, w_1)} \\
	\bullet && \bullet && \bullet && \bullet && \bullet && \bullet \\
	\\
	\bullet && \bullet && \bullet && \bullet && \bullet && \bullet
	\arrow["{r_6}"{description}, curve={height=-12pt}, no head, from=4-7, to=4-5]
	\arrow["{r_5}"{description}, curve={height=-12pt}, no head, from=4-5, to=4-7]
	\arrow["{r_2}"{description}, curve={height=12pt}, no head, from=2-5, to=4-5]
	\arrow["{r_3}"{description}, curve={height=12pt}, no head, from=4-5, to=2-5]
	\arrow["{r_4}"{description}, no head, from=2-7, to=2-5]
	\arrow["{r_1}"{description}, no head, from=4-7, to=2-7]
	\arrow["{e_1}"{description}, no head, from=4-3, to=2-3]
	\arrow["{e_5}"{description}, curve={height=12pt}, no head, from=4-3, to=4-1]
	\arrow["{e_6}"{description}, curve={height=-12pt}, no head, from=4-3, to=4-1]
	\arrow["{e_2}"{description}, curve={height=12pt}, no head, from=2-3, to=2-1]
	\arrow["{e_3}"{description}, curve={height=-12pt}, no head, from=2-3, to=2-1]
	\arrow["{e_4}"{description}, no head, from=2-1, to=4-1]
	\arrow["{w_2}"{description}, no head, from=2-9, to=2-11]
	\arrow["{w_1}"{description}, no head, from=2-11, to=4-11]
	\arrow["{w_3}"{description}, no head, from=4-9, to=2-9]
	\arrow["{w_4}"{description}, no head, from=4-9, to=4-11]
\end{tikzcd}\]
There is a morphism from $(G, e_1)$ to $(J, r_1)$ given by $e_i \mapsto r_i$. This cannot be an edge isomorphism, because the two underlying graphs are not isomorphic. However, a cyclic bijection can fail to be an edge isomorphism more subtly. Any permutation of the $w_i$ which fixes $w_1$ yields an endomorphism of $(K, w_1)$. Take $\sigma$ to be the one which swaps $w_2$ and $w_3$, and fixes $w_4$. This is also not an edge isomorphism, for if we were to extend $\sigma$ to a graph isomorphism it would not preserve the adjacency of $w_1$ and $w_2$.
\end{example}

In the case that a graph has only two vertices, edge isomorphisms do not extend uniquely to graph isomorphisms, but in all other cases such an extension is unique.

\begin{defn}
Let $G$ be a $2$-connected graph with a nonempty subgraph $X$. Suppose that there is a (necessarily unique) nonempty subgraph $X^*$ of $G$ with the properties that $G = X \cup X^*$ and that $X \cap X^*$ consists of exactly two vertices, say $v$ and $w$. Further suppose that each of $X$ and $X^*$ contains at least three vertices. Then we call $X$ an \textit{arch} of $G$, call $v$ and $w$ the \textit{tips} of $X$, and in a stretch of notation call $X^*$ the \textit{complement} of $X$.
\end{defn}

\begin{defn}
Let $X$ be an arch of the based graph $(G, e)$ which does not contain $e$. The \textit{Whitney move} on $X$ consists of taking $X \sqcup X^*$ and gluing the copy of $v$ in $X$ over the copy of $w$ in $X^*$, and the copy of $w$ in $X$ over the copy of $v$ in $X^*$. The graph thus produced is called $\mc W_X G$. We give it the base edge $e$.
\end{defn}

When $G$ is $2$-connected, the identity yields a morphism $(G, e) \to (\mc W_X G, e)$ in $\Cyc$, which by a stretch of notation we also refer to as a Whitney move and denote by $\mc W_X$. By construction $M(-)$ can be regarded as a functor from $\Cyc$ to the category of based graphic matroids, in which the arrows are isomorphisms - for this reason, we denote the isomorphism of matroids associated with a cyclic bijection $\vphi$ by $M(\vphi)$. Note that $M(G) = M(\mc W_X G)$, and in fact $M(\mc W_X) = \id_{M(G)}$.

\begin{example}
    The morphism from $(G, e_1)$ to $(J, r_1)$ of \cref{not edge isomorphisms} is, after relabeling edges, an example of a Whitney move. The endomorphism of $(K, w_1)$ from the same example is also a Whitney move.
\end{example}

\begin{theorem}[Whitney's 2-isomorphism theorem] \label{whitney 2 iso}
Let $\phi: (G, e) \to (H, w)$ be a morphism in $\Cyc$. Then $\phi$ can be written as a composition $\tau \circ \mc W_{X_n} \circ \dots \circ \mc W_{X_1}$, where $\tau$ is an edge isomorphism with $M(\phi) = M(\tau)$.
\end{theorem}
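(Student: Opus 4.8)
The plan is to reduce the statement to the classical form of Whitney's theorem and then do the bookkeeping needed to realize that equivalence as an honest factorization of morphisms in $\Cyc$. Classically, two graphs with the same cycle matroid are related by a finite sequence of vertex identifications, vertex splittings, and twists. When both graphs are $2$-connected the first two operations are unavailable, since a $2$-connected graph has no cut vertex, so only twists occur; and a twist is exactly a Whitney move on an arch in our terminology. Thus the substance of the proof is to promote the matroid-level equivalence to an equality of morphisms $\phi = \tau \circ \mc W_{X_n} \circ \dots \circ \mc W_{X_1}$ and to verify the compatibility $M(\phi) = M(\tau)$. The latter is almost free: every Whitney move satisfies $M(\mc W_{X_i}) = \id_{M(G)}$, so applying the functor $M$ to such a factorization collapses all the moves and leaves $M(\phi) = M(\tau)$.

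I would run the argument by induction on the number of edges of $G$ (equivalently on $g(G)$). For the base cases, if $G$ is $3$-connected or has at most two vertices, then its cycle matroid determines it up to based graph isomorphism --- by the strong, $3$-connected form of Whitney's theorem in the first case and by direct inspection of parallel classes in the two-vertex case --- so $\phi$ is already an edge isomorphism and we take $n = 0$, $\tau = \phi$. Otherwise $G$ has more than two vertices and is not $3$-connected, so it admits a nontrivial $2$-separation, that is, an arch $X$ with tips $v, w$ and complement $X^*$. Because $\phi$ is a cyclic bijection it carries the minimal cut realizing this separation to a matroid $2$-separation of $H$, which since $H$ is graphic and $2$-connected is again a $2$-vertex cut; hence $\phi$ transports the arch to an arch of $H$. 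The base edge $e$ lies on exactly one side of the separation, and I always name that side $X^*$, so the arch $X$ I twist never contains $e$, as required by the definition of a Whitney move.

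The inductive step compares how the two sides are glued at the tips in $G$ versus in $H$. If the tip identifications already agree under $\phi$, I leave this separation alone; if they disagree, I apply $\mc W_X$, which by construction reverses precisely this gluing while fixing both the matroid and the base edge. Either way I now have a cyclic bijection agreeing with $\phi$ on edges and respecting the tips along this separation. Adjoining a virtual edge joining the tips to each side produces two $2$-connected graphs, each with strictly fewer edges, on which $\phi$ restricts to a base-compatible cyclic bijection. Applying the inductive hypothesis to each piece yields the remaining Whitney moves (each an arch of the full graph $G$, still avoiding $e$) together with an edge isomorphism of each piece; reassembling along the shared tips and deleting the virtual edges gives the terminal edge isomorphism $\tau$, and prepending $\mc W_X$ when needed gives the full factorization.

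The main obstacle is the reassembly. I must check that the two separately chosen edge isomorphisms of the pieces agree on the tips $v$ and $w$ --- which is exactly what fixing the top-level twist arranged --- and therefore glue to a single global edge isomorphism $\tau$ of $G$ onto $H$; this is where $2$-connectivity and the fact that the two sides meet in precisely the tips are essential. I must also confirm that twisting only the $e$-free side at every stage keeps the base edge rigid, so that the composite edge map of $\tau \circ \mc W_{X_n} \circ \dots \circ \mc W_{X_1}$ is visibly $\phi$. Finally, the degenerate $2$-separations excluded from the definition of an arch, namely parallel classes of edges and internal degree-two vertices, contribute no genuine twisting freedom and are absorbed into the base cases, so no separation is ever needed that would force a twist across $e$.
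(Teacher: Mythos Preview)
The paper does not actually prove this statement: its entire proof is the one-line citation ``See \cite{truemper_whitneys_1980} for a pleasant and simple proof.'' So there is nothing to compare against beyond noting that your sketch is essentially an outline of the classical argument, and indeed (as the paper itself indicates later in \S\ref{relation to whitney 2 iso theorem}) is close in spirit to Truemper's own proof, which inducts on edge count after arranging a decomposition along $2$-separations.

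Two places where your sketch would need tightening if written out. First, the paper's definition of \emph{arch} requires both $X$ and $X^*$ to contain at least three vertices, so ``not $3$-connected'' does not immediately yield an arch; you gesture at this in your last paragraph, but the base case must explicitly absorb all $2$-connected graphs with no arch in this sense (e.g.\ three-vertex graphs, or graphs whose only $2$-separations peel off a parallel class). Second, your step ``compare how the two sides are glued at the tips in $G$ versus in $H$'' and decide on the twist \emph{before} inducting is slightly out of order: at that stage $\phi$ gives you no canonical identification of the tips of $\phi(X)$ with those of $X$. The clean way is to induct on the two augmented pieces first, obtain edge isomorphisms on each, and \emph{then} insert $\mc W_X$ exactly when the two piece-isomorphisms disagree on the shared pair $\{v,w\}$; your reassembly paragraph almost says this. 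Neither issue is fatal, and the overall architecture is correct.
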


\begin{proof}
See \cite{truemper_whitneys_1980} for a pleasant and simple proof.
\end{proof}

By allowing a second type of move it is easy to extend this result to all connected graphs, but that case is not of interest to us. Notice that since both the arch and its complement have at least three vertices, removing the tips of an arch always disconnects a graph. Otherwise, the Whitney move associated with this arch is an edge isomorphism. Thus this theorem implies that any cyclic bijection between $3$-connected graphs is actually an edge isomorphism, since the only possible Whitney moves for these graphs are edge isomorphisms.

\section{Oriented Functoriality} \label{oriented functoriality}
We now wish to describe the Jacobian and our other, related objects in functorial terms. Ideally, the domain for our functors would be $\Cyc$, but the objects of this category carry too little data to support more than the previously discussed functor to graphic matroids. To capture our entire range of discussion, the minimal extra data necessary is that of $\Cyc$ but with each graph also being directed.

\begin{defn}
Let $\orCyc$ be the category of $2$-connected and $2$-edge connected based oriented graphs and base preserving cyclic bijections between them.
\end{defn}

\begin{example}
    The morphisms between objects in $\orCyc$ see nothing of the orientation data. Put another way, the forgetful functor $\orCyc \to \Cyc$ which discards the base orientations is full and faithful. Thus we could orient the edges in \cref{not edge isomorphisms} arbitrarily and obtain morphisms in $\orCyc$. The difference is that having specified base orientations as well as base edges allows us to do computations using these morphisms in a canonical way. \Cref{functoriality} describes this.
\end{example}

\begin{theorem} \label{functoriality}
Let $\vphi: (\hat G, \hat e) \to (\hat H, \hat w)$ be a morphism in $\orCyc$. For each $\ell \in E(G)$, choose a sign $\sgn_\vphi(\ell) \in \{-1, 1\}$, and let $\ell \mapsto \sgn_\vphi(\ell) \vphi(\ell)^*$ extend linearly to an inner product preserving isomorphism $\vphi_*: C^1(G, \R) \to C^1(H, \R)$. There is a unique choice of signs so that $\sgn_\vphi(e) = 1$ and $\vphi_*$ restricts to an isomorphism between $H^1(G, \Z)$ and $H^1(H, \Z)$ (and therefore $H^1(G, \R)$ and $H^1(H, \R)$).
\end{theorem}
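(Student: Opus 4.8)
The plan is to split the statement into a reformulation, an existence argument, and a uniqueness argument. Throughout I would write $\vphi_* = \Phi_0\circ D_\sigma$, where $\Phi_0(e^*) = \vphi(e)^*$ is the ``unsigned'' relabeling isomorphism $C^1(G,\R)\to C^1(H,\R)$ and $D_\sigma(e^*) = \sigma(e)e^*$ is the diagonal operator attached to the sign function $\sigma = \sgn_\vphi$. Since $\Phi_0$ and every $D_\sigma$ carry an orthonormal basis to an orthonormal basis, $\vphi_*$ is automatically an inner product preserving isomorphism of the real cochain spaces for any $\sigma$, so the entire content is to pin down $\sigma$ so that the integer cycle lattices correspond. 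The reduction rests on a support lemma: any element of $H^1(H,\Z)$ supported on the edges of a single simple cycle $D$, with coefficients in $\{-1,0,1\}$, equals $\pm\alpha(D)$. This holds because $H^1(H,\R)$ is the space of circulations (it is orthogonal to the cut space, in particular to every single-vertex cut), and an integral circulation supported on a simple cycle has constant coefficient magnitude by conservation at each visited vertex. Because $\vphi$ is a cyclic bijection, $\vphi(C)$ is a simple cycle of $H$ and $\vphi_*(\alpha(C)) = \sum_{e\in C}\epsilon_C(e)\sigma(e)\vphi(e)^*$ is supported exactly on $\vphi(C)$ with $\pm1$ entries, so $\vphi_*(\alpha(C))\in H^1(H,\Z)$ forces $\vphi_*(\alpha(C)) = \pm\alpha(\vphi(C))$. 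Since simple algebraic cycles generate $H^1(G,\Z)$, the theorem reduces to finding $\sigma$ that sends each simple algebraic cycle to $\pm$ the algebraic cycle of its image; applying this to a fundamental-cycle basis for a spanning tree $T$ (using that $\vphi$ carries spanning trees to spanning trees, hence fundamental cycles to fundamental cycles) will make the resulting map onto, hence an isomorphism of lattices.

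For existence I would invoke Whitney's theorem (\cref{whitney 2 iso}) to write $\vphi = \tau\circ\mathcal W_{X_n}\circ\cdots\circ\mathcal W_{X_1}$ with $\tau$ an edge isomorphism, and build $\sigma$ factor by factor. For the edge isomorphism $\tau$ one takes $\sigma_\tau(\ell)=+1$ or $-1$ according to whether $\tau$ preserves or reverses the orientation of $\ell$, so that $\tau_*(\alpha(C)) = \alpha(\tau(C))$ exactly. For a Whitney move $\mathcal W_X$ one takes $\sigma(\ell)=-1$ for $\ell\in X$ and $+1$ otherwise; a short case check (a cycle lying inside $X$ or inside $X^*$ is unaffected, while a cycle crossing the two tips has its $X^*$-portion traversed in the opposite sense after regluing, which the signs absorb) shows each such move carries the cycle lattice isomorphically onto the cycle lattice. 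Multiplying the transported sign functions yields a $\sigma$ for which $\vphi_*$ restricts to an isomorphism $H^1(G,\Z)\xrightarrow{\sim}H^1(H,\Z)$. Since every arch $X_i$ avoids the base edge, the base edge acquires sign $\sigma_\tau(e)\in\{\pm1\}$; if this is $-1$ I replace $\sigma$ by $-\sigma$ (that is, $\vphi_*$ by $-\vphi_*$, still a lattice isomorphism), achieving $\sigma(e)=+1$. Alternatively, existence follows from the uniqueness up to reorientation of the unimodular representation of the graphic matroid $M(G)=M(H)$, applied to the two cycle lattices $H^1(G,\Z)$ and $\Phi_0^{-1}(H^1(H,\Z))$.

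For uniqueness, suppose $\sigma$ and $\sigma'$ both work and both satisfy $\sigma(e)=\sigma'(e)=1$ on the base edge. Put $\rho = \sigma\sigma'$, so $\rho(e)=1$ and $D_\rho = \vphi_*^{-1}\circ\vphi_*'$ is an automorphism of $H^1(G,\Z)$. By the support lemma, $D_\rho(\alpha(C)) = \pm\alpha(C)$ for every simple cycle $C$, so $\rho$ is constant on the edges of each simple cycle. Because $G$ is $2$-connected, any two of its edges lie on a common simple cycle, so $\rho$ is globally constant; since $\rho(e)=1$, this forces $\rho\equiv1$ and $\sigma=\sigma'$.

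I expect the main obstacle to be existence, or equivalently the consistency of the per-edge sign. If one tries to read $\sigma(\ell)$ off a single fundamental cycle containing $\ell$, one must check that every fundamental cycle through $\ell$ assigns the same sign, and this compatibility is exactly the nontrivial point (it is why graphs with the same matroid but different cycle lattices cannot occur here). Routing existence through Whitney's theorem, or through uniqueness of unimodular representations, is what lets me sidestep a direct verification of that compatibility; by contrast, the reduction and the uniqueness argument are formal consequences of the circulation structure and of $2$-connectivity.
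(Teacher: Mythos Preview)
Your argument is correct and takes a genuinely different route from the paper's. The paper defines $\sgn_\vphi(\ell)$ \emph{directly} by choosing a simple cycle through $e$ and $\ell$, reading off the sign from how $\vphi(C)$ is traversed, and then uses Whitney's theorem only to verify that this assignment is independent of the chosen cycle; it handles cycles missing $e$ by writing them as differences of cycles through $e$, and proves uniqueness via an orthogonality computation against a specific minimal cut. You instead build $\sigma$ factor by factor from a Whitney decomposition (equivalently from the unimodular-representation viewpoint), and your support lemma plus $2$-connectedness gives a cleaner uniqueness argument than the paper's cut computation. What the paper's approach buys is constructiveness: one can compute $\sgn_\vphi(\ell)$ from a single cycle through the base edge without first producing a Whitney factorisation of $\vphi$, and this is exactly how the sign is computed in practice (see the example immediately following the theorem and the software described later). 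What your approach buys is conceptual economy and a uniqueness proof that transparently isolates the role of $2$-connectedness. Two small slips that do not affect the outcome: for a simple cycle $C\subset X$, your map sends $\alpha(C)$ to $-\alpha(C)$ rather than leaving it ``unaffected'' (still in the cycle lattice, so harmless); and in the crossing case it is the $X$-portion, not the $X^*$-portion, whose traversal sense the regluing reverses, consistent with your sign choice $\sigma|_X=-1$.
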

\begin{proof}
Choose an edge $u \neq e$ in $G$. By Menger's theorem \cite{bollobas_modern_1998}, there is a simple cycle $C = \ell_1, \dots, \ell_m$ which crosses both $e$ and $u$, say with $\ell_1 = e$ and in the direction aligned with $e$. Let $W$ be the cycle in $H$ given by the unordered simple cycle $\vphi(C)$, in the direction matching $w$.

Let $\alpha(C) = \sum_i s_i \ell_i^*$ and $\alpha(W) = \sum_{i} q_i \vphi(\ell_i)^*$. Set 
\begin{equation*} \sgn_\vphi(\ell_i) = q_i/s_i\end{equation*}
for each $\ell_i$ appearing in $C$. This choice of signs is engineered so that $\vphi_*$ will take $\alpha(C)$ to $\alpha(W)$.

We need to show that $\vphi_*$ is well defined, and in particular that our assigned values of $\sgn_\vphi(\ell_i)$ do not depend on our choice of $C$. Let $C' = r_1, \dots, r_k$ be another simple cycle in $G$ containing $e$ and proceeding in the direction matching $e$, say with $r_1 = e$. Suppose there is a $u = \ell_i = r_j$ other than $e$ which $C$ and $C'$ cross in the same direction. Because $C$ and $C'$ are simple and share at least two edges, each must visit at least three vertices.

Let $W'$ be the cycle in $H$ given by the unordered simple cycle $\vphi(C')$, in the direction matching $w$. Write $\alpha(C') = \sum_i s'_i r_i^*$ and $\alpha(W') = \sum_{i} q'_i \vphi(r_i)^*$. What we need is that $q_i/s_i = q'_j/s_j'$. Note that $u$ is crossed by both $C$ and $C'$ in the same direction exactly when $s_i = s_j'$. Similarly, $W$ and $W'$ cross $\vphi(u)$ in the same direction exactly when $q_i = q_j'$. Thus $q_i/s_i = q'_j/s_j'$ is equivalent to the claim that $C$ and $C'$ cross $u$ in the same direction if and only if $W$ and $W'$ cross $\vphi(u)$ in the same direction. In fact the forward implication will be sufficient, since we can then apply the same reasoning to $\vphi\inv: (H, w) \to (G, e)$. This means it will be sufficient to show that $W$ and $W'$ visit the same vertex just before crossing $\vphi(u)$.

We know from \cref{whitney 2 iso} that as a morphism of objects in $\Cyc$, we can decompose $\vphi$ into a sequence of Whitney moves followed by an edge isomorphism. An edge isomorphism extends to a graph isomorphism which preserves the sequence of vertices a cycle visits, provided the cycle visits at least three vertices. It follows that it is enough to show that if $\vphi$, as a morphism in $\Cyc$, is given by a single Whitney move, then $W$ and $W'$ cross $\vphi(u)$ in the same direction.

Let $v_1, v_2$ be the tips of an arch $X$ in $G$. Let $a_1, \dots, a_f$ be the sequence of vertices in $X$ which $C$ visits, and likewise for $b_1, \dots, b_g$ and $C'$. After the Whitney move on $X$, we have that $W$ visits $a_f, a_{f-1}, \dots, a_1$ in $X$ and $W'$ visits $b_{g}, b_{g-1}, \dots, b_1$ in $X$, and the sequence of vertices visited is otherwise the same. It follows that both $W$ and $W'$ cross $\vphi(u)$ in the same direction. This shows that $\vphi_*$ is well defined.

By construction $\vphi_*$ takes any simple algebraic cycle $\alpha(C)$ into $H^1(H, \Z)$, provided that $C$ contains $e$. Let $\alpha(U)$ be a simple algebraic cycle in $G$ not containing $e$. If $U$ does not visit both end points of $e$, let $Y$ be a simple cycle containing both $e$ and some edge in $U$. If $U$ does visit both end points of $e$, then $U$ contains (exactly one) edge $r$ which is parallel to $e$, and we let $Y$ be $U$ but with $r$ replaced with $e$. 

Let $v_1$ and $v_2$ be the first and last, respectively, vertices visited by $Y$ that $U$ also visits. Following $Y$ to $v_1$, following $U$ in the two possible directions to $v_2$, and then following $Y$ back to $e$ gives two simple cycles $Y_0$ and $Y_1$ with $\alpha(Y_0) - \alpha(Y_1) = \pm \alpha(U)$. Thus $\vphi_*(\alpha(Y_0)) - \vphi_*(\alpha(Y_1)) = \pm (\vphi_*(\alpha(U))) \in H^1(H, \Z)$.

Since simple cycles generate $H^1(G, \Z)$, it follows that $\vphi_*$ takes $H^1(G, \Z)$ into $H^1(H, \Z)$. From the definitions we have that $(\vphi\inv)_* = \vphi_*\inv$, so $\vphi_*$ actually restricts to an isomorphism $H^1(G, \Z) \cong H^1(H, \Z)$.

It only remains to check uniqueness. Let $T$ be given by any choice of signs with $T(e^*) = w^*$, and suppose that $T$ restricts to an isomorphism $H^1(G, \Z) \cong H^1(H, \Z)$. For any $u \in E(G)$ other than $e$, choose a simple cycle $C$ in $G$ which contains $e$ and $u$. Then $\vphi_*(\alpha(C)) - T(\alpha(C))$ has only even coefficients, and has $w^*$ coefficient zero. Let $t$ be its $\vphi(u)^*$ coefficient. Let $K \subset V(H)$ be $t(w)$ and all further vertices that $\vphi(C)$ visits before crossing $\vphi(u)$. The set of edges $K'$ between vertices in $K$ and vertices not in $K$ is a minimal cut, and $K' \cap \vphi(C) = \{w, \vphi(u)\}$. For each $\ell \in K'$, let $a_\ell = 1$ if $\ell$ points at $K$ and $a_\ell = -1$ otherwise. Then $\sum_{\ell \in K'} a_\ell \ell^*$ is orthogonal to $H^1(H, \R)$, and in particular \begin{align*}0 &= \left \langle \sum_{\ell \in K'} a_\ell \ell^*, \vphi_*(\alpha(C)) - T(\alpha(C)) \right \rangle \\
	&= a_{\vphi(u)} t \end{align*}
Therefore $t = 0$. Since $u$ was arbitrary it follows that $\vphi_* = T$.
\end{proof}

\begin{remark}
If we allowed $1$-connective or $1$-edge connective graphs, the uniqueness in the prior result would always fail for them.
\end{remark}

\begin{defn}
Given a $\vphi$ as in the prior theorem, let $\vphi_*$ and $\sgn_\vphi: E(G) \to \{-1, 1\}$ be the unique functions specified by the theorem. We say that $\vphi$ and $\vphi_*$ are \textit{orientation preserving} if every $\sgn_\vphi(\ell) = 1$, and that $\vphi$ \textit{reverses} $\ell$ if $\sgn_\vphi(\ell) = -1$.
\end{defn}

\begin{example} \label{computation of pushforward} Consider the following objects in \orCyc.

\[\begin{tikzcd}
	& \bullet && \bullet && \bullet && \bullet \\
	{(\hat G, \Gamma_1, \hat e_1)} &&&&&&&& {(\hat H, \Gamma_2, \hat r_1)} \\
	& \bullet && \bullet && \bullet && \bullet \\
	&& \bullet &&&& \bullet
	\arrow["{e_3}"{description}, from=1-4, to=1-2]
	\arrow["{e_4}"{description}, curve={height=12pt}, from=1-2, to=3-2]
	\arrow["{e_5}"{description}, curve={height=-12pt}, from=1-2, to=3-2]
	\arrow["{e_1}"{description}, curve={height=-12pt}, from=3-4, to=1-4]
	\arrow["{e_2}"{description}, curve={height=-12pt}, from=1-4, to=3-4]
	\arrow["{e_7}"{description}, from=3-4, to=4-3]
	\arrow["{e_6}"{description}, from=3-2, to=4-3]
	\arrow["{r_7}"{description}, from=1-8, to=1-6]
	\arrow["{r_2}"{description}, curve={height=-12pt}, from=1-6, to=3-6]
	\arrow["{r_6}"{description}, from=3-6, to=4-7]
	\arrow["{r_3}"{description}, from=4-7, to=3-8]
	\arrow["{r_4}"{description}, curve={height=-12pt}, from=3-8, to=1-8]
	\arrow["{r_5}"{description}, curve={height=-12pt}, from=1-8, to=3-8]
	\arrow["{r_1}"{description}, curve={height=-12pt}, from=3-6, to=1-6]
\end{tikzcd}\]

There is a cyclic bijection $\vphi$ with $e_i \mapsto r_i$. We will compute $\vphi_*$ for this map. We pick out the following simple cycles in $G$. \begin{align*}
    C_1 &= e_1, e_3, e_4, e_6, e_7 & \alpha(C_1) &= e_1^* + e_3^* + e_4^* + e_6^* - e_7^* \\
    C_2 &= e_1, e_2 & \alpha(C_2) &= e_1^* + e_2^*\\
    C_3 &= e_4, e_5 & \alpha(C_3) &= e_4^* - e_5^*
\end{align*}
Taking $\alpha(C_1), \alpha(C_2), \alpha(C_3)$ yields a basis for $H^1(G, \Z)$ with the orientation $\Gamma_1$.

To see which edges $\vphi$ reverses, we take, for example, the simple cycle $C_1$ and determine coefficients so that \begin{align*}
	\vphi_*(\alpha(C_1)) &= \vphi_*(e_1^* + e_3^* + e_4^* + e_6^* - e_7^*) \\
	&= \sgn_\vphi(e_1)\vphi(e_1)^* + \sgn_\vphi(e_3)\vphi(e_3)^* + \dots + \sgn_\vphi(e_6)\vphi(e_6)^* - \sgn_\vphi(e_7)\vphi(e_7)^* \\
	&= r_1^* + \sgn_\vphi(e_3)r_3^* + \dots + \sgn_\vphi(e_6)r_6^* - \sgn_\vphi(e_7)r_7^*
\end{align*} 
is an algebraic cycle in $(\hat H, \Gamma_2)$. This can be done by following $\vphi(C_1)$ in the direction given by $r_1$ and noting whether we go with or against the direction of each $r_i$. We proceed similarly with $C_2$. 

On the other hand $C_3$ does not cross $e_1$, so we cannot compute the signs for it in the same way. We could proceed as in the proof of \cref{functoriality}, but a more efficient method is to observe that we have already determined that $\sgn_\vphi(e_4) = -1$. We can determine the unknown coefficient satisfying \begin{align*} \vphi_*(\alpha(C_3)) &= \vphi_*(e_4^* - e_5^*) \\
	&= -r_4^* - \sgn_{\vphi}(e_5)r_5^*
\end{align*} 
by following $\vphi(C_3)$ in the direction which goes against $r_4$, and observing that we cross $r_5$ against its direction. If $\alpha(\vphi(C_3))$ is to be an algebraic cycle, traveling against an edge's direction must give us a negative coefficient in $-r_4^* - \sgn_{\vphi}(e_5)r_5^*$, so that we must have $\sgn_{\vphi}(e_5) = 1$.

These computations show that $\vphi$ is given on the canonical basis of $C^1(G, \R)$, ordered by the subscripts of the $e_i$, by the matrix \[ \begin{bmatrix}
1 & 0 & 0 & 0 & 0 & 0 & 0 \\
0 & 1 & 0 & 0 & 0 & 0 & 0 \\
0 & 0 & -1 & 0 & 0 & 0 & 0 \\
0 & 0 & 0 & -1 & 0 & 0 & 0 \\
0 & 0 & 0 & 0 & 1 & 0 & 0 \\
0 & 0 & 0 & 0 & 0 & -1 & 0 \\
0 & 0 & 0 & 0 & 0 & 0 & 1 
\end{bmatrix}  \]
The nonzero entries are the values of $\sgn_\vphi$ on the corresponding edges. \end{example}

\begin{remark} 
\Cref{functoriality} implies that the invariants $C^1(-, \R), H^1(-, \R), H^1(-, \Z)$ and $J(-)$ are functorial with respect to $\orCyc$. When no confusion will result, we use $(-)_*$ for whichever of them is relevant. In particular, the induced maps $C^1(G, \R) \to C^1(H, \R)$ are given by signed permutation matrices, in which the signs of the entries indicates which edges $\vphi$ reverses. This implies that given a composition $\rho \circ \vphi$ in $\orCyc$, for any edge $e$ we have $\sgn_{\rho \circ \vphi}(e) = \sgn_\rho(\vphi(e))\sgn_\vphi(e)$. 
\end{remark}

Now we would like to specify an orientation for $\mc W_X \hat G$. There is a natural choice.

\begin{defn}
Let $\hat G$ be a directed graph with an arch $X$. We define an orientation on $\mc W_X \hat G$ as follows: let each edge in $X \sqcup X^*$ be oriented as in $\hat G$, and then allow this orientation to descend to the gluing of $X$ and $X^*$.
\end{defn}

\begin{prop}
For any $(\hat G, \hat e)$ and arch $X \subset E(G)$, we have that $\mc W_X$ reverses exactly the edges in $X$.
\end{prop}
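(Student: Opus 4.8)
The plan is to exhibit an explicit candidate for $(\mc W_X)_*$ and then invoke the uniqueness clause of \cref{functoriality}. Since the underlying cyclic bijection $\mc W_X$ is the identity on edges and the chosen orientation on $\mc W_X \hat G$ keeps every edge pointing as in $\hat G$, the only freedom in the pushforward lies in the signs, so the candidate is forced to be diagonal. I therefore define $s(\ell) = -1$ for $\ell \in X$ and $s(\ell) = +1$ otherwise, and let $D \colon C^1(G, \R) \to C^1(\mc W_X G, \R)$ be the diagonal map $\ell^* \mapsto s(\ell)\,\ell^*$. Because the Whitney move is performed on an arch not containing the base edge $e$, we have $e \in X^*$ and hence $s(e) = +1$, matching the normalization required by \cref{functoriality}. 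It then suffices to show that $D$ restricts to a lattice isomorphism $H^1(G, \Z) \cong H^1(\mc W_X G, \Z)$; uniqueness will then force $D = (\mc W_X)_*$ and $\sgn_{\mc W_X} = s$.

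To verify the lattice condition I will test $D$ on a generating set of simple algebraic cycles, sorted by how they meet the tips $v, w$ of $X$. Since a simple cycle can only pass between $X$ and $X^*$ at a shared vertex, each simple cycle either lies entirely in $X$, lies entirely in $X^*$, or splits into one simple $X$-arc from $v$ to $w$ and one simple $X^*$-arc from $w$ to $v$. A cycle within $X^*$ is fixed by $D$ and remains a cycle after the move (only the tip labels change), so its algebraic cycle is preserved. A cycle within $X$ is sent by $D$ to its own negative, that is, to the reversal of the same cycle in $\mc W_X G$, which again lies in $H^1(\mc W_X G, \Z)$.

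The essential case is a simple cycle $C$ crossing the arch, say with $X$-arc running $v \to w$ and $X^*$-arc running $w \to v$ in $G$. The regluing identifies the copy of $v$ in $X$ with the copy of $w$ in $X^*$, and the copy of $w$ in $X$ with the copy of $v$ in $X^*$; consequently both arcs now run between the same ordered pair of glued vertices, and a closed cycle of $\mc W_X G$ on the same edge set is obtained precisely by reversing exactly one of the two arcs. Reversing the $X$-arc realizes the diagonal map $D$ (which fixes $X^*$ pointwise), and the resulting algebraic cycle equals $D(\alpha(C)) \in H^1(\mc W_X G, \Z)$. The binary choice of which arc to reverse is exactly the sign ambiguity, and it is resolved globally by the normalization $s(e) = +1$ together with $e \in X^*$, which forces the reversal to fall on all of $X$.

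Running the identical analysis for the inverse Whitney move --- the arch $X$ inside $\mc W_X G$, whose move recovers $G$ and whose candidate sign function is again $-1$ on $X$ --- shows that $D$ (which satisfies $D = D^{-1}$) also carries $H^1(\mc W_X G, \Z)$ back into $H^1(G, \Z)$, so $D$ is a lattice isomorphism. By uniqueness in \cref{functoriality}, $D = (\mc W_X)_*$, whence $\mc W_X$ reverses exactly the edges of $X$. I expect the main obstacle to be the bookkeeping in the crossing case: getting the two arcs' directions right under the crossed regluing so that it is genuinely $X$, and uniformly all of $X$, that must be reversed. Once the normalization $s(e)=+1$ breaks the binary ambiguity, the remaining verifications are routine.
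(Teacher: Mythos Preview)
Your argument is correct. The trichotomy for simple cycles (entirely in $X^*$, entirely in $X$, or a single $X$-arc plus a single $X^*$-arc between the tips) is valid because $X \cap X^*$ consists of exactly the two vertices $v,w$, and simplicity prevents a cycle from visiting either tip twice. Your verification that the diagonal map $D$ sends each such algebraic cycle into $H^1(\mc W_X G,\Z)$ is clean, and invoking the uniqueness clause of \cref{functoriality} then finishes the proof immediately. The closing meta-commentary (``I expect the main obstacle\dots'') is not part of the proof and should be dropped; likewise the remark about the binary choice being ``resolved globally by the normalization'' is unnecessary once you have already exhibited $D(\alpha(C))$ as a specific algebraic cycle in $\mc W_X G$.

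The paper takes a different route. Rather than writing down a candidate for $(\mc W_X)_*$ and appealing to uniqueness, it defines the set $T$ of edges on which the claimed sign already holds, observes $\hat e \in T$, and then shows that $T$ is closed under edge-adjacency: given $\hat a \in T$ and an adjacent $\hat b$, one runs a simple cycle through $\hat a,\hat b$ and checks that crossing between $X$ and $X^*$ flips traversal direction, forcing $\hat b \in T$. Connectedness of $G$ then gives $T = E(G)$. Both arguments rest on the same geometric observation --- that the regluing reverses the $X$-portion of any crossing cycle relative to its $X^*$-portion --- but your version packages it as a single global check against the uniqueness theorem, while the paper unwinds it into a local propagation along the graph. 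Your approach is arguably tidier here because \cref{functoriality} has already done the heavy lifting; the paper's approach has the minor advantage of not needing to reinvoke that theorem and of being phrased entirely in terms of traversal directions.
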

\begin{proof}
Let $T$ be the set of edges $w$ such that such that either $w \in X$ and $\mc W_X$ reverses $w$, or $w \not \in X$ and such that $\mc W_X$ does not reverse $w$. By definition we have that $\hat e \in T$. Suppose $\hat a \in T$, $\hat b \in E(\hat G)$, and that $\hat a$ is adjacent to $v,w$ and $\hat b$ is adjacent to $w, x$ in $G$. Choose a simple path $L$ in $\hat G$ from $x$ to $v$ avoiding $w$, so that $\hat a, \hat b, L$ is a simple cycle. Then following the edges of $\{\hat a, \hat b, L\}$ starting with crossing $\hat a$ in the direction it points also yields a simple cycle in $\mc W_X \hat G$, except that passing from $X$ to $X\op$ or vice versa results in crossing edges in the opposite direction. Thus $\hat b \in T$. By connectedness $T = E(G)$.
\end{proof}

\begin{cor}
A composition of Whitney moves $\mc W_{X_n} \circ \dots \circ \mc W_{X_1}$ is orientation preserving if and only if every edge appears in an even number of $X_i$.
\end{cor}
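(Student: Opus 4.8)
The plan is to reduce the statement to the preceding proposition via the multiplicativity of $\sgn$ under composition. The key observation is that, as morphisms in $\orCyc$, each Whitney move $\mc W_{X_i}$ acts as the identity on the common edge set: the gluing only rearranges vertices and never alters or relabels edges. Consequently, when I apply the composition rule $\sgn_{\rho \circ \vphi}(e) = \sgn_\rho(\vphi(e)) \sgn_\vphi(e)$ from the functoriality remark iteratively, the edge $e$ is never moved, so that setting $\psi_k = \mc W_{X_k} \circ \dots \circ \mc W_{X_1}$ and using $\psi_{k-1}(e) = e$ gives $\sgn_{\psi_k}(e) = \sgn_{\mc W_{X_k}}(e)\,\sgn_{\psi_{k-1}}(e)$. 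An easy induction then yields
\begin{equation*}
\sgn_{\mc W_{X_n} \circ \dots \circ \mc W_{X_1}}(e) = \prod_{i=1}^n \sgn_{\mc W_{X_i}}(e)
\end{equation*}
for every edge $e$.

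Next I would invoke the preceding proposition, which tells us that $\sgn_{\mc W_{X_i}}(e) = -1$ precisely when $e \in X_i$, and $\sgn_{\mc W_{X_i}}(e) = 1$ otherwise. Substituting this into the product, the right-hand side becomes $(-1)^{m(e)}$, where $m(e)$ is the number of indices $i$ with $e \in X_i$. Hence $\sgn_{\mc W_{X_n} \circ \dots \circ \mc W_{X_1}}(e) = 1$ if and only if $e$ lies in an even number of the $X_i$. Finally, the composition is orientation preserving exactly when this sign equals $1$ for every edge $e$ simultaneously, which by the previous sentence is equivalent to every edge appearing in an even number of the $X_i$.

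I do not anticipate any serious obstacle: once the identity-on-edges fact is recorded, the composition formula collapses to a clean product and the rest is bookkeeping. The only point requiring a moment of care is confirming that each $X_i$, viewed as a set of edges, is genuinely an arch of the intermediate graph produced by the earlier moves — but this is already built into the assumption that the composition $\mc W_{X_n} \circ \dots \circ \mc W_{X_1}$ is defined, so no extra argument is needed.
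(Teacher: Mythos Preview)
Your proposal is correct and is precisely the argument the paper has in mind: the paper states this corollary without proof, treating it as immediate from the preceding proposition together with the multiplicativity formula $\sgn_{\rho \circ \vphi}(e) = \sgn_\rho(\vphi(e))\sgn_\vphi(e)$ recorded in the remark after \cref{functoriality}. Your observation that each $\mc W_{X_i}$ is the identity on edges is exactly what makes the product collapse, and the rest is the intended bookkeeping.
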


An oriented version of Whitney's 2-isomorphism theorem is now immediate.

\begin{theorem}
Let $\phi: (\hat G, e) \to (\hat H, w)$ be a morphism in $\orCyc$. Then $\phi$ can be written as a composition $\phi = \tau \circ \mc W_{X_n} \circ \dots \circ \mc W_{X_1}$, where $\tau$ is an edge isomorphism with $M(\phi) = M(\tau)$. If $\tau$ is orientation preserving, then $\phi$ is orientation preserving if and only if each edge appears in an even number of the $X_i$. 
\end{theorem}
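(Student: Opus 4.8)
The plan is to obtain the factorization by transporting Whitney's theorem through the forgetful functor, and then to reduce the orientation-preserving characterization to the Corollary immediately preceding this statement by exploiting the multiplicativity of $\sgn$ under composition.

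First I would use that the forgetful functor $\orCyc \to \Cyc$ is full and faithful: the morphism $\phi$ corresponds to a morphism of the underlying based $2$-connected graphs, to which \cref{whitney 2 iso} applies, yielding a factorization $\phi = \tau \circ \mc W_{X_n} \circ \dots \circ \mc W_{X_1}$ with $\tau$ an edge isomorphism and $M(\phi) = M(\tau)$. To read this as a factorization in $\orCyc$ I would equip each intermediate graph with the natural orientation on $\mc W_{X_i}\hat G$ defined above. Since each intermediate graph is $2$-connected (being an object arising in the $\Cyc$ factorization) and a $2$-connected graph is automatically $2$-edge connected, each is a legitimate object of $\orCyc$; faithfulness of the functor then guarantees that the lifted arrows still compose to $\phi$ in $\orCyc$. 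This establishes the first, unconditional assertion.

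For the second assertion, set $\Psi = \mc W_{X_n} \circ \dots \circ \mc W_{X_1}$, so that $\phi = \tau \circ \Psi$. The key input is the composition rule $\sgn_{\rho \circ \psi}(e) = \sgn_\rho(\psi(e))\,\sgn_\psi(e)$ recorded after \cref{functoriality}. Applying it to $\phi = \tau \circ \Psi$ gives $\sgn_\phi(e) = \sgn_\tau(\Psi(e))\,\sgn_\Psi(e)$ for every edge $e$. Under the hypothesis that $\tau$ is orientation preserving, every $\sgn_\tau(\ell) = 1$, so this collapses to $\sgn_\phi(e) = \sgn_\Psi(e)$. Hence $\phi$ is orientation preserving precisely when $\Psi$ is, and the Corollary characterizing when a composite of Whitney moves is orientation preserving states that this occurs if and only if every edge lies in an even number of the $X_i$.

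There is little genuine difficulty here, as the substantive content already lives in the Proposition identifying the edges reversed by a single Whitney move and in its Corollary. The only points requiring care are bookkeeping: confirming that the intermediate graphs remain objects of $\orCyc$ so that the factorization really takes place in this category (handled by the implication $2$-connected $\Rightarrow$ $2$-edge connected), and verifying that the hypothesis on $\tau$ is exactly what isolates the sign of $\phi$ as that of the Whitney part $\Psi$. Both reduce to a clean application of the multiplicative composition law for $\sgn$.
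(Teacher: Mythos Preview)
Your proposal is correct and is exactly the elaboration the paper intends when it declares the result ``now immediate'': the factorization is Whitney's theorem lifted along the full and faithful forgetful functor, and the orientation claim follows from the multiplicativity of $\sgn$ together with the preceding Corollary. The only imprecision is the blanket assertion that a $2$-connected graph is automatically $2$-edge connected, which fails for $K_2$; a cleaner justification that the intermediate graphs lie in $\orCyc$ is that $2$-edge connectedness (every edge lies in a cycle) is a matroid property and is therefore preserved by Whitney moves.
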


\begin{cor}
In the situation of the above theorem, let $r \in \hat G$ be any edge, and let $\tau$ be orientation preserving. Suppose we have any other factorization $\phi = \rho \circ \mc W_{Y_m} \circ \dots \circ \mc W_{Y_1}$ with $\rho$ orientation preserving. Let $a$ be the number of times $r$ appears in the $X_i$, and likewise for $b$ and the $Y_i$. Then $a \equiv b \mod 2$.
\end{cor}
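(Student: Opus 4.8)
The plan is to exploit the fact that, by \cref{functoriality}, the sign function $\sgn_\phi \colon E(G) \to \{-1,1\}$ is uniquely determined by $\phi$ alone, independently of any factorization. Thus if I can read off $\sgn_\phi(r)$ from each of the two given factorizations, the two computations must agree, and comparing them will force $a \equiv b \bmod 2$.

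First I would record how signs behave under a Whitney move. Since $M(\mc W_X) = \id$, each Whitney move acts as the identity on edge sets, so composing several of them leaves every edge label unchanged. By the preceding proposition, $\mc W_X$ reverses exactly the edges of its arch $X$; that is, $\sgn_{\mc W_X}(\ell) = -1$ precisely when $\ell \in X$. Feeding this into the multiplicativity relation $\sgn_{\rho \circ \vphi}(e) = \sgn_\rho(\vphi(e))\,\sgn_\vphi(e)$ recorded earlier, and using that the intermediate edge labels never change so that $r$ stays named $r$ throughout, I obtain
\begin{equation*}
\sgn_{\mc W_{X_n} \circ \dots \circ \mc W_{X_1}}(r) = \prod_{i=1}^n \sgn_{\mc W_{X_i}}(r) = (-1)^{a},
\end{equation*}
where $a$ counts the $X_i$ containing $r$. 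Applying multiplicativity once more to fold in $\tau$, and using that $\tau$ is orientation preserving so $\sgn_\tau(r) = 1$, yields $\sgn_\phi(r) = (-1)^a$. The identical argument applied to the factorization $\phi = \rho \circ \mc W_{Y_m} \circ \dots \circ \mc W_{Y_1}$, with $\rho$ orientation preserving, gives $\sgn_\phi(r) = (-1)^b$.

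Finally I would conclude by invoking the uniqueness in \cref{functoriality}: since $\sgn_\phi(r)$ is intrinsic to $\phi$, the two expressions $(-1)^a$ and $(-1)^b$ must coincide, whence $a \equiv b \bmod 2$. There is essentially no genuine obstacle here; the only point requiring care is the bookkeeping in the second paragraph, namely verifying that because each Whitney move fixes edge labels, the multiplicative sign formula collapses to a plain parity count rather than producing a telescoping relabeling of $r$ across the composition.
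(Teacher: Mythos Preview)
Your argument is correct and is exactly the intended one: the paper states this corollary without proof, and the implicit reasoning is precisely to compute $\sgn_\phi(r)$ via each factorization using multiplicativity of signs, the fact that Whitney moves fix edge labels and reverse exactly the edges in their arch, and that $\tau$ and $\rho$ are orientation preserving, forcing $(-1)^a = (-1)^b$.
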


Thus whenever such an orientation preserving $\tau$ exists, Whitney's theorem has a kind of canonicity modulo $2$. This raises a natural question.

\begin{question} \label{is orientation preserving possible}
	Let $\phi: (\hat G, e) \to (\hat H, w)$ be a morphism in $\orCyc$. Then $\phi$ can be written as a composition $\phi = \tau \circ \mc W_{X_n} \circ \dots \circ \mc W_{X_1}$, where $\tau$ is an edge isomorphism with $M(\phi) = M(\tau)$. Can directed graph invariants tell us whether we can choose $\tau$ to be orientation preserving?
\end{question}

\begin{example}
	Consider the following objects in $\orCyc$.

\[\begin{tikzcd}
	&& {(\hat G, \Gamma_1, \hat e_1)} &&&&& {(\hat H, \Gamma_2, \hat r_1)} \\
	& \bullet &&&&& \bullet && \bullet \\
	\bullet && \bullet && a & \bullet \\
	&&&&&& \bullet && \bullet \\
	\bullet && b && \bullet \\
	&&&&&& \bullet && \bullet
	\arrow[curve={height=-12pt}, from=6-9, to=6-7]
	\arrow[curve={height=-12pt}, from=6-7, to=6-9]
	\arrow[curve={height=12pt}, from=4-7, to=6-7]
	\arrow[curve={height=12pt}, from=6-7, to=4-7]
	\arrow[from=4-9, to=4-7]
	\arrow["r"{description}, from=6-9, to=4-9]
	\arrow["e"{description}, from=5-5, to=3-5]
	\arrow[curve={height=12pt}, from=5-5, to=5-3]
	\arrow[curve={height=-12pt}, from=5-5, to=5-3]
	\arrow[curve={height=12pt}, from=3-5, to=3-3]
	\arrow[curve={height=-12pt}, from=3-5, to=3-3]
	\arrow[from=3-3, to=5-3]
	\arrow[from=3-3, to=2-2]
	\arrow[from=3-1, to=2-2]
	\arrow[from=5-1, to=5-3]
	\arrow[from=3-1, to=3-3]
	\arrow[from=2-2, to=5-1]
	\arrow[from=2-2, to=5-3]
	\arrow[from=5-3, to=3-1]
	\arrow[from=5-1, to=3-3]
	\arrow[from=3-1, to=5-1]
	\arrow[from=4-9, to=2-9]
	\arrow[from=4-7, to=2-7]
	\arrow[from=2-7, to=3-6]
	\arrow[from=4-7, to=3-6]
	\arrow[from=2-9, to=2-7]
	\arrow[from=4-9, to=2-7]
	\arrow[from=2-9, to=4-7]
	\arrow[from=2-9, to=3-6]
	\arrow[from=4-9, to=3-6]
\end{tikzcd}\]
These admit a cyclic bijection (perform a Whitney twist on an arch with tips $a$ and $b$), but we cannot choose the $\tau$ of the question to be orientation preserving. To see this, observe that every edge in the copy of $K_5$ within $G$ is in an oriented cycle, but the copy of $K_5$ in $H$ is acyclic. For such a $\tau$ to exist would imply that after a sequence of Whitney moves on $G$, a cycle could be introduced to the copy of $K_5$. This is impossible, for any arc either includes all or none of the edges in any subgraph which is a copy of $K_5$.
\end{example}

\section{Preservation of the discrete theta divisor} \label{preservation of the theta divisor}

In this section, we begin by describing \textit{rigidity}, a property of morphisms in $\orCyc$. Our goal is \cref{extended torelli}, which provides several equivalent characterizations of rigidity. In particular, it says that rigidity is equivalent to both preservation of the discrete theta divisor and preservation of the first Abel-Jacobi image. This latter criterion will eventually put us in a position to describe the consequences of rigidity in terms of graph structures.

The examples of \cref{examples} are intended to be companions to this section and \cref{Geometric interpretation and the main result}. The reader is encouraged to refer to them early and often.

\subsection{Preliminary Constructions and the First Characterization of Rigidity} \label{preliminary constructions}

In order to define rigidity, we first need to describe an orientation analogue of $\vphi_*$.

\begin{defn} \label{dfn: signs of morphisms}
Given a morphism $\vphi: (\hat G, \Gamma, \hat e) \to (\hat H, \Gamma', \hat w)$ in $\orCyc$, let $\vphi_{\mathbb O}: \mathbb O(G) \to \mathbb O(H)$ be defined as follows. Given an orientation $U \in \mathbb O(G)$ and $r \in E(G)$, let $\sgn_U(r) = 1$ if $r$ is oriented the same way in $U$ and $\Gamma$, and let $\sgn_U(r) = -1$ otherwise. Note that with this notation, a simple cycle $C$ in $\hat G$, say $e_1, \dots, e_n$, is an oriented cycle in $U$ if and only if $\alpha(C) = \sum_i \sgn_U(e_i) h_{e_i}$.  

Let $\vphi_{\mathbb O}(U)$ be the orientation on $H$ where for each $r \in E(G)$, we have $\sgn_{\vphi_{\mathbb O}(U)}(r) = \sgn_\vphi(r)\sgn_U(r)$. This construction is functorial on $\orCyc$. We will show in \cref{well defined orientation pushforward} that $\vphi_{\mathbb O}$ always descends to $\overline{\mathbb O}(G) \to \overline{\mathbb O}(H)$, and we will also denote this function by $\vphi_{\mathbb O}$.

Finally, if $U$ is a partial orientation, we extend the above definitions by setting $\sgn_U(r) = 0$ exactly when $r$ is unoriented. In this way $\vphi_{\mathbb O}$ also extends to partial orientations.
\end{defn}

Algebraically, this is straightforward to compute once $\vphi_*$ is known. If we regard $U$ as being encoded by vector $v$ with entries $\pm 1$ in $C^1(G, \R)$, then $\vphi_{\mb O}(U)$ is the orientation encoded by $\vphi_*(v)$.

\begin{example} \label{computation of pushforward extended}
	We will show how $\vphi_{\mb O}$ works in the case of \cref{computation of pushforward}. Recall that we had a cyclic bijection defined by $e_i \mapsto r_i$, with the following objects.
\[\begin{tikzcd}
	& \bullet && \bullet && \bullet && \bullet \\
	{(\hat G, \Gamma_1, \hat e_1)} &&&&&&&& {(\hat H, \Gamma_2, \hat r_1)} \\
	& \bullet && \bullet && \bullet && \bullet \\
	&& \bullet &&&& \bullet
	\arrow["{e_3}"{description}, from=1-4, to=1-2]
	\arrow["{e_4}"{description}, curve={height=12pt}, from=1-2, to=3-2]
	\arrow["{e_5}"{description}, curve={height=-12pt}, from=1-2, to=3-2]
	\arrow["{e_1}"{description}, curve={height=-12pt}, from=3-4, to=1-4]
	\arrow["{e_2}"{description}, curve={height=-12pt}, from=1-4, to=3-4]
	\arrow["{e_7}"{description}, from=3-4, to=4-3]
	\arrow["{e_6}"{description}, from=3-2, to=4-3]
	\arrow["{r_7}"{description}, from=1-8, to=1-6]
	\arrow["{r_2}"{description}, curve={height=-12pt}, from=1-6, to=3-6]
	\arrow["{r_6}"{description}, from=3-6, to=4-7]
	\arrow["{r_3}"{description}, from=4-7, to=3-8]
	\arrow["{r_4}"{description}, curve={height=-12pt}, from=3-8, to=1-8]
	\arrow["{r_5}"{description}, curve={height=-12pt}, from=1-8, to=3-8]
	\arrow["{r_1}"{description}, curve={height=-12pt}, from=3-6, to=1-6]
\end{tikzcd}\]
We computed that $\sgn_{\vphi}(e_i) = -1$ when $i$ is $3, 4$, or $6$, and that $\sgn_{\vphi}(e_i) = 1$ otherwise. Let $U$ be the following orientation on $\hat G$.

\[\begin{tikzcd}
	\bullet && \bullet \\
	\\
	\bullet && \bullet \\
	& \bullet
	\arrow["{e_3}"{description}, from=1-3, to=1-1]
	\arrow["{e_4}"{description}, curve={height=-12pt}, from=3-1, to=1-1]
	\arrow["{e_5}"{description}, curve={height=-12pt}, from=1-1, to=3-1]
	\arrow["{e_1}"{description}, curve={height=12pt}, from=1-3, to=3-3]
	\arrow["{e_2}"{description}, curve={height=-12pt}, from=1-3, to=3-3]
	\arrow["{e_7}"{description}, from=3-3, to=4-2]
	\arrow["{e_6}"{description}, from=4-2, to=3-1]
\end{tikzcd}\]
We have $\sgn_{U}(e_i) = -1$ when $i$ is $1, 4$ or $6$, and $\sgn_{U}(e_i) = 1$ otherwise. It follows that $\sgn_{\vphi(U)}(r_i) = -1$ when $i$ is $1, 3$ or $6$, and $\sgn_{\vphi(U)}(r_i) = 1$ otherwise.
\[\begin{tikzcd}
	\bullet && \bullet \\
	\\
	\bullet && \bullet \\
	& \bullet
	\arrow["{r_7}"{description}, from=1-3, to=1-1]
	\arrow["{r_2}"{description}, curve={height=-12pt}, from=1-1, to=3-1]
	\arrow["{r_6}"{description}, from=4-2, to=3-1]
	\arrow["{r_3}"{description}, from=3-3, to=4-2]
	\arrow["{r_4}"{description}, curve={height=-12pt}, from=3-3, to=1-3]
	\arrow["{r_5}"{description}, curve={height=-12pt}, from=1-3, to=3-3]
	\arrow["{r_1}"{description}, curve={height=12pt}, from=1-1, to=3-1]
\end{tikzcd}\]
To see what this is doing, take a simple cycle $C$ in $G$. Proceed along it in the direction agreeing with an edge $\ell$. If we follow $\vphi(C)$ in the direction agreeing with $\vphi(\ell)$, then $\vphi(C)$ agrees with an edge $\vphi(e_i)$ precisely when $C$ agrees with $e_i$. The next result is a useful form of this observation.
\end{example}

\begin{lemma} 
	Let $\vphi: (G, \Gamma, e) \to (H, \Gamma', w)$ be a morphism in $\orCyc$ and $U$ be a partial orientation on $G$. Then the image of any oriented cycle in $U$ under $\vphi$ is an oriented cycle in $\vphi_{\mathbb O}(U)$. 
	\end{lemma}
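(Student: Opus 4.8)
The plan is to reduce everything to the identity $\vphi_*(\alpha(C)) = \pm\,\alpha(\vphi(C))$ for a simple cycle $C$, which converts the combinatorial claim about orientations into the already-established behaviour of the pushforward $\vphi_*$. First I would fix notation. Since a cyclic bijection takes simple unordered cycles to simple unordered cycles but need not respect adjacency, the meaningful case is that of a simple (consistently) oriented cycle, so let $C = e_1,\dots,e_n$ be a simple cycle that is oriented in $U$, and let $\vphi(C)$ denote the simple unordered cycle in $H$ formed by $\vphi(e_1),\dots,\vphi(e_n)$. Writing $s_i$ for the sign with which the traversal of $C$ crosses $e_i$ relative to $\Gamma$, the statement that $C$ is oriented in $U$ is exactly the condition $s_i = \sgn_U(e_i)$ for every $i$, so that
\[
\alpha(C) \;=\; \sum_i s_i\, e_i^* \;=\; \sum_i \sgn_U(e_i)\, e_i^* \;\in\; H^1(G,\Z).
\]

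Next I would apply $\vphi_*$. Since $\vphi_*(e_i^*) = \sgn_\vphi(e_i)\vphi(e_i)^*$, we get $\vphi_*(\alpha(C)) = \sum_i \sgn_U(e_i)\sgn_\vphi(e_i)\,\vphi(e_i)^*$, and by the very definition of $\vphi_{\mathbb O}$ the coefficient $\sgn_U(e_i)\sgn_\vphi(e_i)$ is precisely $\sgn_{\vphi_{\mathbb O}(U)}(\vphi(e_i))$. Thus $\vphi_*(\alpha(C)) = \sum_i \sgn_{\vphi_{\mathbb O}(U)}(\vphi(e_i))\,\vphi(e_i)^*$, a $\pm1$-vector supported exactly on the edges of the simple unordered cycle $\vphi(C)$. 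The crux is then to identify this vector with an algebraic cycle: I claim $\vphi_*(\alpha(C)) = \epsilon\,\alpha(\vphi(C))$ for some global sign $\epsilon \in \{-1,1\}$, where $\alpha(\vphi(C)) = \sum_i t_i \vphi(e_i)^*$ is the algebraic cycle of $\vphi(C)$ traversed in a chosen direction. By \cref{functoriality}, $\vphi_*(\alpha(C))$ lies in $H^1(H,\Z)$; and the only $\pm1$-vectors supported on the edges of a simple cycle that lie in $H^1(H,\R)$ are $\pm\alpha(\vphi(C))$. To see this last point I would pair such a vector against the vertex-star cuts $\delta_{v}$ (which are orthogonal to $H^1$): at each vertex visited by the simple cycle exactly two of the cycle's edges meet, so orthogonality to $\delta_v$ forces a sign relation between the two coefficients at that vertex, and running around the cycle pins down all coefficients up to one overall sign — exactly the two directions of $\alpha(\vphi(C))$.

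Finally I would read off the conclusion. From $\sgn_{\vphi_{\mathbb O}(U)}(\vphi(e_i)) = \epsilon\, t_i$, traversing $\vphi(C)$ in the direction for which the $\Gamma'$-crossing signs are $\epsilon\,t_i$ (the chosen direction if $\epsilon = 1$, its reverse otherwise) crosses each $\vphi(e_i)$ with $\Gamma'$-sign equal to $\sgn_{\vphi_{\mathbb O}(U)}(\vphi(e_i))$, i.e.\ in the direction that $\vphi_{\mathbb O}(U)$ orients it. Hence $\vphi(C)$ is an oriented cycle in $\vphi_{\mathbb O}(U)$. I expect the main obstacle to be the key identity $\vphi_*(\alpha(C)) = \pm\,\alpha(\vphi(C))$: although it is essentially the content of the proof of \cref{functoriality}, making it rigorous here requires the short vertex-star uniqueness argument above, and one must be slightly careful that the statement only makes clean sense for simple cycles, since the image of a non-simple oriented cycle under a cyclic bijection need not be a cycle at all.
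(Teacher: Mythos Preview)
Your proposal is correct and follows essentially the same route as the paper: write $\alpha(C)=\sum_i \sgn_U(e_i)\,e_i^*$, push forward via $\vphi_*$ to obtain $\sum_i \sgn_{\vphi_{\mathbb O}(U)}(\vphi(e_i))\,\vphi(e_i)^*$, and then identify this with $\pm\alpha(\vphi(C))$ to conclude. The paper simply asserts the last identification (``$\vphi_*$ takes algebraic cycles to algebraic cycles, so proceeding along $\vphi(C)$ in some direction must yield \dots''), whereas you supply the vertex-star orthogonality argument to pin down the $\pm1$-vector; this extra detail is sound and makes the step airtight, but it is not a different approach.
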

	
	\begin{proof}
	Suppose $C$ is a simple oriented cycle in $U$. Let $\alpha(C) = \sum_i \sgn_U(e_i) h_{e_i}$. Then \begin{align*}
		\vphi_*(\alpha(C)) &= \sum_i \sgn_\vphi(e_i)\sgn_U(e_i) h_{\vphi(e_i)} \\
		&= \sum_i \sgn_{\vphi_{\mathbb O}(U)}(\vphi(e_i)) h_{\vphi(e_i)}
	\end{align*} 
	By definition $\vphi_*$ takes algebraic cycles to algebraic cycles, so proceeding along $\vphi(C)$ in some direction must yield a cycle $C'$ with $\alpha(C') = \sum_i \sgn_{\vphi_{\mathbb O}(U)}(\vphi(e_i)) h_{\vphi(e_i)}$, as desired.
	\end{proof}
	
	\begin{cor} \label{preservation of oriented cycles}
	Any $\vphi_{\mathbb O}$ yields bijections $\mc N_{\mathbb O}(G) \cong \mc N_{\mathbb O}(H)$ and $\mc S_{\mathbb O}(G) \cong \mc S_{\mathbb O}(H)$.
	\end{cor}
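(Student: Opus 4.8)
The plan is to reduce both assertions to the single observation that $\vphi_{\mathbb O}$ preserves acyclicity of an orientation, which the preceding lemma is exactly designed to supply. First I would record that $\vphi_{\mathbb O}$ is a bijection $\overline{\mathbb O}(G) \to \overline{\mathbb O}(H)$: since every morphism of $\orCyc$ is invertible and $(-)_{\mathbb O}$ is functorial (\cref{dfn: signs of morphisms}), the map $(\vphi\inv)_{\mathbb O}$ is a two-sided inverse of $\vphi_{\mathbb O}$. Moreover, as $\mc S_{\mathbb O}(G)$ and $\mc N_{\mathbb O}(G)$ are the preimages under the Chern class map of $\mc S(G)$ and $\mc N(G)$, and the latter partition $\Pic^{g-1}_{\dee}(G)$, the sets $\mc S_{\mathbb O}(G)$ and $\mc N_{\mathbb O}(G)$ partition $\overline{\mathbb O}(G)$ (likewise for $H$). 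It therefore suffices to show that $\vphi_{\mathbb O}$ restricts to a bijection $\mc N_{\mathbb O}(G) \to \mc N_{\mathbb O}(H)$; the statement for $\mc S_{\mathbb O}$ then follows by passing to complements, $\vphi_{\mathbb O}$ being a bijection of the ambient sets.

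Next I would invoke the combinatorial description of $\mc N_{\mathbb O}$. Since $\mc N(G)$ consists exactly of the nonspecial classes, i.e.\ those with no effective representative, \cref{geometric effectiveness} says that a class lies in $\mc N_{\mathbb O}(G)$ if and only if it has an acyclic representative. The key step is then to check that $\vphi_{\mathbb O}$ carries acyclic orientations to acyclic orientations. Given an acyclic $U_0 \in \mathbb O(G)$, suppose for contradiction that $\vphi_{\mathbb O}(U_0)$ contained an oriented cycle; applying the preceding lemma to $\vphi\inv$ and using functoriality $(\vphi\inv)_{\mathbb O}\,\vphi_{\mathbb O} = \id$, this oriented cycle would map to an oriented cycle in $(\vphi\inv)_{\mathbb O}(\vphi_{\mathbb O}(U_0)) = U_0$, contradicting acyclicity. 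Thus $\vphi_{\mathbb O}(U_0)$ is acyclic, and the symmetric argument (exchanging the roles of $\vphi$ and $\vphi\inv$) gives the converse, so $U_0$ is acyclic if and only if $\vphi_{\mathbb O}(U_0)$ is.

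Combining the two, a class $[U] \in \mc N_{\mathbb O}(G)$ with acyclic representative $U_0$ has image $\vphi_{\mathbb O}([U]) = [\vphi_{\mathbb O}(U_0)]$ with the acyclic representative $\vphi_{\mathbb O}(U_0)$, hence $\vphi_{\mathbb O}([U]) \in \mc N_{\mathbb O}(H)$; running this with $\vphi\inv$ gives the reverse inclusion, so $\vphi_{\mathbb O}$ restricts to the desired bijection. The only real obstacle is bookkeeping: the cycle-transport lemma is phrased in one direction, so obtaining the two-sided equivalence ``acyclic if and only if acyclic'' requires feeding the lemma the inverse morphism and appealing to functoriality of $(-)_{\mathbb O}$. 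No estimate or new construction is needed; all the mathematical content sits in \cref{geometric effectiveness} and the preceding lemma.
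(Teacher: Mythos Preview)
Your argument is correct and is essentially the derivation the paper leaves implicit: use the preceding lemma (applied to both $\vphi$ and $\vphi^{-1}$) to see that $\vphi_{\mathbb O}$ preserves acyclicity, combine with the characterization of $\mc N_{\mathbb O}$ in terms of acyclic representatives, and pass to complements for $\mc S_{\mathbb O}$. The only point worth flagging is that your appeal to $\vphi_{\mathbb O}$ being a bijection on $\overline{\mathbb O}$ relies on the well-definedness proved later in \cref{well defined orientation pushforward}; this is a forward reference the paper itself makes in \cref{dfn: signs of morphisms}, and there is no circularity since that corollary depends only on the computation in \cref{diagram rigidity}, not on the present statement.
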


Given the connection between orientations and divisors, there ought to be some connection between $\vphi_{\mathbb O}$ and $\vphi_*$. In order to describe it, we first extend the definition of the Picard group to the continuous case.

\begin{defn}
Let $(\hat G, \hat e)$ be a based directed graph. We let $\Pic(G) := J(G) \times \Z$. We have a sequence consisting of an isomorphism followed by two inclusions. \[ \Pic^0_{\dee}(G) \to J_{\dee}(G) \to J(G) \to \Pic(G) \] 
We wish to extend the composition $\Pic^0_{\dee}(G) \to \Pic(G)$, which we will temporarily denote by $\iota'$, to a map $\iota_{\hat e}: \Pic_{\dee}(G) \to \Pic(G)$. We set \[\iota_{\hat e}(D) = (\iota'(D - n t(\hat e)), n) \] 
As the subscript suggests, this depends on $\hat e$. \end{defn}

\begin{remark} \label{discrete is only obstacle}
    We now have enough terminology to outline our general strategy. Since the Chern class of an orientation sits in $\Pic^{g-1}_{\dee} (G)$, we have linked $\vphi_{\mathbb O}$ and $(\vphi_* \times \id_Z): J(G)_{\dee} \times \{g-1\} \to J(H)_{\dee} \times \{g-1\}$. Suppose we have a morphism $\vphi: (G, \Gamma, e) \to (H, \Gamma', w)$ in $\orCyc$. Then we can consider the following diagram.

\[\begin{tikzcd}
	{\overline{\mathbb O}(G)} && {\overline{\mathbb O}(H)} \\
	{\Pic^{g-1}_{\dee} (G)} && {\Pic^{g-1}_{\dee} (H)} \\
	{J_{\dee}(G) \times \{g-1\}} && {J_{\dee}(H) \times \{g-1\}}
	\arrow["{\vphi_{\mathbb O}}", from=1-1, to=1-3]
	\arrow["c", from=1-3, to=2-3]
	\arrow["{\iota_{\hat w}}", from=2-3, to=3-3]
	\arrow["c"', from=1-1, to=2-1]
	\arrow["{\iota_{\hat e}}"', from=2-1, to=3-1]
	\arrow["{\vphi_*}"', from=3-1, to=3-3]
\end{tikzcd}\]

\Cref{geometric effectiveness}, along with the definitions, shows that following sourceless orientations from $\overline{\mathbb O}(G)$ down the left side (but not across) yields exactly the elements of $\theta_{\hat e} \times \{g-1\} \subset J_{\dee}(G) \times \{g-1\}$. On the other hand combining \cref{geometric effectiveness} with \cref{preservation of oriented cycles} shows that following the sourceless orientations from $\overline{\mathbb O}(G)$ along the top and then down the right side of the diagram yields exactly the elements of $\theta_{\hat w} \times \{g-1\} \subset J_{\dee}(H) \times \{g-1\}$. Motivated by this, we will characterize preservation of the theta divisor by $\vphi_*$ in terms of commutativity or failure of commutativity of this diagram.
\end{remark}

\begin{defn} \label{rigidity definition}
Let $\vphi: (G, \Gamma, e) \to (H, \Gamma', w)$ be a morphism in $\orCyc$. Consider the following diagram.

\[\begin{tikzcd}
	{\mathbb O(G)} && {\mathbb O(H)} \\
	{\Pic^{g-1}_{\dee} (G)} && {\Pic^{g-1}_{\dee} (H)} \\
	{J_{\dee}(G) \times \{g-1\}} && {J_{\dee}(H) \times \{g-1\}}
	\arrow["{\vphi_{\mathbb O}}", from=1-1, to=1-3]
	\arrow["c", from=1-3, to=2-3]
	\arrow["{\iota_{\hat w}}", from=2-3, to=3-3]
	\arrow["c"', from=1-1, to=2-1]
	\arrow["{\iota_{\hat e}}"', from=2-1, to=3-1]
	\arrow["{\vphi_*}"', from=3-1, to=3-3]
\end{tikzcd}\]
We call $\vphi$ \textit{rigid} if this diagram commutes.
\end{defn}

Because we have not yet proven that $\vphi_{\mb O}: \overline{\mathbb O}(G) \to \overline{\mathbb O}(H)$ is well defined, the above definition uses $\mathbb O(G)$ and $\mathbb O(H)$ instead of $\overline{\mathbb O}(G)$ and $\overline{\mathbb O}(H)$. Once this is proven the choice will be of no significance. 

An alternative characterization would be that rigid morphisms are the morphisms of the largest subcategory of $\orCyc$ for which $\iota \circ c$ is a natural transformation $\mb O(-) \Rightarrow J_{\dee}(-) \times \{g-1\}$.

\begin{defn}
Given a morphism $\vphi: (G, \Gamma, e) \to (H, \Gamma', w)$ in $\orCyc$, we define the \textit{rigidity divisor} $\mc E_\vphi \in J_{\dee}(H)$ by \[\mc E_\vphi := \vphi_*\iota_{\hat e}c(\Gamma) - \iota_{\hat w} c(\Gamma') + \sum_{\sgn_\vphi(\ell) = -1} h_{\vphi(\ell)} \] 
Given $X = \{e_1, e_2, \dots, e_r \} \subseteq E(G)$, we further define the \textit{lowering divisor} $\mathbb L_X \in J_{\dee}(G)$ by \[\mathbb L_X := \sum_i (P_{t (\vphi (e_i))} - \vphi_*(P_{t(e_i)}))\]
\end{defn}

We will show in \cref{diagram rigidity} that taking any orientation on $G$, following it along the two possible paths to $\Pic_{\dee}^{g-1}(H)$ in \cref{rigidity definition}, and taking the difference of the two results yields $\mc E_\vphi$. Deorienting edges (or equivalently biorienting them, cf \cref{deorient biorient}) is useful to us and appears in the literature. Doing so introduces further error beyond that indicated by the rigidity divisor, the amount of which is quantified by the lowering divisor.

We are now in a position to state our current goal, which we will prove in \cref{proof of rigidity equivalences}. 

\begin{theorem} \label{extended torelli}
Let $\vphi: (\hat G, \hat e) \to (\hat H, \hat w)$ be a morphism in $\orCyc$ between graphs of genus at least two. Then the following are equivalent. \begin{enumerate}
    \item $\vphi$ is rigid.
    \item $\mc E_\vphi \sim 0$.
    \item $\vphi_*(\theta_{\hat e}) = \theta_{\hat w}$.
    \item $\vphi_*(\im S_{\hat e}^1) = \im S_{\hat w}^1$.
\end{enumerate}
\end{theorem}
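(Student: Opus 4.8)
The plan is to prove the four conditions equivalent by establishing the cycle of implications $(1)\Leftrightarrow(2)$, $(1)\Rightarrow(3)$, $(3)\Rightarrow(4)$, and $(4)\Rightarrow(1)$, leaning heavily on the diagram of \cref{rigidity definition} and the promised computation (referenced as \cref{diagram rigidity}) that the rigidity divisor $\mc E_\vphi$ measures exactly the failure of that diagram to commute. The equivalence $(1)\Leftrightarrow(2)$ should be essentially formal: since the two composites from $\mb O(G)$ to $\Pic^{g-1}_{\dee}(H)$ differ by the constant $\mc E_\vphi$ (independent of the chosen orientation), the diagram commutes if and only if $\mc E_\vphi\sim 0$. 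I would verify the claim that the discrepancy is orientation-independent by the cocycle-type bookkeeping of signs already set up in \cref{dfn: signs of morphisms}, namely that $\sgn_{\vphi_{\mb O}(U)}(r)=\sgn_\vphi(r)\sgn_U(r)$, so that changing $U$ changes both sides of the diagram compatibly.

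For $(1)\Rightarrow(3)$ I would invoke the observation of \cref{discrete is only obstacle}: pushing the sourceless orientation classes down the left side of the diagram produces exactly $\theta_{\hat e}\times\{g-1\}$, while pushing them across the top via $\vphi_{\mb O}$ and then down the right side produces exactly $\theta_{\hat w}\times\{g-1\}$. This uses \cref{geometric effectiveness} to identify sourceless orientations with effective (i.e.\ special) Chern classes, and \cref{preservation of oriented cycles} to know that $\vphi_{\mb O}$ carries $\mc S_{\mb O}(G)$ bijectively onto $\mc S_{\mb O}(H)$. Commutativity of the diagram then forces $\vphi_*(\theta_{\hat e})=\theta_{\hat w}$, after accounting for the shift by $(g-1)t(\hat e)$ built into $\iota_{\hat e}$ versus $\iota_{\hat w}$; I expect the bookkeeping of these basepoint shifts to be the first place where care is needed.

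The implication $(3)\Rightarrow(4)$ is where the genus hypothesis $g\geq 2$ should enter decisively, and I anticipate this is the main obstacle. The idea is that the first Abel--Jacobi image $\im S_{\hat e}^1=\{P_v : v\in V(G)\}$ can be recovered intrinsically from the theta divisor $\theta_{\hat e}$, independently of the graph structure—this is the discrete analogue of recovering a curve from its theta divisor in the classical Torelli theorem. Concretely, one should be able to characterize the points $P_v$ as governing translation behavior of $\theta_{\hat e}$, for instance via an identity of the form $\theta_{\hat e}$ translated by $P_v$ (with suitable basepoint correction) relating the $(g-1)$-th and $g$-th Abel--Jacobi images, or by expressing $\im S_{\hat e}^1$ as a difference set or intersection built from $\theta_{\hat e}$. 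Since $\vphi_*$ is a group isomorphism commuting with such constructions, preservation of $\theta$ would then force preservation of $\im S^1$. This step presumably reproduces the core Torelli argument in the discrete setting, and the genus-two lower bound is exactly what guarantees $\theta_{\hat e}$ is large enough (a genuine ``divisor'') to pin down the base points; I would look to the technical lemma advertised for \cref{Preservation of the First Abel-Jacobi Image} to supply this.

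Finally, for $(4)\Rightarrow(1)$ I would argue that $\vphi_*$ preserving $\im S_{\hat e}^1$ means $\vphi_*$ sends each $P_v$ to some $P_{v'}$ in $H$, up to the appropriate constant; feeding this back into the definition of $\mc E_\vphi$ (which is built from the $h_{\vphi(\ell)}$ and the Chern classes $c(\Gamma)$, $c(\Gamma')$, both expressible through sums of $P$-values and edge projections) should show $\mc E_\vphi\sim 0$, hence rigidity by $(2)\Rightarrow(1)$. The delicate point here, as throughout, will be tracking the basepoint-dependent shifts and the sign contributions $\sum_{\sgn_\vphi(\ell)=-1}h_{\vphi(\ell)}$ so that the telescoping collapses cleanly; I expect the lowering divisor $\mb L_X$ and the relation between $P_{t(\vphi(e))}$ and $\vphi_*(P_{t(e)})$ to be the right tools for making this collapse explicit.
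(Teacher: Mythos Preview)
Your scheme of implications differs from the paper's, and two of your steps have genuine gaps.

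First, the logical structure. The paper proves $(1)\Leftrightarrow(2)$, $(1)\Leftrightarrow(3)$ (both directions), then $(4)\Rightarrow(3)$ (which is immediate, since $\theta_{\hat e}$ is the set of $(g-1)$-fold sums of elements of $\im S^1_{\hat e}$ and $\vphi_*$ is a group homomorphism), and finally $(3)\Rightarrow(4)$. You instead propose the cycle $(1)\Rightarrow(3)\Rightarrow(4)\Rightarrow(1)$. That would be fine if each step worked, but your two nontrivial steps are not right.

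For $(3)\Rightarrow(4)$, you hope to characterize $\im S^1_{\hat e}$ intrinsically from $\theta_{\hat e}$ via some difference-set or translation construction and then transport that characterization along $\vphi_*$. The paper does nothing of the sort, and such an intrinsic characterization is not available here (indeed, in the discrete setting $\theta$ is just a finite subset of $J_{\dee}$). The paper's argument is concrete and quite different: it first shows $\vphi_*(\im S^{g-2}_{\hat e})=\im S^{g-2}_{\hat w}$ by applying the diagram of \cref{diagram rigidity} with $X=\{e\}$ (one edge deoriented), using \cref{lowering compatibility} to kill the lowering divisor and \cref{same size AJ images} to match cardinalities. Then it argues by contradiction: if some $\vphi_*(S^1_{\hat e}(p))$ lay outside $\im S^1_{\hat w}$, \cref{extend to nonspecial} would produce an effective $T$ with $Q+T\in\mc N(H)$, while $p+D$ (with $D$ effective of degree $g-2$) is special, contradicting preservation of $\im S^{g-1}$. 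The partial-orientation machinery is essential here; your proposal does not touch it.

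For $(4)\Rightarrow(1)$, your plan of feeding the relation $\vphi_*(P_v)=P_{v'}$ back into the explicit formula for $\mc E_\vphi$ and hoping for telescoping does not work as stated: knowing that each $P_v$ maps to \emph{some} $P_{v'}$ gives no control over which one, so the sums in $\mc E_\vphi$ do not collapse. The actual content hiding here is the key lemma of \cref{Preservation of the First Abel-Jacobi Image}, namely \cref{theta divisor can't be translated onto itself}: no nonzero $D\in\Pic^0_{\dee}(H)$ satisfies $D+\mc S=\mc S$. The paper uses this for $(3)\Rightarrow(1)$: from the diagram one has $\vphi_*(\theta_{\hat e})=\theta_{\hat w}+\mc E_\vphi$, so preservation of $\theta$ forces $\mc E_\vphi+\mc S(H)=\mc S(H)$, whence $\mc E_\vphi\sim 0$. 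You misidentify the role of this lemma, expecting it to serve your $(3)\Rightarrow(4)$ step; in fact it is the missing ingredient that would close your cycle cleanly via $(4)\Rightarrow(3)\Rightarrow(1)$.
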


\begin{remark} \label{special bijections}
This is an opportune moment to make explicit how we can compute with the maps of the above diagram.

Covering $G$ with cycles yields the coefficients $\sgn_\vphi(\ell)$. These directly define $\vphi_{\mathbb O}$ and are sufficient to define $\vphi_*$ via $h_\ell \mapsto \sgn_\vphi(\ell)h_{\vphi(\ell)}$. 

The Chern class map is easy to compute. Its inverse $\OO$ is less friendly. The $\Pic^0_{\dee}(G)$ torsor structure on $\overline{\mathbb{O}}$ induced via the Chern class map has an accessible intepretation: given $[U]$ and vertices $p, q$, the set of vertices accessible from $p$ by oriented paths in $U$ either includes $q$ or has an oriented cut pointing toward it; in the second case we reverse the cut. Repeating this process yields an oriented path from $p$ to $q$. Then $p - q + [U]$ is the class of taking this orientation and reversing all the edges along the path. We can use this and $\OO(U) = c(U) - c(\Gamma) + [\Gamma]$ to describe $\OO(U)$. For an algorithmic description which extends to partial orientations see \cite{backman_riemann-roch_2017}.

The map $\iota_{\hat e}$ can be informally described as follows. For any $D$, arbitrarily pair each $-1$ chip $q_i$ of $D - \deg(D)t(\hat e)$ with a $+1$ chip $p_i$. Choose any path $P_i$ from $q_i$ to $p_i$. Then $\iota_{\hat e}(D) = (\sum_i \alpha(P_i), \deg D)$. On the other hand, $\iota_{\hat e}\inv (\sum_{\ell} a_\ell h_\ell, k) = kt(\hat e) + \sum_\ell a_\ell (t(\ell) - o(\ell))$. In practice, computations in $\Pic(G)$ are often most easily done by applying $\iota_{\hat e}\inv$ and using discrete methods found in sources such as \cite{klivans_mathematics_2018} or \cite{baker_chip-firing_2013}.

When $D$ is effective, we have $\iota_{\hat e}(D) = S_{\hat e}^k(D)$, where on the right we regard $D$ as an element of $V(G)^{(k)}$. In particular, our setup is designed so that following the bijections $\mathbb O(G) \cong \Pic^{g-1}_{\dee} \cong J_{\dee}(G) \times \{g-1\}$ yields bijections $\mc S_{\mathbb O}(G) \cong \mc S(G) \cong \theta_{\hat e} \times \{g-1\}$, and likewise with their complements.
\end{remark}

The equivalence of the first two items in \cref{extended torelli} will follow from the next, more general result.

\begin{theorem} \label{diagram rigidity}

Let $\vphi: (G, \Gamma, e) \to (H, \Gamma', w)$ be a morphism in $\orCyc$. Let $X = \{e_1, e_2, \dots, e_r \} \subseteq E(G)$. Consider the following diagram.
\[\begin{tikzcd}
	{\mathbb O(G, X)} && {\mathbb O(H, \vphi(X))} \\
	{\Pic^{g - 1-r}_{\dee} (G)} && {\Pic^{g - 1-r}_{\dee} (H)} \\
	{J_{\dee}(G) \times \{g-1-r\}} && {J_{\dee}(H) \times \{g-1-r\}}
	\arrow["{\vphi_{\mathbb O}}", from=1-1, to=1-3]
	\arrow["c", from=1-3, to=2-3]
	\arrow["{\iota_{\hat w}}", from=2-3, to=3-3]
	\arrow["c"', from=1-1, to=2-1]
	\arrow["{\iota_{\hat e}}"', from=2-1, to=3-1]
	\arrow["{\vphi_*}"', from=3-1, to=3-3]
\end{tikzcd}\]
For any $U \in \mathbb O(G, X)$, we have $(\vphi_* )\iota_{\hat e}c(U) - \iota_{\hat w} c \vphi_{\mathbb O}(U) = \mc E_\vphi + \mathbb L_X$. 
\end{theorem}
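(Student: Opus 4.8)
The plan is to compute the difference $\vphi_*\iota_{\hat e}c(U) - \iota_{\hat w}c\vphi_{\mathbb O}(U)$ directly from the definitions, expanding everything into the common target $J_{\dee}(H)$ via the maps $\iota_{\hat w}$ and $\vphi_*$, and to show that the answer is independent of the choice of $U \in \mathbb O(G, X)$, equal to the constant $\mc E_\vphi + \mathbb L_X$. The key observation is that $c(U) = \sum_{\ell \text{ oriented}} t(\ell) - \sum_v v$ depends on $U$ only through the targets of its oriented edges, and that reversing an edge $\ell$ in $U$ relative to the fixed base orientation $\Gamma$ changes which endpoint is the target. So I first rewrite $c(U)$ in terms of $c(\Gamma)$ plus a correction coming from the edges on which $U$ and $\Gamma$ disagree, keeping careful track of the unoriented set $X$.

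First I would pass to the Jacobian. Applying $\iota_{\hat e}$ and then $\vphi_*$ to $c(U)$, and separately applying $\iota_{\hat w}$ to $c\vphi_{\mathbb O}(U)$, I expand both sides using the formulas from \cref{path from basepoint notation} and the definition of $\vphi_{\mathbb O}$ in \cref{dfn: signs of morphisms}. The crucial bookkeeping is that for an edge $\ell$ oriented in $U$, its contribution to $\iota_{\hat e}c(U)$ is (up to the base-edge shift absorbed by the degree grading) $P_{t(\ell)}$, where $t(\ell)$ is the target in $U$; after applying $\vphi_*$ this becomes $\vphi_*(P_{t_U(\ell)})$. On the right side, the target of $\vphi(\ell)$ in $\vphi_{\mathbb O}(U)$ is governed by $\sgn_{\vphi_{\mathbb O}(U)}(\vphi(\ell)) = \sgn_\vphi(\ell)\sgn_U(\ell)$, so its contribution to $\iota_{\hat w}c\vphi_{\mathbb O}(U)$ is $P_{t(\vphi(\ell))}$ computed with that sign. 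Taking the difference edge by edge, the $U$-dependence should cancel in pairs: the terms where $U$ agrees with $\Gamma$ reassemble into $\vphi_*\iota_{\hat e}c(\Gamma) - \iota_{\hat w}c(\Gamma')$, while the sign flips coming from edges $\vphi$ reverses produce exactly the correction $\sum_{\sgn_\vphi(\ell)=-1} h_{\vphi(\ell)}$, matching the definition of $\mc E_\vphi$.

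The remaining terms, which do not cancel, are precisely those indexed by the unoriented set $X$, since for $\ell \in X$ the edge contributes to neither Chern class but the two maps $\vphi_* \circ (\text{target via }\hat e)$ and $(\text{target via }\hat w)$ still differ; the residual is $\sum_i(P_{t(\vphi(e_i))} - \vphi_*(P_{t(e_i)}))$, which is by definition $\mathbb L_X$. I would organize this so that the verification reduces to a single identity at the level of a single oriented edge, then sum. The main obstacle I expect is the careful sign and basepoint accounting: one must confirm that the shifts $-nt(\hat e)$ and $-nt(\hat w)$ built into $\iota_{\hat e}$ and $\iota_{\hat w}$ interact correctly with $\vphi_*$ (which is \emph{not} basepoint-preserving in the naive sense, since $\vphi_*(P_v)$ need not equal $P_{\vphi(v)}$ — this mismatch is exactly what $\mathbb L_X$ and $\mc E_\vphi$ measure), and that the degree grading $g-1-r$ lines up on both sides. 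Once the single-edge identity $\vphi_*(P_{t_U(\ell)}) - P_{t_{\vphi_{\mathbb O}(U)}(\vphi(\ell))}$ is shown to equal the appropriate constant contribution plus, when $\ell$ is reversed, the term $h_{\vphi(\ell)}$, the global statement follows by linearity and summation over $E(G)$, with the $X$-indexed terms collected into $\mathbb L_X$.
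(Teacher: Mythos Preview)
Your proposal is correct and follows essentially the same approach as the paper: both rewrite $c(U)$ as $c(\Gamma)$ minus the contributions of unoriented edges plus the flips $o(\ell)-t(\ell)$ from edges where $U$ disagrees with $\Gamma$, push through $\iota_{\hat e}$ and $\vphi_*$, do the analogous expansion on the $\vphi_{\mathbb O}(U)$ side, and subtract. One organizational remark: the paper's bookkeeping is cleaner because it immediately converts each correction $o(\ell)-t(\ell)$ to $-h_\ell$ (which transforms by the simple rule $\vphi_* h_\ell = \sgn_\vphi(\ell)\,h_{\vphi(\ell)}$) rather than tracking $P_{t_U(\ell)}$ terms, whose images under $\vphi_*$ have no closed form; your single-edge identity will work, but it becomes transparent only after you make this substitution.
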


\begin{proof}
We have
\begin{align*}
    U &\xmapsto{c} \sum_{\sgn_U(\ell) = 1} t(\ell) + \sum_{\sgn_U(\ell) = -1} o(\ell) - \sum_{v \in V(G)} v \\
    &= \sum_{\sgn_U(\ell) = 1} t(\ell) + \sum_{\substack{\sgn_U(\ell) = -1 \\ \sgn_\vphi(\ell) = 1}} o(\ell) + \sum_{\substack{\sgn_U(\ell) = -1 \\ \sgn_\vphi(\ell) = -1}} o(\ell) - \sum_{v \in V(G)} v \\
    &= \left (\sum_{\ell \in E(G)} t(\ell) - \sum_{v \in V(G)} v \right)-  \sum_{\sgn_U(\ell) = 0} t(\ell)  + \sum_{\substack{\sgn_U(\ell) = -1 \\ \sgn_\vphi(\ell) = 1}}\left(o(\ell) - t(\ell) \right) + \sum_{\substack{\sgn_U(\ell) = -1 \\ \sgn_\vphi(\ell) = -1}}\left(o(\ell) - t(\ell) \right) \\
    &= c(\Gamma) -  \sum_{\sgn_U(\ell) = 0} t(\ell) + \sum_{\substack{\sgn_U(\ell) = -1 \\ \sgn_\vphi(\ell) = 1}}\left(o(\ell) - t(\ell) \right) + \sum_{\substack{\sgn_U(\ell) = -1 \\ \sgn_\vphi(\ell) = -1}}\left(o(\ell) - t(\ell) \right) \\
    &\xmapsto{\iota_{\hat e}} \iota_{\hat e} c(\Gamma) -  \sum_{\sgn_U(\ell) = 0} P_{t(\ell)} + \sum_{\substack{\sgn_U(\ell) = -1 \\ \sgn_\vphi(\ell) = 1}}-h_\ell + \sum_{\substack{\sgn_U(\ell) = -1 \\ \sgn_\vphi(\ell) = -1}} -h_\ell \\
    &\xmapsto{\vphi_* } (\vphi_* ) \iota_{\hat e} c(\Gamma) - \sum_{\sgn_U(\ell) = 0} \vphi_* P_{t(\ell)} + \sum_{\substack{\sgn_U(\ell) = -1 \\ \sgn_\vphi(\ell) = 1}}-h_{\vphi(\ell)} + \sum_{\substack{\sgn_U(\ell) = -1 \\ \sgn_\vphi(\ell) = -1}} h_{\vphi(\ell)}
\end{align*}
and
\begin{align*}
    U &\xmapsto{c \vphi_{\mathbb O}} \sum_{\sgn_U(\ell)\sgn_\vphi(\ell) = 1} t(\ell) + \sum_{\sgn_U(\ell)\sgn_\vphi(\ell) = -1} o(\ell) - \sum_{v \in V(H)} v \\ 
    &= \sum_{\ell \in E(G)} t(\vphi(\ell)) - \sum_{v \in V(H)} v -  \sum_{\sgn_U(\ell) = 0} t(\vphi (\ell)) + \sum_{\sgn_U(\ell)\sgn_\vphi(\ell) = -1} (o(\vphi(\ell)) - t(\vphi(\ell)))  \\
    &= c(\Gamma') -  \sum_{\sgn_U(\ell) = 0} t(\vphi (\ell)) + \sum_{\sgn_U(\ell)\sgn_\vphi(\ell) = -1} (o(\vphi(\ell)) - t(\vphi(\ell)))  \\
    &\xmapsto{\iota_{\hat w}} \iota_{\hat w} c(\Gamma')  - \sum_{\sgn_U(\ell)\sgn_\vphi(\ell) = -1} h_{\vphi(\ell)} -  \sum_{\sgn_U(\ell) = 0} P_{t(\vphi (\ell))}
\end{align*}
so
\begin{multline*}
    \vphi_* \iota_{\hat e}c(U) - \iota_{\hat w} c \vphi_{\mathbb O}(U) = (\vphi_* ) \iota_{\hat e} c(\Gamma) + \sum_{\substack{\sgn_U(\ell) = -1 \\ \sgn_\vphi(\ell) = 1}}-h_{\vphi(\ell)} + \sum_{\substack{\sgn_U(\ell) = -1 \\ \sgn_\vphi(\ell) = -1}} h_{\vphi(\ell)} - \\ \left(\iota_{\hat w} c(\Gamma') + \sum_{\sgn_U(\ell)\sgn_\vphi(\ell) = -1} -h_{\vphi(\ell)} \right) + \mathbb L_X
\end{multline*}
This is precisely
\begin{align*}
    \vphi_* \iota_{\hat e}c(U) - \iota_{\hat w} c \vphi_{\mathbb O}(U) &= (\vphi_* )\iota_{\hat e} c(\Gamma) - \iota_{\hat w} c(\Gamma') + \sum_{\sgn_\vphi(\ell) = -1} h_{\vphi(\ell)} + \mathbb L_X \\
    &= \mathcal E_{\vphi} + \mathbb L_X
\end{align*}
\end{proof}

\begin{cor} \label{lowering compatibility}
If $X$ is any (possibly empty) set of the edges $\hat e_i$ pointing to $t(\hat e)$, all of which map to edges pointing to $t(\hat w)$, then $\mathbb L_X = 0$. Consequently, for such an $X$ the following are equivalent: \begin{enumerate}
    \item The diagram commutes for every such $X$.
    \item The diagram commutes for some such $X$.
    \item $\mc E_\vphi \sim 0$.
\end{enumerate}
\end{cor}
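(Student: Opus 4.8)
The plan is to first prove the vanishing $\mathbb L_X = 0$ by unwinding the definition of the classes $P_v$, and then to read the three equivalences off directly from \cref{diagram rigidity}.

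For the vanishing, recall that $\mathbb L_X = \sum_i (P_{t(\vphi(e_i))} - \vphi_*(P_{t(e_i)}))$. The hypothesis on $X$ says precisely that $t(e_i) = t(\hat e)$ for each $i$ (the $e_i$ point to $t(\hat e)$) and $t(\vphi(e_i)) = t(\hat w)$ for each $i$ (their images point to $t(\hat w)$). I would then observe, from \cref{path from basepoint notation}, that $P_{t(\hat e)} = 0$ in $J_{\dee}(G)$: we are free to compute $P_{t(\hat e)}$ using the constant path from $t(\hat e)$ to itself, whose algebraic path is $0$, and the class is independent of this choice because any two paths from the basepoint differ by a cycle, which already lies in $H^1(G, \Z)$. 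By the same reasoning $P_{t(\hat w)} = 0$ in $J_{\dee}(H)$. Hence each summand equals $P_{t(\hat w)} - \vphi_*(P_{t(\hat e)}) = 0 - \vphi_*(0) = 0$, giving $\mathbb L_X = 0$.

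For the equivalences, I would invoke \cref{diagram rigidity}: for such an $X$ and any $U \in \mathbb O(G, X)$, the discrepancy between the two composites is
\[ \vphi_* \iota_{\hat e} c(U) - \iota_{\hat w} c \vphi_{\mathbb O}(U) = \mc E_\vphi + \mathbb L_X = \mc E_\vphi, \]
which is independent of $U$. Thus, for a fixed such $X$ with $\mathbb O(G, X) \neq \emptyset$, the diagram commutes if and only if $\mc E_\vphi = 0$ in $J_{\dee}(H)$, i.e.\ if and only if $\mc E_\vphi \sim 0$ under the canonical identification $\Pic^0_{\dee}(H) \cong J_{\dee}(H)$. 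Since the empty set is always such an $X$ and $\mathbb O(G, \emptyset) = \mathbb O(G)$ is nonempty, at least one genuinely testable such $X$ exists. The cycle (3)$\Rightarrow$(1)$\Rightarrow$(2)$\Rightarrow$(3) is then formal: if $\mc E_\vphi \sim 0$ the discrepancy vanishes for every such $X$, giving (1); (1) trivially gives (2); and if the diagram commutes for some such $X$ then $\mc E_\vphi = \mc E_\vphi + \mathbb L_X = 0$, giving (3).

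There is no deep obstacle here: the corollary is essentially bookkeeping once \cref{diagram rigidity} is available. The one point that I would take care to state correctly is that the two conditions placed on $X$ are exactly what collapse each $P_{t(e_i)}$ and each $P_{t(\vphi(e_i))}$ to the respective basepoint class, so that $\mathbb L_X$ vanishes termwise; after that the equivalences are immediate and require no further computation.
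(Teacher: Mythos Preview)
Your argument is correct and is precisely the unpacking the paper has in mind; the paper gives no separate proof here because the corollary is meant to be read off immediately from \cref{diagram rigidity}, exactly as you do. Your care in noting that $P_{t(\hat e)} = 0$ and $P_{t(\hat w)} = 0$ termwise, and that $X = \emptyset$ guarantees a nonempty $\mathbb O(G, X)$ so the equivalence is genuinely testable, is appropriate and matches the intended reasoning.
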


\begin{cor} \label{well defined orientation pushforward}
The map $\vphi_{\mathbb O}$ is well defined on $\overline{\mathbb O}(G, X)$ for any $X$.
\end{cor}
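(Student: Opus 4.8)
The plan is to read the claim off directly from the identity established in \cref{diagram rigidity}. The crucial observation is that, for a fixed morphism $\vphi$ and a fixed unoriented set $X$, the right-hand side $\mc E_\vphi + \mathbb L_X$ of that identity does \emph{not} depend on the chosen orientation $U \in \mathbb O(G, X)$: both $\mc E_\vphi$ and $\mathbb L_X$ are defined purely in terms of $\vphi$ and $X$. Rearranging the identity of \cref{diagram rigidity} therefore gives
\[
	\iota_{\hat w}\, c\, \vphi_{\mathbb O}(U) = \vphi_* \iota_{\hat e}\, c(U) - (\mc E_\vphi + \mathbb L_X),
\]
which exhibits $\iota_{\hat w} c\, \vphi_{\mathbb O}(U)$ as a function of $c(U)$ alone.

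Next I would recall that the equivalence defining $\overline{\mathbb O}(G, X)$ is, by construction, exactly equality of Chern classes in $\Pic^{g-1-r}_{\dee}(G)$. So suppose $U, U' \in \mathbb O(G, X)$ are equivalent, i.e.\ $c(U) = c(U')$. Since $\iota_{\hat e}$ and $\vphi_*$ are genuine functions on $\Pic_{\dee}(G)$ and $J_{\dee}(G)$ respectively, the displayed right-hand sides agree for $U$ and $U'$, whence $\iota_{\hat w} c\, \vphi_{\mathbb O}(U) = \iota_{\hat w} c\, \vphi_{\mathbb O}(U')$. Finally I would invoke injectivity of $\iota_{\hat w}$ restricted to the degree-$(g-1-r)$ piece: on a single graded part it is built from the canonical isomorphism $\Pic^0_{\dee}(H) \cong J_{\dee}(H)$ followed by inclusions, so it is injective there. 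Cancelling $\iota_{\hat w}$ yields $c\, \vphi_{\mathbb O}(U) = c\, \vphi_{\mathbb O}(U')$, which is precisely the statement that $\vphi_{\mathbb O}(U)$ and $\vphi_{\mathbb O}(U')$ represent the same class in $\overline{\mathbb O}(H, \vphi(X))$. Hence $\vphi_{\mathbb O}$ descends to a well-defined map on equivalence classes.

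As for where the difficulty lies: there is essentially no obstacle, because the real content has already been packaged into \cref{diagram rigidity}, whose statement is engineered so that the discrepancy term is independent of the orientation. The only points demanding a little care are the bookkeeping that equivalence of partial orientations means equality of Chern class (so that the argument rests on Chern classes rather than secretly re-running the move-based description of \cref{linear equivalence of orientations}), and the elementary injectivity of $\iota_{\hat w}$ on a fixed degree. Both are routine, so I expect the proof to be short.
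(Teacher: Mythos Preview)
Your proposal is correct and is essentially the same argument as the paper's: both apply \cref{diagram rigidity} to two orientations $U, U'$ with $c(U)=c(U')$, observe that the discrepancy term $\mc E_\vphi + \mathbb L_X$ is independent of the orientation, and conclude $\iota_{\hat w} c\,\vphi_{\mathbb O}(U) = \iota_{\hat w} c\,\vphi_{\mathbb O}(U')$, hence $c\,\vphi_{\mathbb O}(U) = c\,\vphi_{\mathbb O}(U')$. The only cosmetic difference is that the paper subtracts the two instances of the identity directly, whereas you rearrange first and then compare; you are also slightly more explicit about the (routine) injectivity of $\iota_{\hat w}$ on a fixed degree.
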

\begin{proof}
We need to show that if $[U] = [U'] \in \overline{\mathbb O}(G, X)$, or equivalently $c(U) = c(U')$, then $[\vphi_{\mathbb O}(U)] = [\vphi_{\mathbb O}(U')]$. We compute that \begin{align*}
    \iota_{\hat w} c(\vphi_{\mathbb O}(U)) - \iota_{\hat w} c(\vphi_{\mathbb O}(U')) &= ((\vphi_* \times \id_Z)\iota_{\hat e} c(U) - \mc E_\vphi - \mathbb L_X) - ((\vphi_* \times \id_Z)\iota_{\hat e} c(U') - \mc E_\vphi - \mathbb L_X) \\
    &= (\vphi_* \times \id_Z)\iota_{\hat e} c(U) - (\vphi_* \times \id_Z)\iota_{\hat e} c(U') \\
    &= 0
\end{align*}
as desired.
\end{proof}

\begin{cor}
Let $\vphi: (G, \Gamma, e) \to (H, \Gamma', w)$ and $\psi: (H, \Gamma, w) \to (I, \Gamma', \ell)$ be morphisms in $\orCyc$. Then $\mc E_{\psi\vphi} \sim \mc E_\psi + (\psi_* ) \mc E_\vphi$.
\end{cor}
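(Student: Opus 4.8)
The plan is to derive the relation entirely from \cref{diagram rigidity}, applying it three times — once to $\vphi$, once to $\psi$, and once to the composite $\psi\vphi$ — always with the excluded edge set $X$ taken to be empty. With $X=\emptyset$ the lowering divisor $\mb L_\emptyset$ is an empty sum and hence $0$, so \cref{diagram rigidity} collapses to the clean statement that, for any full orientation $U$ of the source, the rigidity divisor is the exact discrepancy between the two legs of the top square of the diagram in \cref{rigidity definition}:
\[
\vphi_*\,\iota_{\hat e}\,c(U) - \iota_{\hat w}\,c\,\vphi_{\mb O}(U) = \mc E_\vphi,
\]
and similarly for $\psi$ and for $\psi\vphi$. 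Since the right-hand side $\mc E_\vphi + \mb L_\emptyset$ is independent of $U$, so is the discrepancy, which frees me to choose the orientations fed into the three instances so that the intermediate terms telescope. A preliminary remark is that all three graphs share a common genus $g$: a cyclic bijection preserves the graphic matroid, hence its nullity $g=|E|-|V|+1$, so the degree in the $\Z$-factor of $\Pic(-)$ is the same throughout and cancels in every difference below, leaving honest elements of the discrete Jacobians.

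Concretely, fix any $U \in \mb O(G)$. Applying \cref{diagram rigidity} to $\vphi$ at $U$ and pushing the identity forward along the homomorphism $\psi_*$ gives
\[
\psi_*\vphi_*\,\iota_{\hat e}\,c(U) - \psi_*\,\iota_{\hat w}\,c\,\vphi_{\mb O}(U) = \psi_*\mc E_\vphi .
\]
Because $\vphi_{\mb O}$ carries full orientations to full orientations, $\vphi_{\mb O}(U) \in \mb O(H)$ is a legitimate input for \cref{diagram rigidity} applied to $\psi$, which yields
\[
\psi_*\,\iota_{\hat w}\,c\,\vphi_{\mb O}(U) - \iota_{\hat \ell}\,c\,\psi_{\mb O}\vphi_{\mb O}(U) = \mc E_\psi .
\]
Adding these two identities, the middle terms $\pm\,\psi_*\,\iota_{\hat w}\,c\,\vphi_{\mb O}(U)$ cancel, leaving
\[
\psi_*\vphi_*\,\iota_{\hat e}\,c(U) - \iota_{\hat \ell}\,c\,\psi_{\mb O}\vphi_{\mb O}(U) = \mc E_\psi + \psi_*\mc E_\vphi .
\]

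It remains to recognize the left-hand side as $\mc E_{\psi\vphi}$. By the functoriality of $(-)_*$ on $\orCyc$ from \cref{functoriality} we have $\psi_*\vphi_* = (\psi\vphi)_*$, and by the functoriality of $(-)_{\mb O}$ recorded in \cref{dfn: signs of morphisms} we have $\psi_{\mb O}\vphi_{\mb O} = (\psi\vphi)_{\mb O}$. Thus the left-hand side equals $(\psi\vphi)_*\,\iota_{\hat e}\,c(U) - \iota_{\hat \ell}\,c\,(\psi\vphi)_{\mb O}(U)$, which a third application of \cref{diagram rigidity}, now to $\psi\vphi$ at $U$, identifies with $\mc E_{\psi\vphi}$. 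Comparing gives $\mc E_{\psi\vphi} = \mc E_\psi + \psi_*\mc E_\vphi$, and this equality in $J_{\dee}(I)$ is exactly the asserted relation $\mc E_{\psi\vphi} \sim \mc E_\psi + \psi_*\mc E_\vphi$. I do not anticipate a genuine obstacle: the argument is just a telescoping cancellation expressing the cocycle nature of the rigidity divisor, and the only bookkeeping to watch is the matching of genera and degrees noted above, together with keeping the three basepoints $\hat e$, $\hat w$, $\hat \ell$ straight.
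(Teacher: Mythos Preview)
Your argument is correct and is exactly the diagram chase with $X=\emptyset$ that the paper indicates: apply \cref{diagram rigidity} to $\vphi$, to $\psi$ at $\vphi_{\mb O}(U)$, and to $\psi\vphi$, then telescope using the functoriality of $(-)_*$ and $(-)_{\mb O}$. Nothing is missing.
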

\begin{proof}
A diagram chase with $X = \emptyset$.
\end{proof}

\begin{remark} \label{deorient biorient}
One sometimes sees in the literature orientations in which edges are allowed to be bioriented (for example, the ``$1$-orientations'' of \cite{caporaso_combinatorics_2019}, in which one biorients a base edge), and the theory of break divisors \cite{baker-yao_bernardi-process_2017} can be interpreted in terms of orientations with a single bioriented edge. These setups can be more closely related to the above, and to the existing theory of partial orientations, in a way we now sketch. See Backman and Hopkin's work in \cite{backman2017fourientations} for related discussions.

When $m = g-1 + m'$ for $m' \geq 0$, let $\mathbb O^{m}(G)$ denote the set of orientations on $G$ for which exactly $m'$ edges are bioriented, and all the rest are oriented in the ordinary sense. The Chern class map can be taken in this case in the evident way, and we can quotient to obtain $\overline{\mathbb O}^m(G)$. We can regard $\mathbb O^{m}(-)$ as a functor on $\orCyc$, but to make $\overline{\mathbb O}^m(G)$ functorial would require a category with more data. 

We can interchange elements of $\mathbb O^{m}(G)$ and $\mathbb O^{g - 1 - m'}(G)$ by reversing each oriented edge, making unoriented edges be bioriented, and making bioriented edges be unoriented. Denote this interchange by $*$. This map is a natural isomorphism $\mathbb O^{m}(-) \cong \mathbb O^{g-1-m'}(-)$. One checks that we have $c(* U) = K - c(U)$, where $K$ is the canonical divisor. This makes it straightforward to dualize results concerning partial orientations to results concerning bioriented edges.
\end{remark}

\subsection{Preservation of the First Abel-Jacobi Image} \label{Preservation of the First Abel-Jacobi Image}

In this subsection we prove the following technical lemma, using standard methods of argument.

\begin{lemma} \label{theta divisor can't be translated onto itself}
Let $G$ be a $2$-connected and $2$-edge connected graph of genus at least two. Let $D \in \Pic^0_{\dee}(G)$ be nonzero. Then $D + \mc S \neq \mc S$.
\end{lemma}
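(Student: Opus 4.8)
The plan is to argue by contradiction: assuming $D + \mc S = \mc S$ for some nonzero $D$, I would exhibit a single divisor class forced to be simultaneously special and nonspecial. Since translation by $D$ is a bijection of $\Pic^{g-1}_{\dee}(G)$ preserving $\mc S(G)$, it also preserves the complementary set $\mc N(G)$; equivalently $\mc N(G) - D = \mc N(G)$. So it suffices to produce one nonspecial class $N$ for which $N - D$ admits an effective representative of degree $g-1$, since then $N - D$ would lie in $\mc S(G) \cap \mc N(G) = \emptyset$.

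To build such an $N$, fix any vertex $q$ and let $R$ be the $q$-reduced divisor representing $D$, so $R$ has degree $0$, $R(u) \ge 0$ for $u \ne q$, and $R(q) = -d$ with $d = \sum_{u \ne q} R(u)$; because $D \ne 0$ we have $d \ge 1$. I take $N$ to be the Chern class of a full orientation $O$ chosen so that $O$ is acyclic with unique source $q$ and in-degrees bounded below by $d^-_O(u) \ge R(u) + 1$ for every $u \ne q$. Recall that for a full orientation $c(O) = \sum_v (d^-_O(v) - 1)\, v \in \Pic^{g-1}_{\dee}(G)$, and that by \cref{geometric effectiveness} the class of $c(O)$ is nonspecial precisely because $O$ is acyclic. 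Setting $F := c(O) - R$, the in-degree bound gives $F(u) = (d^-_O(u) - 1) - R(u) \ge 0$ for $u \ne q$, while $F(q) = (0 - 1) - (-d) = d - 1 \ge 0$; hence $F \ge 0$ is effective of degree $g-1$, so $[F] \in \mc S(G)$. On the other hand $[F] = [c(O)] - D \in \mc N(G) - D = \mc N(G)$, which is the desired contradiction.

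The one genuinely nontrivial step is the construction of $O$, and this is exactly where the $q$-reduced hypothesis does the work, via Dhar's burning algorithm. I would run the burning process from $q$: since $R$ is $q$-reduced, every vertex eventually burns, producing an order $q = v_0, v_1, \dots, v_{n-1}$ in which each $v_i$ ignites. By the burning rule, at the instant $v_i$ ($i \ge 1$) ignites the number of edges joining it to the already-burnt set $\{v_0, \dots, v_{i-1}\}$ exceeds $R(v_i)$, hence is at least $R(v_i)+1$. Orienting every edge from its lower-indexed to its higher-indexed endpoint then yields an acyclic orientation (every directed edge raises the index, so no directed cycle exists), whose only source is $q$ and whose in-degrees satisfy $d^-_O(v_i) \ge R(v_i) + 1$. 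The supporting facts — uniqueness of the $q$-reduced representative and the burning characterization of reducedness — are classical and can be cited from \cite{baker_riemann-roch_2007}.

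Beyond that construction, I expect the only real risk to be sign-convention bookkeeping: matching the orientation of the Chern class map $c$, the $q$-reduced representative, and the direction of the translation action, so that $F$ comes out nonnegative at \emph{every} vertex, including the coefficient at $q$, which is precisely where the inequality $d \ge 1$ extracted from $D \ne 0$ is consumed. Notably this argument seems to use neither the genus hypothesis nor $2$-edge-connectivity, so before finalizing I would test it against a low-genus case such as a banana graph to confirm that no degeneracy has been overlooked and that the stated hypotheses are merely the paper's standing assumptions.
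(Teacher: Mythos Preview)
Your argument is correct and takes a genuinely different route from the paper's. The paper proceeds by first proving a structural lemma (\cref{first abel jacobi image can't be translated to itself}) that, under the $2$-connectivity and $2$-edge connectivity hypotheses, translation by a nonzero $D$ cannot permute the degree-one effective classes unless $G$ is a single cycle; this uses the boundary-counting estimates of \cref{high connectedness controls degree and boundary size}. It then invokes Backman's characterization (\cref{ineffective is dominated by acyclic}) and its corollary (\cref{extend to nonspecial}) to lift from degree one to degree $g-1$: find a vertex $v$ with $D+v$ ineffective, then an effective $T$ with $D+v+T$ nonspecial, so $v+T$ is special but its $D$-translate is not.

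Your approach sidesteps the connectivity analysis entirely. By running Dhar's burning process on the $q$-reduced representative $R$ of $D$, you manufacture in one stroke an acyclic orientation $O$ whose Chern class dominates $R$ coefficientwise, so that $c(O)-R$ is both effective (hence special) and a $D$-translate of a nonspecial class. This is more direct and, as you suspected, does not use the genus or $2$-edge-connectivity hypotheses; those enter the paper's argument only through \cref{first abel jacobi image can't be translated to itself}. What the paper's route buys is the auxiliary \cref{first abel jacobi image can't be translated to itself} itself, which is independently useful later, whereas your argument is a cleaner standalone proof of the lemma. One small remark: the burning characterization of $q$-reducedness is usually attributed to Dhar rather than \cite{baker_riemann-roch_2007}, though the latter is a fine reference for the equivalence with the Baker--Norine notion.
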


Note that $\mathcal S$ is a translation of any choice of discrete theta divisor. Translating $\mathcal S$ back into $\Pic^0_{\dee}(G)$ and setting $D$ to be a rigidity divisor will be a key step in characterizing rigidity in terms of preservation of the discrete theta divisor.

\begin{defn}
Let $(f)$ be a principal divisor on $G$. We define $M(f)$ to be the set of vertices of $G$ on which $f$ is maximal and $N(f)$ to be the set of edges on which $f$ is minimal. 
\end{defn}
\begin{defn}
Given a subset $X \subseteq V(G)$, let $\partial X$ be the smallest subgraph of $G$ containing all edges between vertices in $X$ and vertices not in $X$. For $x \in V(G)$, let $\partial_x X$ be the set of edges between $x$ and vertices not in $X$.
\end{defn}

For the next result and its first corollary, a dual result holds for $D'$ and $N(f)$, obtained by using the observation that $N(f) = M(-f)$.

\begin{prop}
Let $D', D \in \Div(G)$ be effective, and let $D' = (f) + D$ for $(f) \neq 0$. For all $x \in M(f)$, we have $D(x) \geq |\partial_x M(f)|$. 
\end{prop}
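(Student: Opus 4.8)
The plan is to argue directly at the level of the $x$-coefficient, exploiting effectivity of $D'$ and the fact that $x$ is a maximum of $f$. Writing $D = D' - (f)$ and recalling the paper's convention that $\Delta$ is the adjacency matrix minus the degree matrix, the principal divisor has coefficient
\[
(f)(x) = (\Delta f)(x) = \sum_{w \sim x}\bigl(f(w) - f(x)\bigr),
\]
where the sum runs over the edges incident to $x$, each parallel edge counted separately. Hence
\[
D(x) = D'(x) - (f)(x) = D'(x) + \sum_{w \sim x}\bigl(f(x) - f(w)\bigr),
\]
and since $D' \geq 0$ it suffices to show $\sum_{w \sim x}\bigl(f(x) - f(w)\bigr) \geq |\partial_x M(f)|$.

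The key step is to split this sum according to whether the neighbor across each edge lies in $M(f)$. Because $x \in M(f)$, the value $f(x)$ is the global maximum of $f$, so every summand $f(x) - f(w)$ is nonnegative; in particular I may discard the edges landing in $M(f)$, each of which contributes exactly $0$. The remaining edges are precisely those counted by $|\partial_x M(f)|$, namely the edges from $x$ to vertices not in $M(f)$; for each such edge $f(w) < f(x)$, so by integrality $f(x) - f(w) \geq 1$. Summing over these edges gives $\sum_{w \sim x}\bigl(f(x) - f(w)\bigr) \geq |\partial_x M(f)|$, and combining with $D'(x) \geq 0$ yields $D(x) \geq |\partial_x M(f)|$.

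I expect no genuine obstacle here; the argument is a one-line Laplacian computation once the bookkeeping is right. The only points demanding care are (i) the sign convention, so that a maximum of $f$ forces $(\Delta f)(x) \leq 0$, and (ii) counting the edges of $\partial_x M(f)$ with multiplicity, matching the per-edge terms of the Laplacian sum so that a multi-edge from $x$ to a single vertex outside $M(f)$ contributes once per edge on both sides. Note the hypothesis $(f) \neq 0$ is not actually used for the inequality—if $(f) = 0$ then $f$ is constant on the connected graph $G$, so $M(f) = V(G)$, $\partial_x M(f) = \emptyset$, and both sides vanish—but it is what renders the bound nonvacuous in the intended application.
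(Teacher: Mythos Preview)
Your proof is correct and is essentially the same argument as the paper's: both compute the $x$-coefficient of the Laplacian, use effectivity of $D'$ to get $-D(x) \leq (f)(x)$, drop the zero contributions from neighbors inside $M(f)$, and bound each remaining term by $-1$ via integrality. The only difference is cosmetic—you write $D(x) = D'(x) + \sum(f(x)-f(w))$ and bound from below, while the paper writes $-D(x) \leq (f)(x) \leq -|\partial_x M(f)|$.
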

\begin{proof}
Observe that \begin{align*}
    -D(x) \leq (f)(x) &= \sum_{\text{edges between }x\text{ and }x'} f(x') - \deg (x) \cdot f(x) \\
    &= \sum_{e \in \partial_x M(f)\text{ connected to } x' \neq x} f(x') - |\partial_xM(f)| \cdot f(x) \\
    & \leq -|\partial_xM(f)|
\end{align*}
as desired.
\end{proof}
\begin{cor} \label{high connectedness controls degree and boundary size}
Let $G$ be $k$-edge connected. Then in the above situation $\deg D \geq |E(\partial M(f))| \geq k$. Further, if $\deg D = k$, then $G$ is $k$-edge connective, $\deg D = |E(\partial M(f))|$, and for every vertex $x$ we have $D(x) = |\partial_x M(f)|$. In particular, the support of $D$ is exactly $V(\partial M(f))$.
\end{cor}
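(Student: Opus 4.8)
The plan is to sum the inequality of the preceding proposition over the vertices of $M(f)$ and then read information out of the cases of equality. The first ingredient is a double-counting identity: every edge of the cut $\partial M(f)$ joins a vertex of $M(f)$ to a vertex outside $M(f)$, so it has exactly one endpoint in $M(f)$, whence $\sum_{x \in M(f)} |\partial_x M(f)| = |E(\partial M(f))|$. Since $D$ is effective, discarding the nonnegative contributions of vertices outside $M(f)$ and then applying the proposition gives
\[ \deg D \;=\; \sum_{v \in V(G)} D(v) \;\geq\; \sum_{x \in M(f)} D(x) \;\geq\; \sum_{x \in M(f)} |\partial_x M(f)| \;=\; |E(\partial M(f))|. \]
To close the first chain I would note that $(f) \neq 0$ forces $f$ to be nonconstant, so $M(f)$ is a nonempty proper subset of $V(G)$; as $G$ is connected, $\partial M(f)$ is then a genuine nonempty edge cut, and $k$-edge connectivity yields $|E(\partial M(f))| \geq k$.

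For the refined statement I would assume $\deg D = k$ and trace which inequalities are forced to be equalities. Because $k = \deg D \geq |E(\partial M(f))| \geq k$, all three quantities coincide, giving $\deg D = |E(\partial M(f))| = k$. A cut of exactly $k$ edges shows the edge connectivity is at most $k$, and combined with the standing hypothesis that every cut has at least $k$ edges this makes $G$ exactly $k$-edge connective. The equality $\sum_{v} D(v) = \sum_{x \in M(f)} D(x)$ together with effectiveness forces $D(v) = 0$ for every $v \notin M(f)$, while the equality $\sum_{x \in M(f)} D(x) = \sum_{x \in M(f)} |\partial_x M(f)|$ of termwise inequalities $D(x) \geq |\partial_x M(f)|$ must then saturate term by term, giving $D(x) = |\partial_x M(f)|$ for each $x \in M(f)$.

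The computations are short, so the content lies entirely in the equality bookkeeping, and I expect the one delicate point to be the final identification of the support. The vertex-wise formula says $D(x) \neq 0$ precisely when $x \in M(f)$ meets the cut $\partial M(f)$, so $\supp D = V(\partial M(f)) \cap M(f)$; matching this to $V(\partial M(f))$ is where I would carefully check the conventions for $\partial M(f)$, since $D$ visibly vanishes off $M(f)$ even though some boundary vertices lie outside $M(f)$. The key structural feature to keep in view is that the two inequalities in the chain have different sources — one from discarding vertices off $M(f)$ (controlled by effectiveness), the other from the proposition (controlled termwise) — and it is their simultaneous saturation that pins down $D$ vertex by vertex. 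Finally, the dual assertion for $D'$ and $N(f) = M(-f)$ flagged before the proposition is immediate upon replacing $f$ by $-f$.
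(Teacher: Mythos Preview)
Your argument is correct and matches the paper's approach exactly: the paper's proof simply says to sum the proposition over $x \in M(f)$, invokes $k$-edge connectivity for the lower bound, and declares that ``the rest follows immediately,'' which is precisely the equality bookkeeping you spell out. Your caveat about the support is well taken: with the paper's definition of $\partial M(f)$ the vertex set $V(\partial M(f))$ contains endpoints on both sides of the cut, so what one literally obtains is $\supp D = V(\partial M(f)) \cap M(f)$, a minor imprecision in the statement rather than a gap in your proof.
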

\begin{proof}
One obtains $\deg D \geq |E(\partial M(f))|$ by using the proposition and summing across $x \in M$. If $|E(\partial M(f))| \geq k$ did not hold, we could remove all the edges in $\partial M(f)$ from $G$ and disconnect $G$. The rest follows immediately.
\end{proof}

\begin{remark}
One can interpret the above by thinking of values of $f$ as specifying pressure at each vertex, capable of pushing the `chips' of chip firing. Qualitatively, the above conditions on $D' = (f) + D$ say that the only way we can push an effective divisor $D$ to another effective divisor is by finding regions of the graph that $D$ seals off and pressurizing a selection of them. Quantitatively, they specify that $D$ requires one chip in order to seal off each outgoing edge from the region.
\end{remark}

\begin{lemma} \label{first abel jacobi image can't be translated to itself}
Let $G$ be a $2$-connected and $2$-edge connected graph and let $D \in \Pic^0_{\dee}(G)$ with $D \not \sim 0$. Suppose $v_i \mapsto v_{\sigma(i)}$ is a permutation of the vertices of $G$ such that $D + v_i \sim v_{\sigma(i)}$ for each $i$. Then $G$ is a single cycle.
\end{lemma}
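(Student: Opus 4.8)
The plan is to reduce everything to a purely local statement: I will show that \emph{every} vertex of $G$ has degree exactly two. Since $G$ is connected and $2$-edge connected (so has minimum degree at least two), this forces $G$ to be $2$-regular and connected, which is precisely the condition of being a single cycle. The engine for the local statement is the pair of results leading up to \cref{high connectedness controls degree and boundary size}, applied to a single well-chosen linear equivalence of degree two.

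First I would record that $D \not\sim 0$ makes $\sigma$ fixed-point free: if $\sigma(i) = i$ then $D \sim v_{\sigma(i)} - v_i = 0$, a contradiction. Now fix an arbitrary index $i$. Combining the hypothesis at $i$, namely $v_{\sigma(i)} \sim v_i + D$, with the hypothesis applied to the index $\sigma^{-1}(i)$, namely $v_i \sim v_{\sigma^{-1}(i)} + D$ and hence $v_{\sigma^{-1}(i)} \sim v_i - D$, I obtain the equivalence $v_{\sigma(i)} + v_{\sigma^{-1}(i)} \sim 2 v_i$ of effective divisors of degree two. Because $\sigma(i) \neq i$, the divisor $2v_i$ and the divisor $v_{\sigma(i)} + v_{\sigma^{-1}(i)}$ are genuinely distinct, so their difference is a nonzero principal divisor $(f)$. (The case $\sigma(i)=\sigma^{-1}(i)$, i.e.\ $D$ of order two, is no exception, since then the left-hand divisor is $2v_{\sigma(i)} \neq 2v_i$.)

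Next I would apply the preceding proposition together with \cref{high connectedness controls degree and boundary size}, taking $k = 2$, the subtracted divisor to be $D = 2v_i$, and $D' = v_{\sigma(i)} + v_{\sigma^{-1}(i)}$, so that $(f) = D' - D \neq 0$. Writing $M = M(f)$, the corollary gives that $\partial M$ is a cut with $|E(\partial M)| \geq 2$, and the proposition gives $|\partial_x M| \leq D(x)$ for each $x \in M$. Since $D = 2v_i$ is supported only at $v_i$, every vertex of $M$ other than $v_i$ contributes no boundary edge; as $\partial M$ is nonempty this forces $v_i \in M$ and then $|E(\partial M)| = |\partial_{v_i} M| = 2$. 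In other words, the entire cut consists of exactly two edges, both incident to $v_i$.

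The crux, and the step I expect to be the main obstacle, is upgrading this localized two-edge cut to the conclusion $M = \{v_i\}$; this is exactly where $2$-connectivity (rather than mere $2$-edge connectivity) is needed, and it is the only place the vertex-connectivity hypothesis is used. If $M$ contained a vertex distinct from $v_i$, then deleting the vertex $v_i$ would delete both cut edges and disconnect $M \setminus \{v_i\}$ from $V(G) \setminus M$, contradicting $2$-connectivity. Hence $M = \{v_i\}$, so $\deg(v_i) = |E(\partial\{v_i\})| = 2$. Since $i$ was arbitrary, every vertex has degree two, and $G$ is a single cycle. Along the way I would verify the routine edge conditions, chiefly that $M(f) \neq V(G)$ so that $\partial M$ is a genuine cut, which is immediate from $(f) \neq 0$.
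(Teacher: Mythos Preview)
Your proposal is correct and follows essentially the same argument as the paper. The only cosmetic difference is that you work with $M(f)$ where the paper works with $N(f)$; since your $f$ is the negative of the paper's, these are the same set, and the remainder of the argument (the two-edge cut at $v_i$ and the $2$-connectivity step forcing $M=\{v_i\}$) is identical.
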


\begin{proof}
Note that translating by $D$ does not fix any $v_i$. Pick any $v_i$. We have $D \sim v_i - v_{\sigma\inv(i)} \sim v_{\sigma(i)} - v_i$, so $2v_i \sim v_{\sigma(i)} + v_{\sigma\inv(i)}$. By \cref{high connectedness controls degree and boundary size}, we must have that $G$ is $2$-edge connective. Pick an $f$ with $2v_i = (f) + v_{\sigma(i)} + v_{\sigma\inv(i)}$. Again by \cref{high connectedness controls degree and boundary size}, we have $|E(\partial N(f))| = 2$, and for $x \neq v_i$ we have $|\partial_x N(f)| = 0$. Thus there are exactly two edges between $N(f)$ and the rest of $G$, each of which is adjacent to $v_i$. If $N(f)$ contained any vertex $t \neq v_i$ it would follow that removing $v_i$ disconnects $t$ from the rest of $G$, violating $2$-connectedness of $G$. Thus $N(f) = \{ v_i \}$, from which we see that $\deg(v_i) = 2$. Since $v_i$ was arbitrary every vertex in $G$ has degree at most two, which since $G$ is $2$-connected implies that $G$ is a single cycle.
\end{proof}

The following result generalizes the claim in \cref{geometric effectiveness}.

\begin{theorem} (Backman) \label{ineffective is dominated by acyclic}
Let $Q$ be a divisor of degree at most $g-1$. Then $|Q| = \emptyset$ if and only if there exists an acyclic partial orientation $U$ and some $A \sim Q$ for which $c(U) \geq A$. On the other hand $Q$ is linearly equivalent to an effective $B$ if and only if there exists a sourceless partial orientation $W$ and an effective $B' \sim Q$ with $c(W) = B'$.
\end{theorem}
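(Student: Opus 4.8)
The plan is to treat the two biconditionals separately and to reduce everything to \cref{geometric effectiveness}, which already identifies acyclic (resp.\ sourceless) orientation classes with those whose Chern class contains no effective (resp.\ an effective) divisor. The organizing observation is that the domination condition ``there is $A \sim Q$ with $c(U) \geq A$'' is equivalent to $|c(U) - Q| \neq \emptyset$: if $A \sim Q$ and $E := c(U) - A \geq 0$ then $E \sim c(U) - Q$, and conversely any effective $E \sim c(U) - Q$ yields $A := c(U) - E \sim Q$ with $c(U) \geq A$. So the first claim amounts to producing an acyclic partial orientation $U$ with $|c(U) - Q| \neq \emptyset$, and the entire difficulty lies in manufacturing a suitable Chern class.

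The sourceless half and the easy directions are quick. For the sourceless statement, if $Q \sim B$ with $B$ effective, then $B + \sum_{v} v$ is effective, so $|B + \sum_{v} v| \neq \emptyset$ and Backman's partial orientability criterion gives a partial orientation with Chern class $B$; since $B$ is effective, \cref{geometric effectiveness} makes this orientation equivalent to a sourceless $W$ with $c(W) = B =: B'$. The converse is immediate, as $c(W) = B' \sim Q$ already exhibits $Q$ as effective-equivalent. Likewise, for the ``only if'' direction of the first claim, given acyclic $U$ and $A \sim Q$ with $c(U) \geq A$, a nonempty $|Q|$ would produce $B \sim Q \sim A$ effective, whence $c(U) = A + (c(U) - A) \sim B + (c(U) - A)$ is a sum of effective divisors and hence effective; this contradicts $|c(U)| = \emptyset$, which holds because $U$ is acyclic.

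The substance is the forward direction of the first claim: given $|Q| = \emptyset$ and $\deg Q \leq g - 1$, I would construct the acyclic $U$. First I would pad $Q$ up to degree $g-1$ while keeping its linear system empty, that is, find an effective $E$ with $\deg(Q + E) = g - 1$ and $|Q + E| = \emptyset$. I expect this to be the only genuine computation, and I would obtain it from a single application of Baker--Norine Riemann--Roch: writing $K$ for the canonical divisor, one has $r(K - Q) = g - 2 - \deg Q$, which is strictly less than $k := g - 1 - \deg Q$, so the definition of rank furnishes an effective $E$ of degree $k$ with $|K - Q - E| = \emptyset$; since $\deg(Q + E) = g - 1$, Riemann--Roch converts $|K - Q - E| = \emptyset$ back into $|Q + E| = \emptyset$. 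Setting $P := Q + E$, the bound $\deg P = g - 1 \geq g - |V(G)|$ makes $P$ partially orientable by \cref{partially orientable}, so $P = c(U_0)$ for some partial orientation $U_0$; because $|P| = \emptyset$, \cref{geometric effectiveness} replaces $U_0$ by an equivalent acyclic $U$ with $c(U) = P$. Finally $c(U) = Q + E \geq Q$, so taking $A = Q$ completes the construction.

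The main obstacle, and the only step needing more than bookkeeping, is this padding lemma: raising a rank $-1$ divisor to degree $g - 1$ without acquiring an effective representative. Everything else is a direct assembly of \cref{geometric effectiveness}, \cref{partially orientable}, and the partial orientability criterion. If one prefers to stay inside the orientation formalism rather than invoke the full Riemann--Roch statement, an alternative is to take $P$ of maximal degree subject to $|P| = \emptyset$ and $|P - Q| \neq \emptyset$ and argue that maximality forces $\deg P$ into the range where \cref{partially orientable} applies; but the Riemann--Roch computation above is cleaner, and I would use it.
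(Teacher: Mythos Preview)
Your argument is correct. The paper itself does not prove this statement; it simply defers to Backman's original paper, noting that the reference also supplies an algorithm for producing the relevant orientations. So there is nothing to compare your approach against within this paper.

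Two small notational points, neither affecting correctness. First, when you write ``an equivalent acyclic $U$ with $c(U) = P$'', equivalence of partial orientations only guarantees $c(U) \sim P$ in $\Pic$, not equality of the underlying divisors; but your organizing observation at the outset already covers this, since $c(U) - Q \sim E \geq 0$ is all you need to extract the required $A$. Second, in the sourceless half, the sourceless $W$ equivalent to your partially orienting $U_0$ need not have $c(W) = B$ on the nose, only $c(W) \sim B$; the fix is to set $B' := c(W)$, which is automatically effective because a sourceless partial orientation has every vertex receiving at least one edge, hence $c(W)(v) \geq 0$ for all $v$.

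It is worth noting that your route through Baker--Norine Riemann--Roch is purely existential, whereas Backman's proof (as the paper hints) is constructive: it builds the dominating acyclic orientation by an explicit unfurling-type procedure. Your version is cleaner to write down; Backman's buys an algorithm. There is no circularity in invoking Riemann--Roch here, since Baker--Norine's proof predates and is independent of the orientation formalism.
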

\begin{proof}
A proof and an algorithm for producing the relevant objects appear in \cite{backman_riemann-roch_2017}.
\end{proof}

\begin{cor} \label{extend to nonspecial}
Let $Q$ be a divisor of degree at most $g-1$ with $|Q|$ empty. Then there exists an effective divisor $T$ such that $Q + T \in \mc N(G)$.
\end{cor}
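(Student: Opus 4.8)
The plan is to use \cref{ineffective is dominated by acyclic} to realize $Q$, up to linear equivalence, as dominated by the Chern class of an acyclic partial orientation, and then to extend that orientation to a full acyclic orientation. A full acyclic orientation is automatically nonspecial and has degree exactly $g-1$, so its Chern class will be the element of $\mc N(G)$ we are looking for, and the difference with $A$ will supply the effective divisor $T$.

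First, since $\deg Q \le g-1$ and $|Q| = \emptyset$, \cref{ineffective is dominated by acyclic} furnishes an acyclic partial orientation $U$ together with a divisor $A \sim Q$ satisfying $c(U) \ge A$. Next I would extend $U$ to a \emph{full} acyclic orientation $\tilde U$. This is possible because the oriented part of an acyclic partial orientation induces a partial order on $V(G)$ by reachability (antisymmetry is exactly acyclicity); extending this partial order to any total order on $V(G)$ and orienting each remaining edge from its smaller to its larger endpoint produces a full orientation which is a topological sort, hence acyclic, and which agrees with $U$ on the edges $U$ already oriented. Passing from $U$ to $\tilde U$ only orients previously unoriented edges, so $c(\tilde U) = c(U) + T_0$, where $T_0 = \sum_e t(e)$, summed over the newly oriented edges, is effective. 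In particular $c(\tilde U) \ge c(U) \ge A$.

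Now set $T := c(\tilde U) - A$. By the displayed inequalities $T$ is effective, and since $Q \sim A$ we have
\[ Q + T \sim A + \bigl(c(\tilde U) - A\bigr) = c(\tilde U). \]
Because $\tilde U$ is a full orientation, $c(\tilde U)$ has degree $g-1$, so $Q + T$ has degree $g-1$ as well; because $\tilde U$ is acyclic, \cref{geometric effectiveness} shows that $c(\tilde U)$ has no effective divisor in its linear equivalence class, i.e.\ it is nonspecial. Hence $Q + T \sim c(\tilde U)$ lies in $\mc N(G)$, as required.

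I expect the only point requiring care to be the extension of the acyclic partial orientation to a full acyclic orientation, together with the bookkeeping that this modifies the Chern class by the effective divisor $T_0$; everything else is a direct application of the two cited theorems. It is worth verifying explicitly that the reachability relation of an acyclic partial orientation is a genuine partial order, so that a linear extension exists, and that the already-oriented edges remain consistent with the chosen total order.
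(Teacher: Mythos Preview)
Your proof is correct and follows essentially the same approach as the paper: apply \cref{ineffective is dominated by acyclic}, extend the resulting acyclic partial orientation to a full acyclic orientation via a linear extension of the reachability partial order, and take $T$ to be the (effective) difference between its Chern class and the dominated divisor. If anything, you are slightly more careful than the paper in distinguishing the representative $A \sim Q$ from $Q$ itself when forming $T$.
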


\begin{proof}
Apply \cref{ineffective is dominated by acyclic} to find an acyclic partial orientation $U$ with $c(U) \geq Q$. This partial orientation defines a partial order on the vertices of $G$ by reachability along oriented paths. Extend this partial order to a total order $\preceq$. We can orient the edges of $G$ to always point to the $\preceq$-greater vertex, producing an extension of $U$ to an acyclic full orientation $U'$. Since $c(U') \geq c(U) \geq Q$, we have $c(U') - Q \geq 0$, the desired $T$ is $c(U') - Q$.
\end{proof}

We can now prove \cref{theta divisor can't be translated onto itself}.

\begin{proof}
By hypothesis $G$ is not cyclic, so by \cref{first abel jacobi image can't be translated to itself} there is some vertex $v$ with $D + v$ not linearly equivalent to an effective divisor. Applying \cref{extend to nonspecial} to find a $T \geq 0$ with $D + v + T \in \mc N(G)$, we have $v + T \in \mc S(G)$ and $D + v + T \not \in \mc S(G)$.
\end{proof}

We need one last technical result.

\begin{prop} \label{same size AJ images}
	Let $(G, \hat e)$ and $(H, \hat w)$ be graphs with $M(G) \cong M(H)$. Then for any $d \geq g(G) - |V(G)|$, the number of linear equivalence classes of degree $d$ containing an effective divisor is the same in both $G$ and $H$. Equivalently, $|\im S^d_{\hat e}| = |\im S^d_{\hat w}|$. 
	\end{prop}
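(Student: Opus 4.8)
The plan is to recast the count of effective linear-equivalence classes as a count of equivalence classes of orientations, where a matroid isomorphism acts through the pushforward $\vphi_{\mathbb O}$, and then to split on the degree $d$. First I would observe that $\im S^d_{\hat e}$ is the translate by $-d\,t(\hat e)$ of the set of effective degree-$d$ classes inside $\Pic^d_{\dee}(G)$, so $|\im S^d_{\hat e}|$ is exactly the number of degree-$d$ classes meeting the effective cone; this reconciles the two formulations in the statement. I would then present the given isomorphism $M(G)\cong M(H)$ as a morphism $\vphi$ in $\orCyc$ by choosing orientations and base edges, so that $\vphi_*$ and $\vphi_{\mathbb O}$ become available (using that the forgetful functor $\orCyc\to\Cyc$ is full and faithful, and that $|V(G)|=|V(H)|$ since a matroid isomorphism preserves rank).

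Two ranges of $d$ are immediate. For $d\geq g$ the Baker--Norine lemma makes every degree-$d$ class effective, so the count is $|\Pic^0_{\dee}(G)|$, the number of spanning trees of $G$, hence the number of bases of $M(G)$; since $M(G)\cong M(H)$ this agrees for $H$. For $d=g-1$ the effective classes are precisely the special classes $\mc S(G)$, and \cref{preservation of oriented cycles} already gives $|\mc S_{\mathbb O}(G)|=|\mc S_{\mathbb O}(H)|$, whence $|\mc S(G)|=|\mc S(H)|$ through the Chern-class bijection.

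For the intermediate range $g-|V(G)|\leq d\leq g-2$ I would work entirely with orientations. By \cref{partially orientable} every degree-$d$ divisor is partially orientable, so the Chern-class map is a bijection $\overline{\mathbb O^d}(G)\xrightarrow{\sim}\Pic^d_{\dee}(G)$, and \cref{geometric effectiveness} identifies the non-effective classes with those admitting an acyclic representative. On the level of individual orientations, $\vphi_{\mathbb O}\colon\mathbb O^d(G)\to\mathbb O^d(H)$ is a bijection that carries the unoriented set $X$ to $\vphi(X)$ and, by the oriented-cycle lemma preceding \cref{preservation of oriented cycles} (applied also to $\vphi^{-1}$), sends acyclic orientations to acyclic orientations. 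For orientations sharing a fixed unoriented set $X$, \cref{well defined orientation pushforward} shows $\vphi_{\mathbb O}$ descends to Chern classes, so it matches the non-effective classes realized by an acyclic representative with unoriented set $X$ against those of $H$ with unoriented set $\vphi(X)$.

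The main obstacle is assembling these fixed-$X$ matchings into a single equality of counts over all of $\Pic^d_{\dee}(G)$. The difficulty is precisely that $\vphi_{\mathbb O}$ does \emph{not} descend to $\overline{\mathbb O^d}(G)$ when $d<g-1$: by \cref{diagram rigidity} two representatives of one class with different unoriented sets $X,X'$ push forward to classes differing by $\mathbb L_X-\mathbb L_{X'}$, which need not vanish (cf.\ \cref{lowering compatibility}). A single non-effective class may therefore carry acyclic representatives with several different unoriented sets, so the fixed-$X$ bijections are a priori incompatible across $X$. I would resolve this by selecting, for each non-effective class, a canonical acyclic representative---for instance the one produced by Backman's algorithm---whose unoriented set depends only on the class, and then checking that $\vphi_{\mathbb O}$ sends canonical representatives to canonical representatives; this collapses the global count into one coherent family of fixed-$X$ matchings, after which subtracting from the matroid-invariant total $|\Pic^d_{\dee}(G)|=|\Pic^0_{\dee}(G)|$ gives the effective count. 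Verifying that canonical forms are compatible with $\vphi_{\mathbb O}$ is the real work; the degrees $d\ge g$ and $d=g-1$ and the translation bookkeeping are routine by comparison.
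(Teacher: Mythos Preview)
Your overall approach---pass to partial orientations via the Chern-class bijection, identify effective classes with those admitting a sourceless representative, and exploit that $\vphi_{\mathbb O}$ preserves oriented cycles---is exactly the paper's. The paper's proof is a two-sentence citation of \cref{partially orientable}, \cref{ineffective is dominated by acyclic}, and \cref{preservation of oriented cycles}, treated uniformly over all $d$ in range; it makes no case split and does not discuss descent of $\vphi_{\mathbb O}$.

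You correctly locate a subtlety the paper's sketch leaves implicit: for $d<g-1$ the map $\vphi_{\mathbb O}$ descends only to each $\overline{\mathbb O}(G,X)$, not to $\overline{\mathbb O^d}(G)$ as a whole, so the bijection on acyclic \emph{representatives} does not automatically yield a bijection on ineffective \emph{classes}. But your proposed resolution---pick the canonical acyclic representative produced by Backman's algorithm and check that $\vphi_{\mathbb O}$ respects it---is not carried out and is unlikely to work as stated. Backman's procedures take a base \emph{vertex} as input, and a cyclic bijection sees only edges and cycles, not vertices; there is no mechanism by which $\vphi_{\mathbb O}$ would intertwine such a vertex-dependent normal form. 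So the step you flag as ``the real work'' is genuinely open in your write-up: you have isolated the difficulty more precisely than the paper does, but you have not closed it, and the specific patch you propose would need a different idea to succeed.
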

	\begin{proof}
	By \cref{partially orientable}, every divisor of these degrees is partially orientable. Applying \cref{ineffective is dominated by acyclic} and \cref{preservation of oriented cycles} gives the result.
	\end{proof}
	
	\begin{remark}
	It would be extremely strange if the condition on the degrees in this corollary were necessary, but the author does not know how to remove it.
	\end{remark}

\subsection{Proof of equivalent characterizations of rigidity} \label{proof of rigidity equivalences}

In \cref{discrete is only obstacle} we outlined how failure to preserve the discrete theta divisor ought to result in a nonzero rigidity divisor. The prior section gave us the technical tools necessary to demonstrate that the presence of a nonzero rigidity divisor always prevents preservation of the discrete theta divisor. We now prove \cref{extended torelli}.

\begin{proof} $(1) \Leftrightarrow (2)$: We have already proven, in \cref{extended torelli}, that $\vphi$ is rigid if and only if $\mc E_\vphi \sim 0$.

$(1) \Leftrightarrow (3)$: We have 
\begin{align*} \vphi_*(\theta_{\hat e}, g-1) &= \vphi_*(\iota_{\hat e}c(\mc O \iota_{\hat e}\inv (\theta_{\hat e}, g-1))) \\
	&= \vphi_* \iota_{\hat e}c (\mc O \iota_{\hat e}\inv (\theta_{\hat e}, g-1)) \\
	&= \iota_w c \vphi_{\mathbb O} (\OO \iota_e\inv (\theta_{\hat e}, g-1)) + \mc E_\vphi \\
	&= \iota_w c \vphi_{\mathbb O}(\mc S_{\mathbb O}(G)) + \mc E_\vphi \\
	&= \iota_w c (\mc S_{\mathbb O}(H)) + \mc E_\vphi \\
	&= (\theta_{\hat w}, g-1) + \mc E_\vphi \\
	&= (\theta_{\hat w} + \mc E_\vphi, g-1) \end{align*} 
If $\vphi$ is rigid, then from $(1) \Leftrightarrow (2)$ we know that $\mc E_\vphi \sim 0$, so $\vphi_*(\theta_{\hat e}) = \theta_{\hat w}$. If $\vphi$ is not rigid, then again by $(1) \Leftrightarrow (2)$ we know that $\mc E_\vphi \not \sim 0$. Now $\iota_w\inv((\theta_{\hat w}, g-1) + \mc E_\vphi) = \mc S(H) + \iota_w\inv(\mc E_\vphi)$, where $\iota_w\inv(\mc E_\vphi) \not \sim 0$. By \cref{theta divisor can't be translated onto itself}, we have $\mc S(H) + \iota_w\inv(\mc E_\vphi) \neq \mc S(H)$, so applying $\iota_w$, we find that $\vphi_*(\theta_{\hat e}) \neq \theta_{\hat w}$.

$(4) \Rightarrow (3)$: Immediate.

$(3) \Rightarrow (4)$: If $g = 2$ then the discrete theta divisors are the same as $\im S_{\hat e}^1$ and $\im S_{\hat w}^1$ and so there is nothing to show, so assume $g > 2$. 

We start by showing that we must have $\vphi_*(\im S_{\hat e}^{g-2}) = \im S_{\hat w}^{g-2}$. Both sets are finite, and by \Cref{same size AJ images} are of the same size, so it is sufficient to show that $\vphi_*(\im S_{\hat e}^{g-2}) \subseteq \im S_{\hat w}^{g-2}$. 

Since $\vphi$ is rigid, the diagram of \cref{diagram rigidity} commutes with $X = \{ e\}$. If $W \in \mathbb O(G, X)$ is acyclic, then by \cref{preservation of oriented cycles} so is $\vphi_{\mathbb O}(W)$. It follows that for any $U \in \mathbb O(G, X)$ with an effective divisor in its Chern class, we have that $\vphi_{\mathbb O}(U)$ is not acyclic, so by \Cref{ineffective is dominated by acyclic} we know that $\vphi_{\mathbb O}(U)$ is equivalent to a sourceless orientation. Thus if $D \sim c(U) \in \Pic^{g-2}_{\dee}(G)$ is effective, we have that $\iota_{\hat w}\inv (\vphi_* ) \iota_{\hat e}(D) \sim c(\vphi_{\mathbb O}(U))$, implying that $\iota_{\hat w}\inv (\vphi_* ) \iota_{\hat e}(D)$ is linearly equivalent to an effective divisor.

Now if $D \in \Pic^{g-2}_{\dee}(G)$ is effective, then by \cref{ineffective is dominated by acyclic} we can find some $U \in \mathbb O^{g-2}(G)$ with Chern class $D$. Necessarily, $U$ only has a single unoriented edge $r$. Letting $R$ be the set of vertices reachable from $t(\hat e)$ along oriented paths, if $R$ does not include one of the ends of $r$, the edges from $R$ to $R^c$ form a consistently oriented cut. Reverse the cut and repeat until there is an oriented path from $t(\hat e)$ to $r$. Using the third operation of \cref{linear equivalence of orientations} repeatedly along this path, we find that $U$ is equivalent to an orientation in $\mathbb O(G, X)$. Thus $\iota_{\hat w}\inv (\vphi_* ) \iota_{\hat e}(D)$ is linearly equivalent to an effective divisor, so applying $\iota_{\hat w}$ shows that $\im S_{\hat e}^{g-2}$ is preserved.

Suppose for the sake of contradiction that we can choose a vertex $p \in G$ with $\vphi_*(S^1_{\hat e}(p)) \not \in \im S^1_{\hat w}$. Let $Q = \iota_{\hat w}\inv(\vphi_* )\iota_{\hat e}(p)$. By assumption $Q$ is not linearly equivalent to any effective divisor. By \cref{extend to nonspecial} we can find some effective $T \in \Pic^{g-2}_{\dee}$ for which $Q + T \in \mc N(H)$. Writing $T = \iota_{\hat w}\inv (\vphi_* ) \iota_{\hat e}(D)$ for an effective $D$, we have $Q + T \sim \iota_{\hat w}\inv (\vphi_* ) \iota_{\hat e}(p + D) \in \mc S(H)$, which is the desired contradiction.
\end{proof}

\section{Interpretation in terms of the structure of graphs} \label{Geometric interpretation and the main result}

In this section we prove the main result, \cref{geometric rigidity}. This characterizes rigidity of a morphism as coming from being `near to' a graph isomorphism, in a sense we will make precise. 

The first order of business is to describe the inability of $J(G)$ to distinguish certain edges. In \cite{caporaso_torelli_2010} maximal collections of such edges are referred to as \textit{codimension one sets}. We use the term series, which follows \cite{oxley_connectivity_1981}.

\begin{defn}
Let $G$ be a $2$-edge connected graph and $e \in G$ be an edge. Let $\varepsilon (e)$ be the set of edges $\ell$ such that $G \bs \{e, \ell \}$ is not connected. We refer to $\varepsilon(e)$ as a \textit{series class} and say $e$ and $\ell$ are \textit{in series}. We call $e$ a \textit{series edge} if $|\varepsilon(e)| > 1$. 
\end{defn}

Edges in a $2$-edge connected graph are in series precisely when they are contained in the same set of simple cycles. A $2$-edge connected graph is $3$-edge connected exactly when the graph has no series edges. Here is the picture we ought to have when a series class $C$ has at least two edges. Consider pearls whose positions on a single strand necklace are fixed. The components of $G \bs C$ are the pearls, while the series edges are the lengths of chain running between the pearls.

\begin{lemma} \label{series ambiguity}
Let $\ell$ be an edge in a $2$-connective and $2$-edge connected graph $G$, with ends $v,w$. Let $p,q$ be any two vertices in $G$ with $v-w \sim p-q$. Then $p$ and $q$ are the ends of an edge in series with $\ell$. Conversely, if $r$ is an edge in series with $\ell$, with ends $p, q$, then $v - w \sim \pm(p-q)$.
\end{lemma}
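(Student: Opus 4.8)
The plan is to treat the two directions separately; the converse is a short computation and the forward direction carries essentially all of the difficulty.

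For the converse, suppose $r$ is in series with $\ell$, with ends $p,q$. Since $G$ is $2$-edge connected and $G \setminus \{\ell, r\}$ is disconnected, $\{\ell, r\}$ is a minimal edge cut of size exactly two: removing $\ell$ leaves a connected graph in which $r$ is a bridge, so $G \setminus \{\ell, r\}$ has exactly two components $A, B$ and both $\ell$ and $r$ join them. Orienting $\hat\ell \colon w \to v$ and orienting $r$ so that the cut is consistently oriented, the cut cochain is $\ell^* \pm r^*$, which lies in the orthogonal complement of $H^1(G, \R)$ (spanned by the consistently oriented cuts). Hence $\pi(\ell^* \pm r^*) = 0$, i.e. $h_\ell = \mp h_r$. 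Under the isomorphism $\Pic^0_{\dee}(G) \cong J_{\dee}(G)$ sending $v - w \mapsto h_\ell$ and $\pm(p - q) \mapsto \pm h_r$, this reads $v - w \sim \pm(p - q)$, as claimed.

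For the forward direction I would first rewrite the hypothesis $v - w \sim p - q$ as a linear equivalence of effective degree-two divisors, $D := p + w \sim v + q =: D'$. If $D = D'$ then $\{p, q\} = \{v, w\}$ and $p, q$ are the ends of $\ell$ itself; so assume $D \neq D'$ and write $D' = (f) + D$ with $f$ integer valued (possible exactly because $D \sim D'$). Now apply \cref{high connectedness controls degree and boundary size} with $k = 2$: since $\deg D = 2$ is the least possible value, we are in the equality case, so $\partial M(f)$ is an edge cut with exactly two edges and $D$ is supported on the $M(f)$-side endpoints of these edges, with multiplicity. Dually (using $N(f) = M(-f)$), $\partial N(f)$ is a two-edge cut and $D' = v + q$ is supported on the $N(f)$-side endpoints of its edges. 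Because $M(f)$ and $N(f)$ are the sets where $f$ is maximal and minimal, and $f$ is non-constant, these sets are disjoint; since $w$ lies in the support of $D$ it lies in $M(f)$, and $v$ lies in $N(f)$. Thus $\ell = wv$ runs from $M(f)$ to $N(f)$, so $\ell$ belongs to both cuts $\partial M(f)$ and $\partial N(f)$.

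The crux is to show that $f$ takes only two values, so that $V(G) = M(f) \sqcup N(f)$ and the two cuts coincide. Here I would revisit the estimate proving \cref{high connectedness controls degree and boundary size}: in the equality case, running the inequality $-D(x) \leq (f)(x) \leq -|\partial_x M(f)|$ at $x = w$ forces $(f)(w) = -|\partial_w M(f)|$, and equality in the final step forces every neighbour of $w$ outside $M(f)$ — in particular the far end $v$ of $\ell$ — to satisfy $f = \max(f) - 1$. Since $v \in N(f)$ gives $f(v) = \min(f)$, we get $\max(f) - \min(f) = 1$; as $f$ is integer valued it takes exactly the two values $\min(f), \max(f)$, so $V(G) = M(f) \sqcup N(f)$. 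Consequently $\partial M(f) = \partial N(f)$ is a single minimal two-edge cut $\{\ell, a\}$, whence $a$ is in series with $\ell$; reading off supports, the $M(f)$-end of $a$ is $p$ (from $D = p + w$) and its $N(f)$-end is $q$ (from $D' = v + q$), so $a$ is an edge in series with $\ell$ with ends $p, q$.

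I expect the main obstacle to be exactly this last step — ruling out intermediate values of the potential $f$. The degree-two input from \cref{high connectedness controls degree and boundary size} pins down the boundary cut of the top level but does not by itself prevent a ladder of intermediate level sets between $M(f)$ and $N(f)$; the fact that collapses everything is that the single edge $\ell$ joins the top level directly to the bottom level, which through the equality analysis forces $\max(f) - \min(f) = 1$. The degenerate coincidences $p = w$ or $q = v$ (forcing a doubled support point, and possibly an edge parallel to $\ell$) are handled by the same argument with the corresponding multiplicities, and I would dispatch them with a remark rather than a separate case analysis.
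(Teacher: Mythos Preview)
Your proof is correct and follows essentially the same route as the paper: both directions rewrite $v-w\sim p-q$ as an equivalence of effective degree-two divisors, invoke the equality case of \cref{high connectedness controls degree and boundary size} to pin the supports to the boundaries of $M(f)$ and $N(f)$, and then use that the edge $\ell$ itself crosses between the extremal level sets to force $f$ to take only two values. The only cosmetic differences are that the paper carries out the equality analysis at $v\in N(f)$ rather than at $w\in M(f)$, handles the coincidence $v=q$ (and dually $w=p$) by a separate appeal to the argument of \cref{first abel jacobi image can't be translated to itself}, and verifies the converse by writing down the two-valued potential $f$ directly rather than via $h_\ell=\mp h_r$; your uniform treatment of the degenerate supports is a mild streamlining.
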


\begin{proof}
For the first claim, assume $v - w \neq p-q$, since otherwise there is nothing to prove. It follows that $v \neq p$. If $v = q$, then arguing as in the proof of \cref{first abel jacobi image can't be translated to itself} yields the conclusion. Reasoning similarly with $w$, we reduce to the case that all of $v,w,p$ and $q$ are distinct. 

We have $v + q = (f) + w + p$ for some nonconstant $f$. By \cref{high connectedness controls degree and boundary size}, we know that $V(\partial M(f)) = \{w,p \}$ and $V(\partial N(f)) = \{v,q\}$, and that each of these is adjacent to exactly one edge out of $M(f)$ or $N(f)$. By subtracting a constant we may assume the minimal value of $f$ is zero; this makes it clear that we cannot have that $v$ is adjacent to a vertex $t$ with $f(t) > 1$. Thus $f(w) = 1$, so $M(f) \cup N(f)$ partitions the vertices of the graph. The second edge $r$ out of $M(f)$ is therefore also the second edge into $N(f)$. Removing $\ell$ and $r$ then disconnects $G$, so $\ell$ and $r$ are in series as desired.

Conversely, if $\ell$ and $r$ are in series, say with $G \bs \{\ell, r\}$ having components $X$ and $Y$, one computes directly that setting $f$ to be $1$ on $X$ and $0$ on $Y$ yields the desired linear equivalence. 
\end{proof}

\begin{defn}
If $\psi: M(G) \to M(G)$ is a cyclic bijection which takes each series class into itself, we say that $\psi$ is \textit{series fixing}.
\end{defn}

\begin{prop} \label{series functorial ambiguity}
Let $\psi$ be a automorphism of $(\hat G, \hat e)$ in $\orCyc$. Then $\psi$ is series fixing if and only if $\psi_* = \id$.
\end{prop}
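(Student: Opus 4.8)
The plan is to reduce everything to a clean dictionary between the geometry of $H^1(G,\R)$ and series classes, and then to handle one delicate sign. The key lemma I would isolate first is: for a $2$-edge connected graph, two edges $\ell$ and $r$ satisfy $h_\ell = \pm h_r$ if and only if they lie in a common series class (equivalently, are contained in exactly the same simple cycles). For the forward implication, if $\ell \neq r$ are in series then $\{\ell, r\}$ is a minimal edge cut of $G$: removing either edge alone leaves $G$ connected, and a third crossing edge would prevent disconnection. Hence the associated consistently oriented cut cochain $a_\ell \ell^* + a_r r^*$, with $a_\ell, a_r \in \{-1,1\}$, lies in $H^1(G,\R)^\perp$, and applying $\pi$ gives $a_\ell h_\ell + a_r h_r = 0$, i.e. $h_\ell = \pm h_r$. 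Conversely, since $\ell^* - h_\ell \perp H^1(G,\R)$, for every cycle $C$ the pairing $\langle h_\ell, \alpha(C)\rangle$ equals the $\ell^*$-coefficient of $\alpha(C)$; thus $h_\ell = \pm h_r$ forces the $\ell^*$- and $r^*$-coefficients of every $\alpha(C)$ to agree up to a fixed sign, so $\ell \in C$ if and only if $r \in C$, and the two edges share all simple cycles. This lemma carries the real content; everything downstream uses only that $\psi_*$ acts on $H^1(G,\R)$ by $h_\ell \mapsto \sgn_\psi(\ell) h_{\psi(\ell)}$, which follows from \cref{functoriality}.

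Granting the lemma, the implication $\psi_* = \id \Rightarrow$ series fixing is immediate: for each edge $\ell$ we have $h_\ell = \psi_*(h_\ell) = \sgn_\psi(\ell) h_{\psi(\ell)}$, so $h_{\psi(\ell)} = \pm h_\ell$, and the lemma places $\psi(\ell)$ in the series class of $\ell$. Thus $\psi$ carries each series class into itself and is series fixing. (Since $G$ is $2$-edge connected it has no bridges, so each $h_\ell \neq 0$ and the lemma applies.)

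For the converse I would assume $\psi$ is series fixing and prove $\psi_* = \id$ on $H^1(G,\R)$; this is equivalent to the identity on $J(G)$, because a lattice-preserving linear endomorphism that induces the identity on $H^1(G,\R)/H^1(G,\Z)$ must itself be the identity. Series fixing forces $\psi$ to fix every non-series edge (its class is a singleton) and to permute each genuine series class within itself. Because all edges of a series class lie in exactly the same simple cycles, any simple cycle $C$ containing one edge of such a class contains the whole class; therefore $E(C)$ is a union of $\psi$-fixed single edges and entire $\psi$-invariant series classes, which gives $\psi(C) = C$ as a set of edges. Consequently $\psi_*(\alpha(C)) = \sum_i s_i \sgn_\psi(e_i)\psi(e_i)^*$ is a $\{-1,1\}$-valued cochain supported on $E(C)$ that lies in $H^1(G,\Z)$; since the integral cycle-space elements supported on a simple cycle form the rank-one lattice $\Z \cdot \alpha(C)$, we obtain $\psi_*(\alpha(C)) = \pm\alpha(C)$.

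The genuinely delicate step — the main obstacle — is pinning this sign to $+$, and here I would use the base edge. As $\psi$ is base preserving we have $\psi(e) = e$ and $\sgn_\psi(e) = 1$. For any simple cycle $C$ through $e$, the $e^*$-coefficient of $\psi_*(\alpha(C))$ receives a contribution only from the index with $\psi(e_i) = e$, namely $e_i = e$, and equals $s_e \sgn_\psi(e) = s_e$, which matches the $e^*$-coefficient of $\alpha(C)$; hence $\psi_*(\alpha(C)) = +\alpha(C)$ for every cycle through $e$. Finally, exactly as in the proof of \cref{functoriality}, the algebraic cycles of simple cycles through $e$ generate $H^1(G,\Z)$ — any simple cycle avoiding $e$ is a difference of two cycles through $e$ — so they span $H^1(G,\R)$. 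Since $\psi_*$ fixes a spanning set, $\psi_* = \id$, completing the equivalence.
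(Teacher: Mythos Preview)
Your proof is correct. It differs from the paper's in two ways worth noting.

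First, you isolate the statement $h_\ell = \pm h_r \iff \ell, r$ in series as a preliminary lemma, whereas the paper states it only as a corollary \emph{after} the proposition. Your proof of this lemma via the two-edge cut cochain is clean and stands on its own. For the direction $\psi_* = \id \Rightarrow$ series fixing, both arguments are then essentially the same: find a cycle separating $\ell$ from $\psi(\ell)$ and pair against it.

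Second, and more substantively, for the direction series fixing $\Rightarrow \psi_* = \id$ the paper works edge by edge: it observes $\psi_*$ commutes with $\pi$, so $\psi_*(h_\ell) = \sgn_\psi(\ell)\, h_{\psi(\ell)}$, and then checks that the coefficient of $\psi(\ell)^*$ in $\alpha(C)$ equals $\sgn_\psi(\ell)$ times the coefficient of $\ell^*$, appealing to how $\sgn_\psi$ was constructed in \cref{functoriality}. The base edge is used implicitly through that construction. You instead work cycle by cycle: $\psi(C) = C$ as a set forces $\psi_*(\alpha(C)) = \pm\alpha(C)$ via the rank-one support lemma, and you pin the sign to $+$ explicitly using $\psi(e) = e$ and $\sgn_\psi(e) = 1$. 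Your route makes the role of the base edge visible and avoids unpacking the sign recipe from \cref{functoriality}; the paper's route is shorter once one is willing to cite that recipe. Both are sound.
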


\begin{proof}
Suppose $\psi$ is series fixing. Let $\psi(\ell) = r$. Let $C$ be a simple cycle in $G$. If $C$ does not contain $\ell$, then $\ell^* \cdot \pi\alpha(C) = r^* \cdot \alpha(C) = 0$. Otherwise $\ell^* \cdot \alpha(C) = -r^* \cdot \alpha(C)$ if and only if $\sgn_\psi(\ell) = -1$. Since the $\alpha(C)$ span $H^1(\hat G, \R)$ it follows that $\psi_* h_\ell = \psi_* \pi(\ell^*) = \pi(\psi_*(\ell^*)) = \pi (\sgn_\psi(\ell) r^*) = \sgn_\psi(\ell) h_r = h_\ell$.

If $\psi_* = \id$, then for any edge $\ell$, if $\psi(\ell)$ were not in $\varepsilon(\ell)$, we could choose a cycle $C$ containing $\ell$ but not $\psi(\ell)$. Then $0 \neq h_\ell \cdot \alpha(C) = \psi_*(h_\ell) \cdot \alpha(C) = \sgn_\psi(\ell)h_{\psi(\ell)} \cdot \alpha(C) = 0$, a contradiction.
\end{proof}

\begin{cor}
We have $h_\ell = \pm h_r$ for edges $\ell$ and $r$ if and only if $\ell$ and $r$ are in series.
\end{cor}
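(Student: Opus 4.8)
The plan is to reduce the statement to a computation of inner products against algebraic cycles and then read off both directions from the incidence of $\ell$ and $r$ in the simple cycles of $G$. First I would record the identity that lets us test equality in $H^1(G, \R)$: since $\pi$ is the orthogonal projection onto $H^1(G,\R)$ and every $\alpha(C) \in H^1(G,\R)$, for any simple cycle $C$ we have $\langle h_\ell, \alpha(C)\rangle = \langle \pi \ell^*, \alpha(C)\rangle = \langle \ell^*, \pi\alpha(C)\rangle = \langle \ell^*, \alpha(C)\rangle$, which is $0$ if $\ell \notin C$ and otherwise the sign $\pm 1$ recording whether $C$ crosses $\ell$ with or against its orientation. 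Write this number as $\epsilon_\ell(C)$. Because the $\alpha(C)$ span $H^1(G,\R)$, an element of $H^1(G,\R)$ is determined by these pairings; hence $h_\ell = h_r$ is equivalent to $\epsilon_\ell(C) = \epsilon_r(C)$ for all simple $C$, and $h_\ell = -h_r$ to $\epsilon_\ell(C) = -\epsilon_r(C)$ for all simple $C$.

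With this in hand the forward direction is immediate. If $h_\ell = \pm h_r$ for one global choice of sign, then $\epsilon_\ell(C) = 0$ exactly when $\epsilon_r(C) = 0$, that is $\ell \in C \iff r \in C$ for every simple cycle $C$. Thus $\ell$ and $r$ lie in precisely the same simple cycles, which is the stated characterization of being in series.

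For the converse I would use the cut picture. If $\ell$ and $r$ are in series, then $\{\ell,r\}$ is a minimal edge cut and $G \setminus \{\ell,r\}$ splits into two components $X, Y$; every simple cycle either avoids both edges (so $\epsilon_\ell = \epsilon_r = 0$) or uses both, since a cycle using exactly one would cross the cut an odd number of times. A cycle using both crosses once from $X$ to $Y$ and once from $Y$ to $X$. Fixing the orientations of $\ell$ and $r$ in $\hat G$ determines constants $\sigma_\ell, \sigma_r \in \{\pm1\}$ recording whether each edge points from $X$ to $Y$; the product $\epsilon_\ell(C)\sigma_\ell$ then records the direction, relative to the cut, in which $C$ crosses $\ell$, and likewise for $r$. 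Since the two crossings of any such cycle are opposite, $\epsilon_\ell(C)\sigma_\ell = -\epsilon_r(C)\sigma_r$, so $\epsilon_\ell(C) = -\sigma_\ell\sigma_r\,\epsilon_r(C)$ with the constant $-\sigma_\ell\sigma_r$ independent of $C$. By the first paragraph this gives $h_\ell = \pm h_r$.

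The main obstacle is exactly this last point: that the $\pm$ is a single global sign rather than one varying with $C$; everything else is bookkeeping. As an alternative route to the converse I would note that the transposition swapping $\ell$ and $r$ and fixing every other edge is a series-fixing automorphism $\psi$, so \cref{series functorial ambiguity} gives $\psi_* = \id$ and hence $h_\ell = \psi_*(h_\ell) = \sgn_\psi(\ell)\, h_{r} = \pm h_r$; this works whenever a base edge distinct from $\ell$ and $r$ can be chosen, with the direct computation covering the remaining degenerate case.
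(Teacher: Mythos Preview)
Your argument is correct and is essentially the paper's own: the corollary has no separate proof in the paper because both directions are already contained in the proof of the preceding proposition (\cref{series functorial ambiguity}), which computes $\langle h_\ell,\alpha(C)\rangle$ via $\langle \ell^*,\alpha(C)\rangle$ and uses (i) the cycle-containment criterion for series to get the forward direction, and (ii) that for $\ell,r$ in series the sign relating $\ell^*\cdot\alpha(C)$ and $r^*\cdot\alpha(C)$ is the constant $\sgn_\psi(\ell)$ for the swap $\psi$. Your cut computation with $\sigma_\ell,\sigma_r$ just makes explicit why that sign is global, and your alternative route via \cref{series functorial ambiguity} is exactly how the paper intends the corollary to follow.
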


The above results reflect the general tendency for the Jacobian to have a difficult time telling edges in series apart. Thus $3$-edge connectedness hypotheses result from the need for some kind of useful canonicity. When our graphs are $3$-edge connected, rigidity has the strongest possible meaning.

\begin{theorem} \label{geometric rigidity}
Let $\vphi: (G, \Gamma_1, \hat e) \to (H, \Gamma_2, \hat w)$ be a morphism in $\orCyc$ between graphs of genus at least two. Then $\vphi$ is rigid if and only if there exists a series fixing automorphism $\psi: (M(H), w) \to (M(H), w)$ such that the composition
\[ (M(G), e) \xrightarrow{\vphi} (M(H), w) \xrightarrow{\psi} (M(H), w) \] 
is an edge isomorphism. In particular, if $\vphi$ is rigid, then $G$ and $H$ are isomorphic graphs.
\end{theorem}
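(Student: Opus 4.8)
The plan is to route everything through \cref{extended torelli}, which identifies rigidity with preservation of the first Abel--Jacobi image, and then to upgrade that set-theoretic statement to an honest graph isomorphism using the series results of this subsection. I would dispose of the converse first. Suppose a series fixing automorphism $\psi$ of $(M(H), w)$ is given with $\sigma := \psi \circ \vphi$ an edge isomorphism. Orienting $H$ by $\Gamma_2$ on both source and target, \cref{series functorial ambiguity} gives $\psi_* = \id$, so functoriality yields $\sigma_* = \psi_* \circ \vphi_* = \vphi_*$. Now $\sigma$ extends to a genuine isomorphism of based graphs which, preserving the based edge, carries $t(\hat e)$ to $t(\hat w)$ and hence paths from $t(\hat e)$ to paths from $t(\hat w)$; thus $\sigma_* P_v = P_{\sigma(v)}$ and $\sigma_*(\im S^1_{\hat e}) = \im S^1_{\hat w}$. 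Since $\sigma_* = \vphi_*$, condition (4) of \cref{extended torelli} holds and $\vphi$ is rigid.

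For the forward direction, assume $\vphi$ rigid, so $\vphi_*(\im S^1_{\hat e}) = \im S^1_{\hat w}$ by \cref{extended torelli}. Because $G$ is $2$-edge connected, \cref{high connectedness controls degree and boundary size} forbids moving a single chip across a two-edge cut, so $v \mapsto P_v$ is injective; as $\vphi$ preserves genus we have $|V(G)| = |V(H)|$, and $\vphi_*$ restricts to a bijection of the two Abel--Jacobi images, producing a vertex bijection $f : V(G) \to V(H)$ with $\vphi_* P_v = P_{f(v)}$. The substance of the theorem is that $f$ is a graph isomorphism. For an edge $\ell$ with ends $v, w$, orienting $\ell$ from $w$ to $v$ gives $P_v - P_w = h_\ell$; applying $\vphi_*$ and using $\vphi_* h_\ell = \sgn_\vphi(\ell) h_{\vphi(\ell)}$ yields $f(v) - f(w) \sim \pm\big(t(\vphi(\ell)) - o(\vphi(\ell))\big)$ in $\Pic^0_{\dee}(H)$. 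By \cref{series ambiguity}, $f(v)$ and $f(w)$ are then the ends of an edge of $H$ in series with $\vphi(\ell)$; in particular $f$ preserves adjacency, and (using $P_{t(\hat e)} = 0$ together with $\sgn_\vphi(e) = 1$) it sends $t(\hat e) \mapsto t(\hat w)$ and $o(\hat e) \mapsto o(\hat w)$.

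It remains to match edge multiplicities. Since $\vphi$ is a matroid isomorphism it carries the parallel class of $\ell$ (all edges joining $v$ and $w$) bijectively onto the parallel class of $\vphi(\ell)$ (all edges joining some pair $v', w'$), so these two classes have equal size. If $\{f(v), f(w)\} = \{v', w'\}$ the multiplicities at $\{v,w\}$ and $\{f(v),f(w)\}$ agree at once. Otherwise the series partner furnished above is distinct from $\vphi(\ell)$ and has different ends, so $\vphi(\ell)$ lies in a series class that is not a single pair of parallel edges; a short structural argument (two parallel edges in one link would form a $2$-cycle meeting no other link, contradicting the series relation) shows each link of that class is a single edge, forcing both multiplicities to equal $1$. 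Summing over all vertex pairs against $|E(G)| = |E(H)|$ then forces the multiplicity to vanish on the pairs where it vanishes in $G$, so $f$ is a graph isomorphism.

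Finally I would assemble $\psi$. Let $\sigma$ be the edge isomorphism induced by $f$, chosen to agree with $\vphi$ on each parallel class whose image is not a series edge (this is where multiplicities $\geq 2$ occur), and set $\psi := \sigma \circ \vphi^{-1}$, a base preserving cyclic bijection of $M(H)$. For each edge $m = \vphi(\ell)$ we get $\psi(m) = f(\ell)$, which is the series partner produced above when $\vphi(\ell)$ is a series edge and equals $m$ otherwise; in either case $\psi(m) \in \varepsilon(m)$, so $\psi$ is series fixing and $\psi \circ \vphi = \sigma$ is an edge isomorphism. The final clause is then immediate, since an edge isomorphism extends to a genuine isomorphism $G \cong H$. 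I expect the multiplicity bookkeeping---and in particular isolating the digon-type series classes, where a parallel class is simultaneously a series class---to be the main obstacle; everything else is a direct application of \cref{extended torelli}, \cref{series ambiguity}, and \cref{series functorial ambiguity}.
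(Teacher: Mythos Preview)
Your converse matches the paper's almost exactly: both reduce to $\vphi_* = (\psi\vphi)_*$ via \cref{series functorial ambiguity} and then assert that an edge isomorphism carries $\im S^1_{\hat e}$ onto $\im S^1_{\hat w}$. You add the claim that the graph isomorphism extending $\sigma$ sends $t(\hat e)$ to $t(\hat w)$; since on graphs with at least three vertices that extension is unique, this is an assertion rather than a choice and strictly speaking wants justification---but the paper is equally terse here (``we can check by hand on paths'').

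Your forward direction, however, is a genuinely different argument. The paper builds $\psi\vphi$ by an \emph{inductive construction}: starting from the base edge, it grows a connected subgraph $W\subset G$ on which $\psi\vphi$ is already a graph isomorphism, adjoining one new edge $\ell$ at a time; rigidity supplies a vertex $r$ with $\vphi_* P_p \equiv P_r$, \cref{series ambiguity} then locates a series partner of $\vphi(\ell)$ with endpoints $\psi\vphi(v)$ and $r$, and a cycle argument checks consistency when $\ell$ closes a loop in $W$. You instead extract the vertex bijection $f$ \emph{globally and all at once} from the bijection $\vphi_*:\im S^1_{\hat e}\to\im S^1_{\hat w}$ (injectivity of the first Abel--Jacobi map being immediate from \cref{high connectedness controls degree and boundary size}), prove adjacency by a single invocation of \cref{series ambiguity}, match edge multiplicities by a counting argument, and only afterwards assemble $\psi=\sigma\vphi^{-1}$. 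Your route is cleaner conceptually and makes the role of the first Abel--Jacobi image completely transparent; the price is the multiplicity bookkeeping (your structural claim---that in a $2$-connected graph on at least three vertices a series edge can have no parallel partner---is correct and short), which the paper's edge-by-edge construction sidesteps because each new edge is pinned down directly within its series class.
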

\begin{proof}
To check the backward direction, consider that $\vphi_* = \psi_*\vphi_* = (\psi \vphi)_*$ by \cref{series functorial ambiguity}. An edge isomorphism takes $\im S^1_{\hat e}$ to $\im S^1_{\hat w}$, as we can check by hand on paths, so by $(4) \Rightarrow (1)$ in \cref{extended torelli} we have that $\vphi$ is rigid.

Conversely, suppose $\vphi$ is rigid. We will inductively define a $\psi$ and extend $\psi\vphi$ to vertices so as to take larger and larger subgraphs of $G$ isomorphically to their image. As a base case, set $\psi(\hat w) = \hat w$. Setting $\psi\vphi(t(e)) = t(w)$ and $\psi\vphi(o(e)) = o(w)$ extends $\psi\vphi$ to vertices so as to produce an isomorphism between the subgraph of $G$ consisting of $e$ and $t(e), o(e)$, and the subgraph of $H$ consisting of $w$ and $t(w), o(w)$.

Suppose that for a connected subgraph $W$ of $G$, we have specified $\psi$ on edges of $\vphi(W)$ and extended $\psi\vphi$ on vertices so as to be an isomorphism between $W$ and its image. Choose an edge $\ell$ adjacent to a vertex $v$ in $W$. Let its other end be $p$ (which is possibly also in $W$). Without loss of generality, let $\ell$ point at $p$ and suppose that $\vphi$ does not reverse $\ell$. 

By $(1) \Rightarrow (4)$ in \cref{extended torelli} we have that $\vphi_*(P_p) \equiv P_r$ for some vertex $r \in H$. We also have \begin{align*}
	\vphi_*(P_p) &= \vphi_*(P_v) + \vphi_* h_\ell \\
	&= P_{\psi\vphi(v)} + h_{\vphi(\ell)}
\end{align*}
Here $\vphi_*(P_v) = P_{\psi\vphi(v)}$ is a consequence of our inductive hypothesis. Applying $\iota_{\hat w}\inv$ to $P_r - P_{\psi\vphi(v)} = h_{\vphi(\ell)}$, we obtain $r - \psi\vphi(v) \sim t(\vphi(\ell)) - o(\vphi(\ell))$. 

By \cref{series ambiguity}, there is an edge between $\psi\vphi(v)$ and $r$ which is in series with $\vphi(\ell)$. Set $\psi\vphi(\ell)$ to be this edge. We now need to show that setting $\psi\vphi(p) = r$ extends $\psi\vphi$ to an isomorphism on $W \cup \{\ell\} \cup p$. 

The only issue occurs if $p$ was already in $W$, in which case we need to show that $\psi\vphi(p)$ is already $r$. Since by hypothesis $W$ is connected, there must be some simple cycle $Q = u_1, \dots, u_m, \ell$ inside $W \cup \{\ell\}$, which means $\vphi(Q)$ forms a simple unordered cycle in $H$. Since $\psi$ only permutes within series classes, we also have that $\psi\vphi(Q)$ is a simple unordered cycle. There can be only one pair of vertices in $H$ which $\psi \vphi(\ell)$ can go between to complete $\psi\vphi(\{u_1, \dots, u_m\})$ to a simple unordered cycle, specifically the vertices adjacent to exactly one edge in $\psi\vphi(\{u_1, \dots, u_m\})$. It therefore must be that $\psi \vphi(\ell)$ goes between $\psi\vphi(v)$ and $r$, as desired.
\end{proof}

\begin{cor}
If $G$ is $3$-edge connected, then $\vphi$ is rigid if and only if $\vphi$ is an edge isomorphism.
\end{cor}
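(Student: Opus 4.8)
The plan is to deduce this directly from \cref{geometric rigidity}, showing that the $3$-edge connectedness hypothesis forces the series fixing automorphism $\psi$ appearing there to be the identity, so that rigidity of $\vphi$ becomes literally the condition that $\vphi$ itself is an edge isomorphism.

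First I would dispatch the easy direction. If $\vphi$ is an edge isomorphism, take $\psi = \id$: this is trivially series fixing, and $\psi \circ \vphi = \vphi$ is an edge isomorphism, so \cref{geometric rigidity} gives that $\vphi$ is rigid. For the converse, suppose $\vphi$ is rigid and let $\psi$ be the series fixing automorphism of $(M(H), w)$ provided by \cref{geometric rigidity}, so that $\psi \circ \vphi$ is an edge isomorphism. The key observation is that being in series is detected entirely by the set of simple cycles through an edge, which is preserved by any cyclic bijection; hence $\vphi$ carries series classes of $G$ bijectively onto series classes of $H$. Since $G$ is $3$-edge connected it has no series edges, so every series class in $G$ is a singleton, and therefore so is every series class in $H$. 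A series fixing automorphism must send each series class into itself, and when every such class is a singleton this forces $\psi(\ell) = \ell$ for all $\ell$, i.e., $\psi = \id$. Consequently $\vphi = \psi \circ \vphi$ is an edge isomorphism.

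The one point requiring care, and the only real content beyond citing the main theorem, is the claim that $H$ inherits $3$-edge connectedness from $G$. This is where I would lean on the stated characterizations: a $2$-edge connected graph is $3$-edge connected exactly when it has no series edges, and two edges are in series precisely when they lie in the same simple cycles. Because $\vphi$ preserves simple unordered cycles, it preserves the ``same set of simple cycles'' relation, so the series partitions of $M(G)$ and $M(H)$ correspond under $\vphi$; triviality of the partition for $G$ then transfers to $H$. No other step presents a genuine obstacle, since the substantive work has all been absorbed into \cref{geometric rigidity}.
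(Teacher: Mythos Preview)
Your argument is correct and is precisely the reasoning the paper leaves implicit: the corollary is stated without proof, as it follows immediately from \cref{geometric rigidity} together with the observation that a $3$-edge connected graph has only singleton series classes, forcing the series fixing automorphism $\psi$ to be the identity. Your additional remark that the series partition is preserved by cyclic bijections (so that $H$ also has only singleton series classes) is exactly the one point needing justification, and you handle it correctly via the characterization of series edges by common simple cycles.
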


\begin{cor} \label{orientation is irrelevant}
Rigidity is independent of the choice of base orientations $\Gamma_1$ and $\Gamma_2$.
\end{cor}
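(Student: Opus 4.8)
The plan is to read off the result directly from the geometric characterization of rigidity in \cref{geometric rigidity}, whose statement makes no reference to the orientations. First I would fix the underlying based graphs $(G, e)$ and $(H, w)$ together with the cyclic bijection $\vphi$, and observe that since the forgetful functor $\orCyc \to \Cyc$ is full and faithful, the very same edge bijection $\vphi$ is a morphism in $\orCyc$ for every choice of an orientation $\Gamma_1$ on $G$ and $\Gamma_2$ on $H$. Thus it genuinely makes sense to compare whether $\vphi$ is rigid with respect to $(\Gamma_1, \Gamma_2)$ against whether it is rigid with respect to some other pair $(\Gamma_1', \Gamma_2')$, and this comparison is exactly what must be shown to be insensitive to the orientation.

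Next I would apply \cref{geometric rigidity} to each choice of orientations. For $(\Gamma_1, \Gamma_2)$ it tells us that $\vphi$ is rigid if and only if there is a series fixing automorphism $\psi \colon (M(H), w) \to (M(H), w)$ with $\psi \vphi$ an edge isomorphism; applying the same theorem to $(\Gamma_1', \Gamma_2')$ yields the identical criterion. The key point is that every ingredient of this criterion is orientation-free: the graphic matroids $M(G)$ and $M(H)$ and the cyclic bijection $\vphi$ between them depend only on the underlying undirected graphs, the series classes of \cref{series ambiguity} are defined purely through connectivity of $G \bs \{e, \ell\}$, the notion of a series fixing automorphism refers only to these classes, and being an edge isomorphism is the property of extending to an isomorphism of the underlying based graphs. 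Since none of these data change when $\Gamma_1, \Gamma_2$ are replaced by $\Gamma_1', \Gamma_2'$, the right-hand condition has the same truth value for both choices, and therefore so does rigidity.

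There is essentially no computational obstacle here; the entire content is the recognition that \cref{geometric rigidity} has already repackaged rigidity---originally defined through the orientation-dependent diagram of \cref{rigidity definition} and the maps $\vphi_*$, $\vphi_{\mathbb O}$, $\iota_{\hat e}$---into a statement about matroids and underlying graphs alone. The only point requiring a little care is that the genus hypothesis of \cref{geometric rigidity} must be in force, so I would phrase the corollary under the standing assumption that $G$ and $H$ have genus at least two, matching the theorem it invokes.
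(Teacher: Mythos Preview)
Your proposal is correct and matches the paper's approach exactly: the corollary is placed immediately after \cref{geometric rigidity} with no separate proof, precisely because that theorem's characterization of rigidity involves only the matroid, series classes, and edge isomorphisms, none of which see $\Gamma_1$ or $\Gamma_2$. Your observation about the standing genus hypothesis is also appropriate, as the paper inherits it from \cref{geometric rigidity}.
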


\section{Applications, examples, and relations to other results} \label{applications examples and relations to other results}

\subsection{Examples} \label{examples}
We now describe two examples which demonstrate the content of our Torelli results. In both cases we first determine the rigidity divisor of a morphism in $\orCyc$. The first example is rigid, and we indicate the series fixing automorphism our theory says exists. The second example is nonrigid, and we will use our theory to produce an element of the theta divisor which is not preserved.

We begin by extending \cref{computation of pushforward} (see also \cref{computation of pushforward extended}), which concerns the following objects in \orCyc.

\[\begin{tikzcd}
	& {v_2} && {v_1} && {w_2} && {w_1} \\
	{(\hat G, \Gamma_1, \hat e_1)} &&&&&&&& {(\hat H, \Gamma_2, \hat w_1)} \\
	& {v_3} && {v_5} && {w_3} && {w_5} \\
	&& {v_4} &&&& {w_4}
	\arrow["{e_3}"{description}, from=1-4, to=1-2]
	\arrow["{e_4}"{description}, curve={height=12pt}, from=1-2, to=3-2]
	\arrow["{e_5}"{description}, curve={height=-12pt}, from=1-2, to=3-2]
	\arrow["{e_1}"{description}, curve={height=-12pt}, from=3-4, to=1-4]
	\arrow["{e_2}"{description}, curve={height=-12pt}, from=1-4, to=3-4]
	\arrow["{e_7}"{description}, from=3-4, to=4-3]
	\arrow["{e_6}"{description}, from=3-2, to=4-3]
	\arrow["{r_7}"{description}, from=1-8, to=1-6]
	\arrow["{r_2}"{description}, curve={height=-12pt}, from=1-6, to=3-6]
	\arrow["{r_6}"{description}, from=3-6, to=4-7]
	\arrow["{r_3}"{description}, from=4-7, to=3-8]
	\arrow["{r_4}"{description}, curve={height=-12pt}, from=3-8, to=1-8]
	\arrow["{r_5}"{description}, curve={height=-12pt}, from=1-8, to=3-8]
	\arrow["{r_1}"{description}, curve={height=-12pt}, from=3-6, to=1-6]
\end{tikzcd}\]

There is a cyclic bijection $\vphi$ with $e_i \mapsto r_i$. We previously computed that $\vphi_*: C^1(G, \R) \to C^1(H, \R)$ is given by the matrix \[ \begin{bmatrix}
1 & 0 & 0 & 0 & 0 & 0 & 0 \\
0 & 1 & 0 & 0 & 0 & 0 & 0 \\
0 & 0 & -1 & 0 & 0 & 0 & 0 \\
0 & 0 & 0 & -1 & 0 & 0 & 0 \\
0 & 0 & 0 & 0 & 1 & 0 & 0 \\
0 & 0 & 0 & 0 & 0 & -1 & 0 \\
0 & 0 & 0 & 0 & 0 & 0 & 1 
\end{bmatrix}  \]
We now compute the rigidity divisor. We have \begin{align*}
    \Gamma_1 \xrightarrow{c} v_3 + v_4 \xrightarrow{\iota_{\hat e}} (h_{e_3} + h_{e_5} - h_{e_1} + h_{e_7}, 2) \xrightarrow{(\vphi_* )} (-h_{r_3} + h_{r_5} - h_{e_1} + h_{e_7}, 2)\\
    \Gamma_2 \xrightarrow{c} w_2 + w_5 \xrightarrow{\iota_{\hat w}} (-h_{r_7} + h_{r_5}, 2)
\end{align*}
Hence $\mc E_\vphi = -h_{r_1} + 2h_{r_7} + h_{r_4} + h_{r_6} \equiv -h_{r_1} + h_{r_6} + h_{r_3} + h_{r_4} + h_{r_7} \equiv 0$. \Cref{geometric rigidity} tells us how to construct the series fixing automorphism $\psi$ for which $\psi\vphi$ is an edge isomorphism: $\psi$ swaps $r_3$ and $r_7$ and leaves each other edge in place.

We now examine a nonrigid example. Consider the following objects in \orCyc.

\[\begin{tikzcd}
	& {v_3} && {v_2} && {w_3} && {w_2} \\
	{(\hat J, \Gamma_3, \hat e_1)} &&&&&&&& {(\hat K, \Gamma_4, \hat r_1)} \\
	& {v_4} && {v_1} && {w_4} && {w_1}
	\arrow["{r_6}"{description}, curve={height=-12pt}, from=3-8, to=3-6]
	\arrow["{r_5}"{description}, curve={height=-12pt}, from=3-6, to=3-8]
	\arrow["{r_2}"{description}, curve={height=12pt}, from=1-6, to=3-6]
	\arrow["{r_3}"{description}, curve={height=12pt}, from=3-6, to=1-6]
	\arrow["{r_4}"{description}, from=1-8, to=1-6]
	\arrow["{r_1}"{description}, from=3-8, to=1-8]
	\arrow["{e_1}"{description}, from=3-4, to=1-4]
	\arrow["{e_5}"{description}, curve={height=12pt}, from=3-4, to=3-2]
	\arrow["{e_6}"{description}, curve={height=-12pt}, from=3-4, to=3-2]
	\arrow["{e_2}"{description}, curve={height=12pt}, from=1-4, to=1-2]
	\arrow["{e_3}"{description}, curve={height=-12pt}, from=1-4, to=1-2]
	\arrow["{e_4}"{description}, from=1-2, to=3-2]
\end{tikzcd}\]

We have a morphism $\rho: (\hat J, \Gamma_1, \hat e_2) \to (\hat K, \Gamma_3, \hat r_1)$ which again is defined by $e_i \mapsto w_i$ for each $i$. Our matrix this time is as follows.\[ \begin{bmatrix}
1 & 0 & 0 & 0 & 0 & 0 \\
0 & 1 & 0 & 0 & 0 & 0 \\
0 & 0 & -1 & 0 & 0 & 0 \\
0 & 0 & 0 & 1 & 0 & 0 \\
0 & 0 & 0 & 0 & -1 & 0 \\
0 & 0 & 0 & 0 & 0 & 1 
\end{bmatrix}  \]

We have \begin{align*}
    \Gamma_1 \xrightarrow{c} v_3 + 2v_4 - v_1 \xrightarrow{\iota_{\hat e}} (h_{e_5} + 2h_{e_2} + h_{e_4}, 2) \xrightarrow{(\rho_* )} (-h_{r_5} + 2h_{r_2} + h_{r_4}, 2)\\
    \Gamma_3 \xrightarrow{c} w_3 + w_4 \xrightarrow{\iota_{\hat w}} (2h_{r_4} + h_{r_2}, 2)
\end{align*}

The rigidity divisor is therefore $\mc E_\rho = 3h_{r_2} - h_{r_4} + h_{r_3}$. Divisor calculations show that $\iota_{\hat w}\inv(\mc E_\rho) = w_2 + 3w_3 - 4w_4 \not \sim 0$. 

Since the rigidity divisor is nonvanishing, there must be an element of the discrete theta divisor which $\rho_*$ does not preserve. Our argument tells us how to find a witness to this fact. We have that $\iota_{\hat w}\inv(\mc E_\rho) \sim w_3 + w_1 - 2w_4$. This is a ``$q$-reduced'' form which is convienent for computations \cite{baker_chip-firing_2013}. We observe that $(w_3 + w_1 - 2w_4) + w_4 < w_1 + w_2 + w_3 - w_4$, which is the Chern class of the acyclic orientation specified by $w_4 \prec w_3 \prec w_1 \prec w_2$ (see \cref{ineffective is dominated by acyclic}). So $w_2 + w_4$ is effective, but $(w_2 + w_4) + (w_3 + w_1 - 2w_4)$ is not linearly equivalent to anything effective. We compute that $c\rho_{\mathbb O}\inv \OO(w_2 + w_4) = v_1 + v_4$. Since \begin{align*}\rho_*(S^2_{\hat e_1}(v_1 + v_4), 2) &= \rho_*(\iota_{\hat e_1}(v_1 + v_4)) \\
	&= \rho_*(\iota_{\hat e_1}c\rho_{\mathbb O}\inv \OO(w_2 + w_4)) \\
	&= (w_2 + w_4) + \mc E_\rho \\
	&= (w_2 + w_4) + (w_3 + w_1 - 2w_4) \end{align*} 
	so $S^2_{\hat e_1}(v_1 + v_4)$ is one of the desired elements of the discrete theta divisor.

\subsection{Relation to Whitney's $2$-isomorphism theorem} \label{relation to whitney 2 iso theorem}

In this subsection, we describe a consequence of rigidity which is closely related to the proof of Whitney's 2-isomorphism theorem \cite{truemper_whitneys_1980}, suggesting a conjecture.

\begin{defn}
	A \textit{weak arch} in a graph $G$ is defined in the same way as an arch, except that either the weak arch or its complement must contain three vertices, rather than both being required to have three vertices. We can define Whitney moves on weak arches in precisely the same way as before.
\end{defn}

The relevant difference between an arch and a weak arch is that Whitney moves given by weak arches can be used to arbitrarily reverse the orientations of individual edges. This would trivialize \cref{is orientation preserving possible}, so before we preferred arches, but here our interest is different and our statements are cleaner with weak arches.

\begin{defn}
Let $G$ be a $2$-connective graph and $X_1, \dots, X_n$ be a collection of weak arches with the following properties. \begin{enumerate}
    \item The $X_i$ cover $G$.
    \item Any two distinct $X_i$ intersect at most at their tips.
    \item The graph obtained by quotienting each $X_i$ to an edge is cyclic.
\end{enumerate}
Then the $X_i$ are a \textit{generalized cycle}, and the $X_i$ are a \textit{generalized cycle decomposition} of $G$. We say the decomposition is \textit{in order} or \textit{ordered} if $X_1, \dots, X_n$ enumerates the $X_i$ in the order in which they appear as a cycle. We say that the decomposition is \textit{strong} if each $X_i$ is $2$-connected and the decomposition is in order.
\end{defn}

\begin{remark} \label{removing arches}
A useful property of a generalized cycle decomposition is that removing any weak arch (but not its tips) does not disconnect the graph, but removing any two weak arches (but leaving their tips) does disconnect the graph.
\end{remark}

\begin{prop} \label{refine cycle decomposition}
Let $X_1, \dots, X_n$ be a generalized cycle decomposition in order. Then this decomposition can be refined to a strong decomposition.
\end{prop}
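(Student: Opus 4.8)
The plan is to refine each weak arch $X_i$ independently into its \emph{blocks} (its maximal $2$-connected subgraphs, counting a single edge as a block) and then list all of these blocks in the cyclic order inherited from the $X_i$. The entire argument rests on one structural claim about a single weak arch; granting that claim, verifying that the result is a strong decomposition is routine bookkeeping.

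First I would establish the key lemma: for a (connected) weak arch $X$ with tips $v,w$ sitting inside the $2$-connected graph $G$, the block-cut tree of $X$ is a single path whose two end blocks contain $v$ and $w$, neither of which is itself a cut vertex of $X$. To see that $v$ is not a cut vertex of $X$, suppose it were; then some component $C$ of $X - v$ avoids $w$, and since every path in $G$ leaving $X$ must do so through a tip, every path from $C$ to the rest of $G$ would pass through $v$, so deleting $v$ would disconnect $G$, contradicting $2$-connectedness. To see that the block-cut tree is a path, note that any leaf block $B$ is attached to the rest of $X$ at a single cut vertex $c$; if $B$ contained neither tip, then $B \setminus \{c\}$ could reach the rest of $G$ only through $c$, again contradicting $2$-connectedness. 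Hence every leaf block contains one of the two tips, so there are at most two leaf blocks, forcing the block-cut tree to be a path with $v,w$ in its end blocks. This lemma, and the use of $2$-connectedness it relies on, is the one genuine obstacle; everything after it is formal.

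Next I would define the refinement. Writing $a_i,a_{i+1}$ for the tips of $X_i$ (with $a_{i+1}$ the tip shared with $X_{i+1}$), I replace $X_i$ by the sequence $B_1^{(i)}, \dots, B_{m_i}^{(i)}$ of its blocks listed along the block path from $a_i$ to $a_{i+1}$, and I list all blocks in the induced cyclic order. Each block is $2$-connected, a single-edge block being $2$-connected under the connectivity convention of the paper (deleting one of its two vertices leaves a single vertex). I would then check the axioms of a generalized cycle decomposition. Covering is immediate. For the intersection axiom, two blocks of the same $X_i$ meet only at a common cut vertex, which is a tip of each, while blocks coming from distinct arches meet at most at a shared arch-tip. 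Each block $B$ is itself a weak arch: its two boundary vertices (two consecutive cut vertices, or a cut vertex together with an original tip) comprise $B \cap B^*$; the complement $B^*$ is connected because those two boundary vertices remain joined through the other blocks of $X_i$ and through $X_i^*$; and one of $B$, $B^*$ has at least three vertices, since if $B$ is a single edge then $B^*$ carries all of $V(G)$ (and $G$ has more than two vertices in every case of interest). Finally, quotienting each block to an edge merely subdivides the edges of the original quotient cycle of the $X_i$, so the new quotient is again a cycle.

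The resulting decomposition thus has $2$-connected pieces and is in order: refining each $X_i$ into its blocks from $a_i$ to $a_{i+1}$ and concatenating respects the original cyclic ordering, so the hypothesis that $X_1,\dots,X_n$ is in order passes to the refinement. Hence the refinement is strong, as required. As noted, the only substantive step is the block-cut path lemma; the remaining verifications amount to tracking tips and cut vertices.
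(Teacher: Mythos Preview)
The paper states this proposition without proof, so there is no approach to compare against. Your argument via the block decomposition of each $X_i$ is correct and is the natural route. The key lemma---that the block--cut tree of a weak arch in a $2$-connected graph is a path whose end blocks contain the two tips, neither tip being a cut vertex---is sound, and you have correctly identified it as the only substantive step; the remaining verifications are indeed bookkeeping.

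One small point: your parenthetical ``$G$ has more than two vertices in every case of interest'' is doing real work when you check the weak-arch condition for a single-edge block. You can make this unconditional: the very existence of a generalized cycle decomposition forces $|V(G)| \geq 3$, since some $X_i$ or its complement already has three vertices. With that observation the case split is clean: if a block $B$ has at least three vertices we are done, and if $B$ is a single edge then $V(B^*) = V(G)$.
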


The relevance to cyclic bijections is as follows. The statement here is a little stronger than what appears in \cite{truemper_whitneys_1980}, but follows easily from it.

\begin{theorem} \textit{(Truemper)} \label{cyclic bijections respect strong decompositions}
Let $G$ and $H$ be $2$-connective graphs and let $\vphi: M(G) \to M(H)$ be a cyclic bijection. Let $G$ be equipped with a strong generalized cycle decomposition $X_1, \dots, X_n$. Let $\vphi(X_i)$ denote the smallest subgraph containing the images of all edges in $X_i$. Then there exists a unique reordering $\tau$ of $2, 3, \dots, n$ such that $\vphi(X_1), \vphi(X_{\tau(2)}), \dots, \vphi(X_{\tau(n)})$ is a strong generalized cycle decomposition of $H$ and $\tau(2) \leq \tau(n)$.
\end{theorem}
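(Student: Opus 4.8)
The plan is to read the existence of the reordering $\tau$ out of Truemper's proof of \cref{whitney 2 iso}, and then supply the two genuinely new ingredients: that the \emph{strong} (that is, $2$-connected and in-order) structure is preserved, and that the normalization $\tau(2) \le \tau(n)$ pins $\tau$ down uniquely. The starting observation is a dichotomy for the simple cycles of $G$. Because collapsing each $X_i$ to an edge produces a single $n$-cycle, whose only cycle is the whole of itself, and because consecutive pieces meet in exactly one tip, every simple cycle of $G$ is of exactly one of two kinds: either it lies entirely inside a single $X_i$, or it traverses each $X_i$ once along a tip-to-tip path, closing up all the way around the necklace. This is the combinatorial content of \cref{removing arches}, and since a cyclic bijection preserves simple unordered cycles, the dichotomy is the invariant we will transport through $\vphi$.

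First I would show that $\vphi$ carries the edge partition $E(G) = E(X_1) \sqcup \cdots \sqcup E(X_n)$ to the partition of $E(H)$ whose classes are exactly the $E(\vphi(X_i))$, and that each $\vphi(X_i)$ is $2$-connected. The key point is that the block partition is a matroid invariant: two edges of $G$ lying in a common simple cycle that avoids some piece must be in a common block, whereas a cycle through edges of distinct blocks is forced to meet every block, so the partition can be characterized purely through the cycle structure of $M(G)$. Once this is established, $\vphi$ restricts to cyclic bijections $M(X_i) \to M(\vphi(X_i))$; since $M(X_i)$ is a connected matroid (as $X_i$ is $2$-connected) and matroid isomorphism preserves connectivity, each $\vphi(X_i)$ is again $2$-connected. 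Collapsing each $\vphi(X_i)$ reproduces a cyclic quotient of $H$ with the pieces meeting only at images of tips, so the $\vphi(X_i)$ form a generalized cycle decomposition of $H$; applying \cref{refine cycle decomposition} where needed upgrades it to a strong one.

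With the image decomposition in hand, I would anchor it at $\vphi(X_1)$, the piece containing the image of the base edge, and use the cyclic order supplied by the quotient $n$-cycle of $H$. Reading this cyclic order starting from $\vphi(X_1)$ in each of its two directions yields two linear listings of the remaining pieces that are reverses of one another, say $(\vphi(X_{\tau(2)}), \dots, \vphi(X_{\tau(n)}))$ and its reversal; each is a strong, in-order decomposition beginning with $\vphi(X_1)$. Because the two listings differ only by reversal, exactly one of them satisfies $\tau(2) \le \tau(n)$, the indices $\tau(2)$ and $\tau(n)$ being distinct for $n \ge 3$ while the cases $n \le 2$ are immediate. This selects a unique $\tau$ and completes the argument.

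The hard part will be the middle step, namely proving that the block partition is preserved: one must rule out that $\vphi$ \emph{splits} a single block across several image pieces or \emph{merges} distinct blocks. The cleanest route is to track the factorization of $\vphi$ from \cref{whitney 2 iso} into Whitney moves followed by an edge isomorphism. An edge isomorphism transports a strong decomposition to a strong decomposition, reversing the cyclic order at worst, and one verifies that each Whitney move either reflects a contiguous run of pieces of the necklace or acts within a single piece, in both cases preserving the generalized cycle structure. Establishing this invariance, rather than the subsequent bookkeeping that produces $\tau$ and its uniqueness, is where the genuine work lies and where the dependence on Truemper's argument is essential.
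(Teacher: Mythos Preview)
The paper does not prove this statement itself; it simply cites \cite{truemper_whitneys_1980}. Your final paragraph is exactly the right place to put the weight: factoring $\vphi$ via \cref{whitney 2 iso} and checking that each Whitney move either lies inside a single $X_i$ or reflects a contiguous run of the $X_i$ is precisely Truemper's inductive mechanism, and your normalization $\tau(2)\le\tau(n)$ correctly pins down the unique ordering among the two readings of the cyclic order on the image pieces.

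The only soft spot is your second paragraph. Transporting the edge partition $E(G)=\bigsqcup_i E(X_i)$ through $\vphi$ and deducing from the circuit dichotomy that each $M(\vphi(X_i))$ is connected, hence each $\vphi(X_i)$ is $2$-connected, is fine. But the further claim that the $\vphi(X_i)$ already sit in $H$ as weak arches meeting only at tips, with cyclic quotient, is a statement about the vertex structure of $H$ and does not drop out of the circuit data alone. A strong generalized cycle decomposition is not canonical---a given $G$ carries many, since the pieces are only required to be $2$-connected---so no purely matroidal predicate recovers the particular one you started from, and in particular \cref{refine cycle decomposition} cannot be invoked until you already know the $\vphi(X_i)$ form a generalized cycle decomposition. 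Your Whitney-move argument in the last paragraph is what actually establishes this geometric arrangement, so route the whole proof through it from the start rather than treating it as a fallback.
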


\begin{proof}
    \cite{truemper_whitneys_1980}.
\end{proof}

In other words, for a each strong generalized cycle decomposition, we have a corresponding decomposition on $H$, and up to a permutation $\vphi$ respects the decomposition structures. Note that $\vphi$ does not necessarily respect the adjacency relations of edges within a particular weak arch; we only know that each weak arch is preserved as a whole. Given the above statement, we are so close to Truemper's proof of Whitney's 2-isomorphism theorem that we are liable to bump our noses on it: with an induction hypothesis on the number of edges, one uses Whitney moves to permute the $X_i$, then for each $X_i$, treats all the other $X_j$ as a single edge and applies the hypothesis.

Our next step is to relate generalized cycles to series classes.

\begin{lemma}
Let $G$ be a $2$-connective and $2$-edge connected graph on at least three vertices, with a strong generalized cycle decomposition $X_1, \dots, X_n$. Pick an edge $r \in G$. At least one of the following holds. \begin{enumerate}
    \item For some $X_i$, we have that $M(r) \subseteq X_i$.
    \item For each $\ell \in \varepsilon(r)$, there is some $X_i$ with $X_i = \{ \ell \}$. Further, no other edge appears as a singleton among the $X_i$. 
\end{enumerate}
\end{lemma}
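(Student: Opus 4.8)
The plan is to reduce the whole statement to one clean dichotomy about singleton arches versus larger arches, using the characterization recalled just before the lemma: in a $2$-edge connected graph two edges are in series exactly when they lie in the same simple cycles. Throughout I write $C = \varepsilon(r)$ for the series class and read the $M(r)$ of item (1) as this class $C$. Since the $X_i$ cover $G$ and pairwise meet only at their tips, each edge lies in exactly one arch, so the arches partition $E(G)$. If $|C| = 1$ then item (1) is immediate, as $C = \{r\}$ lies in whichever arch contains $r$; hence I may assume $|C| = k \geq 2$, i.e.\ $r$ is a series edge.

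The first observation I would establish is: \emph{if two edges $e, f$ lie in distinct arches and are in series, then each of their arches is a single edge.} Indeed, if the arch $X_a$ containing $e$ were not a single edge, then, being $2$-connected, it would be bridgeless, so $e$ would lie on a simple cycle $Q \subseteq X_a$; since $f \notin X_a$ this cycle avoids $f$, contradicting that $e$ and $f$ lie in the same simple cycles. The converse observation is equally central: \emph{two distinct singleton arches $\{e\}$ and $\{f\}$ are always in series.} For this I would contract each arch to an edge to obtain the length-$n$ cycle $\bar G$ of the decomposition; removing the two cycle-edges corresponding to $\{e\}$ and $\{f\}$ splits $\bar G$ into two vertex-disjoint arcs. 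Because the arches cover $G$ and meet only at their tips, these arcs lift to connected subgraphs $A, B$ of $G$ with $A \cap B = \emptyset$ and $A \cup B = G \setminus \{e,f\}$, so $G \setminus \{e,f\}$ is disconnected and $e, f$ are in series.

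Granting these two facts, the dichotomy follows. If all of $C$ lies in a single arch $X_i$, we are in case (1). Otherwise there are $c, c' \in C$ in distinct arches, so the first observation forces both of those arches to be singletons. Now any $c'' \in C$ with $c'' \neq c$ lies in an arch different from the singleton $\{c\}$, hence, by the first observation again, sits in its own singleton arch; thus every edge of $C$ is a singleton among the $X_i$, which is the first sentence of (2). Finally, if some $g \notin C$ were also a singleton arch, then $g$ and $c$ would be distinct singleton arches, so by the converse observation $g \in \varepsilon(r) = C$, a contradiction; this rules out any other singleton and finishes (2).

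I expect the main obstacle to be pinning down the converse observation rigorously, since it requires that contracting the arches genuinely yields the cycle $\bar G$ and that removing two of its edges lifts to an honest disconnection of $G$; this is exactly where the hypotheses that the $X_i$ cover $G$, meet only at their tips, and quotient to a cycle all get used (cf.\ the necklace picture and \cref{removing arches}). The forward observation, by contrast, is a short ``cycle through an edge'' argument once one notes that each arch which is not a single edge is $2$-connected, hence bridgeless.
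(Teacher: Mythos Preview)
Your proof is correct and takes a genuinely different route from the paper's. The paper invokes Truemper's \cref{cyclic bijections respect strong decompositions}: since any permutation of $\varepsilon(r)$ (identity elsewhere) is a cyclic bijection, that theorem forces each arch meeting $\varepsilon(r)$ to be a singleton, for otherwise one could permute a single series edge out of a larger arch and violate preservation of the decomposition. Your first observation replaces this appeal to Truemper by a direct local argument: in a strong decomposition each $X_i$ is $2$-connected, so a non-singleton arch is bridgeless and contains a cycle through any of its edges, immediately contradicting series with an edge outside the arch. Both proofs then finish the same way, using \cref{removing arches} to see that any two singleton arches are in series. Your approach is more elementary and self-contained; the paper's approach has the mild advantage of illustrating exactly how the Truemper machinery interacts with series classes, which is thematically relevant in that subsection. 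Your reading of $M(r)$ as $\varepsilon(r)$ is the intended one.
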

\begin{proof}
Suppose that not all the edges in $\varepsilon(r)$ are contained in the same $X_i$. Notice that if $\vphi: E(G) \to E(G)$ is any permutation on $\varepsilon(r)$, and the identity on all other edges, then $\vphi$ is a cyclic bijection. By \cref{cyclic bijections respect strong decompositions}, it follows that any $X_i$ which contains an edge in $\varepsilon(r)$ is a singleton, for otherwise we could permute out only a single edge of $\varepsilon(r)$ from $X_i$. Since removing any two singleton arches (but leaving their tips) disconnects $G$, any two edges appearing as singletons in the decomposition must be in series.
\end{proof}

\begin{theorem}
Let $\vphi: (\hat G, \hat e) \to (\hat H, \hat w)$ be a rigid morphism in $\orCyc$ between graphs of genus at least two. Then for every strong generalized cycle decomposition $X_1, \dots, X_n$, the induced reordering $\tau$ of \cref{cyclic bijections respect strong decompositions} is the identity, except that if some series class has its edges as singletons among the $X_i$, then $\tau$ may permute them.
\end{theorem}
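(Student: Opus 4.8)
The plan is to deduce the statement by feeding the conclusion of \cref{geometric rigidity} into \cref{cyclic bijections respect strong decompositions}, so that the rigidity of $\vphi$ becomes tight control over how the underlying matroid isomorphism scrambles a strong decomposition. First I would apply \cref{geometric rigidity} to produce a series fixing automorphism $\psi$ of $(M(H), \hat w)$ for which $\Psi := \psi\vphi$ is an edge isomorphism, hence extends to a genuine graph isomorphism $G \to H$. Since a graph isomorphism transports the quotient-cycle structure verbatim, $\Psi(X_1), \dots, \Psi(X_n)$ is \emph{already} a strong generalized cycle decomposition of $H$ in the given cyclic order; equivalently, the reordering that \cref{cyclic bijections respect strong decompositions} attaches to $\Psi$ is the identity. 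The whole problem is then to compare the reordering $\tau$ attached to $\vphi$ against this identity.

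The key device is the automorphism $\sigma := \vphi\inv \psi \vphi$ of $M(G)$. Because cyclic bijections preserve the relation of lying in a common family of simple cycles, $\vphi$ matches series classes of $G$ with series classes of $H$, so $\sigma$ is series fixing on $G$, and from $\Psi = \vphi\sigma$ we get $\Psi(X_i) = \vphi(\sigma(X_i))$. I would then use the preceding lemma to pin down how $\sigma$ acts on the blocks. If $X_i$ is not a singleton, then for each edge $r \in X_i$ case (2) of that lemma cannot hold (it would force $r$ to be a singleton), so case (1) applies and the whole of $\varepsilon(r)$ lies in the unique block containing $r$, namely $X_i$; hence the series fixing $\sigma$ preserves the edge set of $X_i$ and fixes $X_i$. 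Meanwhile the singleton blocks are exactly the edges of one series class (case (2)), which $\sigma$ merely permutes among themselves, and $\sigma$ fixes the base edge so it fixes $X_1$.

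Feeding this back through $\Psi(X_i) = \vphi(\sigma(X_i))$ exhibits $\Psi(X_1), \dots, \Psi(X_n)$ as $\vphi(X_{\sigma'(1)}), \dots, \vphi(X_{\sigma'(n)})$, where $\sigma'$ is the index permutation induced by $\sigma$: it fixes $1$ and every non-singleton index and permutes only the singleton series-class indices. Thus $\sigma'$ is a reordering that realizes a valid strong generalized cycle decomposition of $H$, and by the uniqueness clause of \cref{cyclic bijections respect strong decompositions} it must coincide with $\tau$. This is precisely the claim: $\tau$ is the identity away from the singleton series class and at worst permutes those singletons among themselves.

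The step I expect to be the main obstacle is this final identification. \cref{cyclic bijections respect strong decompositions} normalizes its reordering by the condition $\tau(2) \le \tau(n)$, which resolves the reflection symmetry of the quotient cycle, and one must check that this convention selects $\sigma'$ itself rather than its global reversal, since the reversal would move \emph{non}-singleton positions and would be fatal to the statement. I would argue this by noting that the based structure fixes a traversal direction of the necklace, so that the sequence $\Psi(X_1), \dots, \Psi(X_n)$ runs in the same direction as the identity reordering of $\Psi$; as $\sigma'$ differs from the identity only in the singleton slots, it inherits that direction and hence the normalization. Carefully verifying this compatibility — above all when singleton series edges occupy the slots flanking the base block $X_1$ — is where the genuine care is required.
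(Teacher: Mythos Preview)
Your proposal is correct and follows essentially the same route as the paper, whose proof consists of the single sentence ``This follows from \cref{geometric rigidity} and the prior lemma''; you have simply unpacked those two ingredients in detail. The normalization worry you flag about the convention $\tau(2)\le\tau(n)$ when both $X_2$ and $X_n$ happen to be singleton series edges is a genuine subtlety that the paper's one-line proof likewise does not address.
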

\begin{proof}
This follows from \cref{geometric rigidity} and the prior lemma. 
\end{proof}

The converse is intuitively plausible, but seems more difficult to prove. We leave it as the following conjecture.

\begin{conjecture}
Let $\vphi: (\hat G, \hat e) \to (\hat H, \hat w)$ be a morphism in $\orCyc$ between graphs of genus at least two. Suppose that for every strong generalized cycle decomposition $X_1, \dots, X_n$, the induced reordering $\tau$ of \Cref{cyclic bijections respect strong decompositions} is the identity, except that if some series class has its edges as singletons among the $X_i$, then $\tau$ may permute them. Then $\vphi$ is rigid.
\end{conjecture}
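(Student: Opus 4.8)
The plan is to prove the contrapositive: if $\vphi$ is not rigid, I will exhibit a strong generalized cycle decomposition of $G$ whose induced reordering $\tau$ is genuinely nontrivial and not accounted for by permuting series singletons. By \cref{geometric rigidity}, non-rigidity means that no series-fixing automorphism $\psi$ of $(M(H), \hat w)$ makes $\psi\vphi$ an edge isomorphism; equivalently, $\vphi$ differs from a graph isomorphism by a nonempty collection of Whitney flips that cannot be absorbed into permutations within series classes. The first reduction, then, is to isolate from the Whitney factorization of \cref{whitney 2 iso} a single \emph{essential} flip — a Whitney move on an arch $X$, with tips $v, w$, whose reversal changes the $2$-isomorphism type in a way no series permutation can undo.

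I would make this precise using the pearls-on-a-necklace description following \cref{series ambiguity}: a series class of size at least two is a cyclic chain of $2$-connected pearls joined by its series edges, and \cref{cyclic bijections respect strong decompositions} shows that Whitney moves internal to such a chain realize exactly the series-fixing automorphisms. Thus the essential arch $X$ may be chosen so that it is not contained in a single series chain, and so that the tips $v$ and $w$ play genuinely different roles in $X$ — there is no automorphism of $X$ fixing its complement and interchanging $v$ with $w$.

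The core of the construction is to route a generalized cycle through the $2$-cut $\{v, w\}$ so that this $v/w$ asymmetry is recorded by $\tau$. Using \cref{refine cycle decomposition} I would build a decomposition adapted to the separation $G = X \cup X^*$: a strong chain of weak arches traversing $X$ from $v$ to $w$, concatenated with a strong chain traversing $X^*$ from $w$ back to $v$, giving a generalized cycle decomposition $X_1, \dots, X_n$ with the base edge in $X_1$. When $X$ contributes at least two weak arches, flipping $X$ reverses the order of that block while fixing the block inside $X^*$; this partial reversal forces a nonidentity $\tau$, which survives the normalization $\tau(2) \le \tau(n)$ precisely because only a proper sub-arc of the cycle is reversed. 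Comparing with the forward direction (the theorem preceding this conjecture), such a $\tau$ cannot be a mere permutation of series singletons, since the reversed block is $2$-connected rather than a collection of singleton arches.

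The main obstacle is the case where the essential piece $X$ is itself $3$-connected (an $R$-node in the Tutte decomposition) and so cannot be split into two $2$-connected weak arches: then any decomposition treats $X$ as a single indivisible weak arch, its flip does not move $X$ within the cyclic order, and the reordering must instead be detected through the rearrangement the flip induces on the pieces of $X^*$ adjacent to $v$ and $w$. Proving that the generalized cycle can always be routed so that this induced rearrangement produces a nonidentity $\tau$ — and verifying that it is never explainable by series singletons — is the delicate step, and I expect it is the reason the converse resists the direct argument available for \cref{geometric rigidity}. A subsidiary bookkeeping difficulty is ensuring throughout that the weak arch containing the base edge $\hat e$ is fixed, as required by the normalization in \cref{cyclic bijections respect strong decompositions}.
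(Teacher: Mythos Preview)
The paper does not prove this statement: it is explicitly left as a conjecture, with the author remarking that ``the converse is intuitively plausible, but seems more difficult to prove.'' There is therefore no proof in the paper to compare against.

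As for your proposal itself, you correctly identify the natural strategy and are honest about where it runs aground. The contrapositive framing is sensible, and isolating an ``essential'' Whitney flip via the Tutte/SPQR decomposition is the right instinct. But you do not actually close the argument: you yourself flag that when the essential arch is $3$-connected it appears as a single weak arch in any strong decomposition, and then the flip must be detected indirectly through rearrangement of the neighbouring pieces in $X^*$. You do not show that such a rearrangement can always be forced to produce a nonidentity $\tau$, nor that the resulting $\tau$ cannot be explained away as a permutation of series singletons. This is exactly the obstruction the paper alludes to, and your write-up is an outline with a named gap rather than a proof. A genuine resolution would likely require a careful case analysis over the SPQR tree of $H$, tracking how a nontrivial composite of Whitney moves acts on the attachment data at each $S$-node and $R$-node, and showing that some strong decomposition records this action in its cyclic order.
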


\subsection{Main Result As A Strengthening of the Existing Graph Torelli Theorem} \label{relation to cap torelli}

While as stated our result describes properties of a morphism in $\orCyc$, the existing graph Torelli theorem \Cref{capTorelli} begins with an isomorphism of Jacobians and produces isomorphisms of matroids. We now reframe our result in terms of this prior theorem. 

We first sketch how the matroid isomorphisms of the existing graph Torelli theorem are produced (for a description of how to do this in practice see \cite{dancso_construction_2016}). Begin with an isomorphism $\gamma^*: A(H) \to A(G)$. Orient both graphs; in principle the orientations are irrelevant but certain useful orientations simplify the argument. Dualize to obtain an isomorphism $\gamma: J(G) \to J(H)$. This lifts to an isomorphism $H^1(G, \R) \to H^1(H, \R)$, which sends each $h_{\ell}$ to some $\pm h_{a}$. This does not completely specify the desired bijection $\vphi$: by \cref{series functorial ambiguity} it only specifies a map $s$ from edges of $G$ to series classes in $H$. One then proceeds by showing that $s$ is well defined on each series class of $G$, and that for each $e$, the sets $\varepsilon(e)$ and $s(\varepsilon(e))$ actually have the same cardinality. Choosing a bijection for each such pair of series classes then yields a cyclic bijection $\vphi$ lifting $\gamma$ (that is, $\vphi_* = \gamma$).

We can see from this that lifts are completely specified up to series fixing automorphisms. Putting this together with our results yields the following.

\begin{theorem} \label{relation to prior torelli} \textit{(Torelli Theorem for Graphs)}
Let $\mc T$ be the category of isomorphisms between metric tori. Let $\gamma: J(G) \to J(H)$ be a morphism in $\mc T$, where $G$ and $H$ are $2$-connected and $2$-edge connected graphs of genus at least two. Then $\gamma = \vphi_*$ for some $\vphi: (\hat G, \hat e) \to (\hat H, \hat w)$ in $\orCyc$. For each such lift $\vphi$, we have that $\gamma(\theta_{\hat e}) = \theta_{\hat w}$ if and only if $\vphi$ is an edge isomorphism up to a series fixing automorphism of $(\hat H, \hat w)$. If $G$ and $H$ are $3$-edge connected, then there is a unique lift, and preservation of the discrete theta divisor is equivalent to this lift extending to a graph isomorphism.
\end{theorem}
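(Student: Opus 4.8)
The plan is to assemble this statement from the machinery already in place, handling its three assertions in turn and leaning heavily on \cref{capTorelli}, \cref{extended torelli}, and \cref{geometric rigidity}.

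First I would establish the existence of a lift. Since $\gamma\colon J(G)\to J(H)$ is an isomorphism and $J(G)\cong J(H)$ exactly when $A(G)\cong A(H)$, which by \cref{capTorelli} is equivalent to $M(G)\cong M(H)$, the two graphic matroids are isomorphic. Following the construction sketched immediately before the theorem, I would dualize $\gamma$ to an isometry $H^1(G,\R)\to H^1(H,\R)$ carrying each $h_\ell$ to some $\pm h_a$; by \cref{series functorial ambiguity} this data determines only a map $s$ sending edges of $G$ to series classes of $H$. The bookkeeping that $s$ is constant on each series class of $G$ and that $|\varepsilon(e)|=|s(\varepsilon(e))|$ is exactly the content of the prior construction, and choosing any edge bijection within each matched pair of series classes then produces a cyclic bijection $\vphi$ with $\vphi_*=\gamma$. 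Equipping $G$ and $H$ with the chosen base orientations makes $\vphi$ a morphism in $\orCyc$, as required.

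Next I would dispatch the equivalence in the general $2$-edge connected case by chaining the two main theorems. Fixing a lift $\vphi$, so that $\gamma=\vphi_*$, the condition $\gamma(\theta_{\hat e})=\theta_{\hat w}$ reads $\vphi_*(\theta_{\hat e})=\theta_{\hat w}$, which by \cref{extended torelli} is equivalent to rigidity of $\vphi$. By \cref{geometric rigidity}, rigidity of $\vphi$ is in turn equivalent to the existence of a series fixing automorphism $\psi$ of $(M(H),w)$ for which $\psi\vphi$ is an edge isomorphism, and this is precisely the statement that $\vphi$ is an edge isomorphism up to a series fixing automorphism. No further argument is needed for this claim.

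Finally I would treat the $3$-edge connected case. Here every series class is a singleton, so uniqueness of the lift follows from \cref{series functorial ambiguity}: given two lifts $\vphi_1,\vphi_2$, the automorphism $\vphi_2\vphi_1\inv$ of $(\hat H,\hat w)$ induces the identity on $J(H)$, hence is series fixing, hence fixes every edge, giving $\vphi_1=\vphi_2$. With the identity the only series fixing automorphism available, the previous equivalence collapses to ``preservation of $\theta$ if and only if $\vphi$ is an edge isomorphism,'' and by the very definition of edge isomorphism (equivalently the corollary to \cref{geometric rigidity}) this says exactly that the unique lift extends to a graph isomorphism. I expect the only genuine obstacle to lie in the first step: everything after it is a direct citation, and even there the heavy lifting is done by \cref{capTorelli}, so the real care is in verifying that the series-class bookkeeping produces a bijection inducing precisely $\gamma$ rather than merely some matroid isomorphism, and that base edges and orientations can be chosen compatibly so the outcome is an honest morphism in $\orCyc$.
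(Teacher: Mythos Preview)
Your proposal is correct and follows essentially the same approach as the paper, which does not give a formal proof but rather presents the theorem as the result of ``putting this together with our results'' after sketching the lifting construction in the preceding paragraph. You have accurately identified and assembled the relevant pieces: the lifting from the prior Torelli construction, the chain \cref{extended torelli}$\to$\cref{geometric rigidity} for the main equivalence, and \cref{series functorial ambiguity} for uniqueness in the $3$-edge connected case.
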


\subsection{Main Result As A Lifting Criterion For Isomorphisms of Graphic Matroids} \label{as matroid lifting}
We can also interpret our main result as telling us that we can use rigidity to check when an isomorphism $\phi: M(G) \to M(H)$ implies the existence of an honest graph isomorphism. 

We will sketch the procedure. Assume $G$ and $H$ are $2$-connected and $2$-edge connected graphs of genus at least two. Consider any orientations $\Gamma_1$ and $\Gamma_2$ on $G$ and $H$, respectively (by \cref{orientation is irrelevant}, the choice is irrelevant). Pick any base edge $\hat e_1$ in $\hat G$ and set $w := \phi(e_1)$ to be the base edge of $\hat H$. Let $\tilde \phi: (G, \Gamma_1, e) \to (H, \Gamma_2, w)$ be the morphism in $\orCyc$ induced by $\phi$ and our choices. 

Now if $\tilde \phi$ is not rigid, the failure of rigidity may be influenced by our choice of $e$, for the requirement that the series fixing automorphism on $M(H)$ fix $w$ can be an obstacle. So let $\varepsilon(w) = \{ w_1, \dots, w_n \}$ and let $\tau_i: (M(H), w) \to (M(H), w_i)$ swap $w$ and $w_i$. Then one of the compositions 
\[ (G, \Gamma_1, e) \xrightarrow{\tilde \vphi} (H, \Gamma_2, w) \xrightarrow{\tilde \tau_i} (H, \Gamma_2, w_i)\]
is rigid if and only if there exists a series fixing automorphism $\psi: M(H) \to M(H)$ such that 
\[ M(G) \xrightarrow{\varphi} M(H) \xrightarrow{\psi} M(H) \]
is an edge isomorphism (note the lack of base elements), and once we detect rigidity, we can reconstruct $\psi$. It would be interesting to see a careful analysis of the computational complexity of this process. 

\begin{conjecture}
When the chosen base edge $e$ is not a series edge, the above process can be carried out in at worst polynomial time in the number of vertices and edges.
\end{conjecture}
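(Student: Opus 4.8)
The plan is to observe that, once the base edge $e$ is not a series edge, the outer loop of the procedure collapses to a single rigidity test, and then to bound every remaining sub-computation by a standard polynomial-time subroutine. Since edges are in series exactly when they lie in the same simple cycles, a cyclic bijection preserves series classes; hence $\phi(\varepsilon(e)) = \varepsilon(w)$ and $|\varepsilon(w)| = |\varepsilon(e)|$. When $e$ is not a series edge this cardinality is $1$, so $\varepsilon(w) = \{w\}$, the only swap $\tau_i$ is the identity, and the whole procedure reduces to deciding whether $\tilde\phi$ itself is rigid. By \cref{extended torelli} this is the single test $\mc E_{\tilde\phi} \sim 0$. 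It therefore suffices to bound the cost of (i) constructing $\tilde\phi$, (ii) computing $\mc E_{\tilde\phi}$, (iii) running the equivalence test, and (iv) reconstructing $\psi$ when the test succeeds.

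For (i), I would follow the proof of \cref{functoriality} algorithmically. Fix a spanning tree of $G$ and take its $g$ fundamental cycles as a basis of $H^1(G,\Z)$; each maps under $\phi$ to a simple unordered cycle of $H$ computable in linear time. The requirement that $\tilde\phi_*$ carry each $\alpha(C_j)$ to an algebraic cycle of $H$ determines every $\sgn_{\tilde\phi}(\ell)$ up to one global sign per cycle, and fixing $\sgn_{\tilde\phi}(e)=1$ and propagating the global signs across overlapping cycles (a union-find/BFS traversal of the cycle-incidence structure) pins them all down. Because \cref{functoriality} guarantees a consistent solution exists, no backtracking is ever needed, so this stage runs in time polynomial in $|V|+|E|$, and it simultaneously yields the signed-permutation matrix of $\tilde\phi_*$.

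For (ii) and (iii), each term of $\mc E_{\tilde\phi} = \tilde\phi_*\iota_{\hat e}c(\Gamma_1) - \iota_{\hat w}c(\Gamma_2) + \sum_{\sgn_{\tilde\phi}(\ell)=-1} h_{\tilde\phi(\ell)}$ is assembled by elementary operations: the Chern classes $c(\Gamma_i)$ are read directly off the orientations, each $\iota$ is evaluated by pairing the $O(|E|)$ chips of a degree-$(g-1)$ divisor and joining them by paths found via BFS (as in \cref{special bijections}), and $\tilde\phi_*$ is the signed permutation from step (i). All intermediate coordinates are integer vectors that are sums over at most $|E|$ edges, so their bit-length stays polynomially bounded. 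Pulling back by $\iota_{\hat w}\inv$ to a degree-zero divisor, the test $\mc E_{\tilde\phi}\sim 0$ becomes membership in $\im\Delta$, which is decided in polynomial time either by an integer solve against a Smith normal form of the Laplacian or by comparing $q$-reduced representatives via Dhar's burning algorithm (\cite{baker_chip-firing_2013}, \cite{klivans_mathematics_2018}).

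Finally, for (iv) the inductive traversal in the proof of \cref{geometric rigidity} is itself an algorithm: grow a connected subgraph of $G$ one edge at a time, at each step computing $\tilde\phi_*(P_p)$ and using the already-available linear-equivalence test together with \cref{series ambiguity} to identify the unique series class into which $\psi\tilde\phi$ must send the new edge, recording the resulting permutation. This is one BFS with a constant number of polynomial-time lookups per edge. I expect the main obstacle to be bookkeeping rather than any new idea: one must verify that no intermediate object (the divisors, the $H^1$-coordinates, and especially the repeated linear-equivalence queries in steps (iii) and (iv)) suffers a blow-up in bit-size, and that the equivalence test is genuinely polynomial — a point that rests on invoking the known polynomiality of reduced divisors and Smith normal form rather than reproving it. The hypothesis that $e$ is not a series edge is precisely what keeps the outer loop at a single iteration; in the series case the same machinery would be run $|\varepsilon(w)|\le|E|$ times, which is still polynomial but would require separately arguing that choosing the bijection within each series class does not force a search.
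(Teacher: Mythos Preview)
The paper does not prove this statement: it is explicitly labeled a \emph{Conjecture} and is left open, with only the remark ``It would be interesting to see a careful analysis of the computational complexity of this process.'' There is therefore no proof in the paper to compare your attempt against.

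That said, your outline is a sensible plan of attack rather than a finished proof, and you seem aware of this. The crucial reduction --- that a cyclic bijection preserves series classes, so $|\varepsilon(w)|=|\varepsilon(e)|=1$ and the outer loop over the $\tau_i$ collapses to a single rigidity test $\mc E_{\tilde\phi}\sim 0$ --- is correct and is exactly why the author isolates the non-series-edge hypothesis. Your breakdown into (i)--(iv) is also the natural one, and the appeals to Dhar's algorithm / Smith normal form for the linear-equivalence test and to the constructive proof of \cref{geometric rigidity} for the reconstruction are appropriate.

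Where a referee would push back is precisely where you hedge. In step~(i) you propose propagating signs across overlapping fundamental cycles of a spanning tree; for this to terminate without auxiliary cycles you need the overlap graph of fundamental cycles to be connected, which you have not argued (it is true under the standing $2$-connectedness hypothesis, but it requires a sentence). In step~(iv), identifying the vertex $r$ with $P_r\equiv\tilde\phi_*(P_p)$ is not a single lookup but a search over $V(H)$, each candidate requiring a linear-equivalence test; this is still polynomial, but the cost should be stated. Finally, your bit-size remarks are asserted rather than checked: the entries of $\iota$-images and of the Smith normal form of $\Delta$ do stay polynomially bounded, but a complete proof would cite or verify this. None of these are new ideas --- as you say, they are bookkeeping --- but until they are written out the statement remains a conjecture.
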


\subsection{Fibers of the compactified Torelli map} \label{fibers of the compactified Torelli map}
We will now briefly indicate the context in which our objects arise in algebraic geometry - this is a direction of intended future research for the author. 

Let $A_g$ denote the moduli scheme of principally polarized Abelian varieties of dimension $g$, and $M_g$ denote the moduli scheme of smooth projective curves of genus $g$. The \textit{Torelli map}, denoted $\tau: M_g \to A_g$, is induced by the assignment of curves to their Jacobians. 

It is known that $M_g$ admits a compactification $\overline{M}_g$, the moduli scheme of Deligne-Mumford stable curves. Alexeev constructed and examined a compactification $\overline{A}^{\text{mod}}_g$ of $A_g$, which parameterizes \textit{principally polarized semi-abelic stable pairs} \cite{alexeev_compactified_2004} \cite{alexeev_complete_2004}. Graphs enter naturally into the study of $\overline{M}_g$, because points on the boundary can be described as \textit{stable curves}. Speaking nonrigorously, a stable curve $X$ is a nicely glued collections of irreducible curves $C_1, \dots, C_n$. The $C_i$ can be taken to be vertices of a graph $\Gamma_X$, with edges corresponding to points of intersection. This is referred to as the \textit{dual graph} of the stable curve. Unlike the graphs we have considered above, these may have loops, may have genus one, and may be $1$-connective or $1$-edge connective. To simplify the discussion, we will ignore the existence of loops and graphs of genus one.

We compactify $\tau$, yielding the following commutative diagram.

\[\begin{tikzcd}
	{\overline{M}_g} && {\overline{A}_g^{\text{mod}}} \\
	\\
	{M_g} && {A_g}
	\arrow["\tau", from=3-1, to=3-3]
	\arrow[hook, from=3-1, to=1-1]
	\arrow[hook, from=3-3, to=1-3]
	\arrow["{\overline \tau}", from=1-1, to=1-3]
\end{tikzcd}\]

A natural question to ask is if the fibers of $\overline \tau$ can be characterized combinatorially. Caporaso and Viviani proved in \cite{caporaso_torelli_stable_2010} that the fibers can be described as follows. First, $\tau\inv(A_g)$ consists of $M_g$ and all $X$ such that $\Gamma_X$ is a tree. Second, if $X$ and $X'$ are stable curves such that $\Gamma_X$ and $\Gamma_{X'}$ are $2$-edge connected, then $\overline \tau(X) = \overline \tau(X')$ if and only if $\Gamma_X$ and $\Gamma_{X'}$ are strongly cyclically equivalent (we will define this momentarily). The case in which $\Gamma_X$ is not a tree but has a separating edge is slightly more complex; one removes the separating edge, performs a `stabilization' procedure and arrives at a variant of the above case.

The definition of strong cyclic equivalence is as follows: pick any two edges $e, e'$ in $\Gamma_X$ which are in series. Then $\Gamma_X \bs \{e, e'\}$ has two components $Y_1, Y_2$, each of which is an arch in $\Gamma_X$. They are thus both candidates for Whitney moves. Let us call a Whitney move obtained in such a way \textit{admissible}. Then we deem $\Gamma_X$ and $\Gamma_{X'}$ to be \textit{strongly cyclically equivalent} if there is a cyclic bijection between them, such that in the classical result \cref{whitney 2 iso}, the Whitney moves can all be taken to be admissible. It is easy to see that admissible Whitney moves are sufficient to permute edges within their series class as we like. It is also easy to produce an example of a nonrigid admissible Whitney move. 

If $X$, $X'$ are points in the same fiber of the Torelli map, then choosing an isomorphism of their associated principally polarized semi-abelic stable pairs is almost sufficient to specify a (strong) cyclic bijection between them. The only ambiguity arises from what are, in our terminology, series fixing automorphisms. Since rigidity is independent of such automorphisms, it is independent of this last choice. The author suspects that with care, examining how rigidity varies given choices of isomorphisms of principally polarized semi-abelic stable pairs will shed new light on the structure of the fibers of the Torelli map. 

\section{SageMath software tools} \label{software}
The author has developed software tools within SageMath \cite{sagemath}, located at \url{https://github.com/SeaGriff/Graph-Orientations-and-Divisors}, which implement much of the material described both here and in \cite{backman_riemann-roch_2017}. The code is designed to facilitate passing data between the existing SageMath implementations of the discrete Picard group and matroids, and implementations of $\mathbb O(G)$ and morphisms in $\orCyc$. The useful functions include:
\begin{enumerate}
    \item Taking Chern classes of orientations.
    \item Lifting divisors to partial orientations, or certifying this cannot be done, as in \cite{backman_riemann-roch_2017}.
    \item The unfurling algorithm of \cite{backman_riemann-roch_2017}.
    \item The modified unfurling algorithm of \cite{backman_riemann-roch_2017}.
    \item Oriented Dhar's algorithm, as in \cite{backman_riemann-roch_2017}.
    \item The computation of signs $\sgn_\vphi(\ell)$ of morphisms in $\orCyc$.
    \item Computation of rigidity divisors of morphisms in $\orCyc$.
    \item Construction of edge isomorphisms from rigid morphisms in $\orCyc$.
\end{enumerate}
The author has been unable to locate prior implementations of the generalized cycle-cocycle system or the above functions.

\section{Acknowledgements}
This material is based upon work supported by the National Science Foundation under Grant No. 1701659.

The author wishes to thank David Jensen for many helpful comments on the presentation and organization of the paper.

\nocite{Arkor_quiver_2021}

\printbibliography

\end{document}